\newcommand{\plim}{\varprojlim}
\newcommand{\mcal}{\mathcal}
\newcommand{\mbf}{\mathbf}
\newcommand{\mbb}{\mathbb}
\newcommand{\mrm}{\mathrm}
\newcommand{\vphi}{\varphi}
\newtheorem{theorem}{Theorem}[section]
\newtheorem{corollary}[theorem]{Corollary}
\newtheorem{lemma}[theorem]{Lemma}
\newtheorem{proposition}[theorem]{Proposition}
\theoremstyle{definition}
\newtheorem{remark}[theorem]{Remark}
\newtheorem*{acknowledgments}{Acknowledgments}
\title{Torsion of elliptic curves over $\mathbb{Q}_p$
with good reduction in cyclotomic extensions} 
\author{Yoshiyasu Ozeki\footnote{
Faculty of Science, Kanagawa University,
3-27-1 Rokkakubashi, Kanagawa-ku, Yokohama-shi, Kanagawa 221-8686, JAPAN
\endgraf
e-mail: {\tt ozeki@kanagawa-u.ac.jp}
}
and Manabu Yoshida\footnote{
Faculty of Mathematics Education, Osaka Kyoiku University, 
4-698-1 Asahigaoka, Kashiwara, Osaka 582-8582, JAPAN
\endgraf
email: {\tt yoshida-m95@cc.osaka-kyoiku.ac.jp}
\endgraf
Keywords: elliptic curves, $p$-adic fields, cyclotomic extensions
\endgraf
AMS 2020 Mathematics subject classification: 
11G07 (primary), 14G20 (secondary) 
}}
\begin{document}
\maketitle

\begin{abstract}
In this paper, for every prime $p$ and every $0\le n\le \infty$, 
we classify the structure of the torsion subgroup of the group of $\mbb{Q}_p(\mu_{p^n})$-rational points of elliptic curves 
over $\mbb{Q}_p$ with good reduction, 
where $\mu_{p^n}$ is the set of the $p^n$-th roots of unity. 

\if0
The classification of the groups that arise as torsion subgroups
of $E(\mathbb{Q})$, where $E$ is an elliptic curve over $\mathbb{Q}$,
is well known as Mazur's pioneering theorem in 1978.
Furthermore, the classification of torsion subgroups of
$E(\mathbb{Q}^{\mathrm{ab}})$, where $\mathbb{Q}^{\mathrm{ab}}$
denotes the maximal abelian extension of $\mathbb{Q}$, was obtained
by M. Chou in 2019. Motivated by Choufs result,
Guzvic-Vukorepa gave the classification of torsion subgroups
of $E(\mathbb{Q}(\mu_{p^n}))$ in 2023, where $p$ is a prime
$\leq 11$, $0 \leq n \leq \infty$, and $\mu_{p^n}$ denotes the group of $p^n$-th roots of unity.
In this talk, we give a $p$-adic analogue of these results.
More precisely, for every prime $p$ and every $0 \leq n \leq \infty$,
we classify the possible torsion subgroups of
$E(\mathbb{Q}_p(\mu_{p^n}))$, where $E$ is an elliptic curve
over $\mathbb{Q}_p$ with good reduction.
This is joint work with Manabu Yoshida.
\fi

\end{abstract}

    \tableofcontents


\section{Introduction}

\if0
\textcolor{red}{
Elliptic curves have long been a central object of study in number theory. 
They arise naturally in diverse contexts, 
ranging from Diophantine equations to modern cryptography, 
and their rich structure has motivated a vast body of research. 
A landmark theorem concerning the structure of the torsion subgroup is Mazurfs theorem,
which provides a complete classification of the possible torsion subgroups of elliptic curves
defined over the rational numbers.
Specifically, Mazur's theorem states that if $E$ is an elliptic curve over $\mbb{Q}$, 
then its torsion subgroup $E(\mbb{Q})_{\mrm{tor}}$ is isomorphic 
to one of the following 15 groups 
\[ \mbb{Z}/n \mbb{Z} \quad (n=1,2,\dots,10,12), \]
\[\mbb{Z}/2 \mbb{Z} \times \mbb{Z}/2m \mbb{Z} \quad (m=1,2,3,4). \]
This paper begins by establishing an analogue of Mazur's theorem over 
$\mbb{Q}_p$ 
in the case of good reduction. 
More precisely, for an elliptic curve $E$ over $\mbb{Q}_p$ with good reduction, 
we determine its torsion subgroup $E(\mbb{Q}_p)_{\mrm{tor}}$. }
\fi

\if0
It is well-known as the Mordell-Weil theorem that the group of
$K$-rational points $E(K)$ on an elliptic curve $E$ 
over a number field $K$ is
finitely generated.
In particular, the torsion subgroup $E(K)_{\mrm{tor}}$ of $E(K)$
is finite. 
In 1996, 
Merel \cite{Mel96} proved that there exists 
an upper bound on the size of $E(K)_{\mrm{tor}}$ which 
depends only on the degree of $K$.
Thus, for a fixed integer $d>0$, there exist only finitely many possibilities 
(up to isomorphism) 
for the groups  $E(K)_{\mrm{tor}}$
where $K$ ranges over a number field of degree $d$ 
and $E$ ranges elliptic curves over $K$.
To give a classification of such groups for 
given $d$ is one of the crucial problems 
for arithmetic theory of elliptic curves. 
A landmark theorem concerning this problem is 
a theorem of Mazur \cite{Maz78},
which studies the case $d=1$ (i.e.\ $K=\mbb{Q}$); 
he showed that if $E$ is an elliptic curve over $\mbb{Q}$, 
then its torsion subgroup $E(\mbb{Q})_{\mrm{tor}}$ is isomorphic 
to one of the following 15 groups 
\[ \mbb{Z}/n \mbb{Z} \quad (n=1,2,\dots,10,12), \]
\[\mbb{Z}/2 \mbb{Z} \times \mbb{Z}/2m \mbb{Z} \quad (m=1,2,3,4). \]
After Mazur's paper, Kammieny \cite{Kam92} and Kenku and Momose \cite{KeMo88}
gave an answer of the classification problem for $d=2$,
and the classification for $d=3$ was given 
by Derickx, Etropolski, Morrow, van Hoeij, and Zureick-Brown
\cite{DEHMZB21}.
\fi

Let $p$ be a prime,
$K$ a finite extension of $\mbb{Q}_p$ 
and $A$ an abelian variety over $K$.
Let $L$ be an algebraic extension of $K$.
It is a theorem of Mattuck \cite{Mat55} that, if 
$L$ is a finite extension of $K$, 
then the Mordell-Weil group 
$A(L)$ is isomorphic to the direct sum of $\mbb{Z}_p^{\oplus \dim A\cdot [L:\mbb{Q}_p]}$ 
and a finite group.  
Furthermore, Clark and Xarles \cite{ClXa08} gave an explicit upper bound for  
the order of the torsion subgroup $A(L)_{\mrm{tor}}$ of $A(L)$
in terms of $p, \dim A$ and some numerical invariants of  $L$
when $A$ has anisotropic reduction (which includes the case of good reduction).
In the case where $L$ is of infinite degree,
Imai \cite{Ima75} showed that $A(K(\mu_{p^{\infty}}))_{\mrm{tor}}$ is finite
if $A$ has potentially good reduction.
Here,  $K(\mu_{p^{\infty}})$ denotes the extension field of 
$K$ obtained by adjoining   all $p$-power roots of unity.
The first author \cite{Oze24-2} studied an explicit order of $A(K(\mu_{p^{\infty}}))_{\mrm{tor}}$
when $A$ has complex multiplication.

The aim of this paper is to determine the possible group structure
of $E(\mbb{Q}_p)_{\mrm{tor}}$ and $E(\mbb{Q}_p(\mu_{p^{\infty}}))_{\mrm{tor}}$
for elliptic curves $E$ over $\mbb{Q}_p$ with good reduction.
Let us introduce some notation needed for our results.
We denote by $I$ the set of pairs $(m,k)$ of positive integers 
such that $m\mid p-1$ and 
$(\sqrt{p}-1)^2<km^2<(\sqrt{p}+1)^2$.
We also denote by $I_{\mrm{ord}}$ 
the subset of $I$ consisting of elements $(m,k)$
such that $km^2\not\equiv 1 \ \mrm{mod}\ p$.
Our first main result in this paper is as follows.
\begin{theorem} 
\label{E(Qp):good}
Let $E$ be an elliptic curve over $\mbb{Q}_p$
with good reduction.
\begin{itemize}
\item[{\rm (1)}]
Assume $p\ge 3$. 
Then, $E(\mbb{Q}_p)_{\mrm{tor}}$
is isomorphic to one of the following groups.
$$
\left\{
\begin{array}{ll}
 \mbb{Z}/m\mbb{Z}\times \mbb{Z}/mk\mbb{Z}, 
& (m,k)\in I,  \\
 0.
& 
\end{array}
\right.
$$
Each of these groups appears as $E(\mbb{Q}_p)_{\mrm{tor}}$ for 
some elliptic curve $E$ over $\mbb{Q}_p$ with good reduction.

Moreover, if $E$ has good ordinary reduction (resp.\ good supersigular reduciton), 
then $E(\mbb{Q}_p)_{\mrm{tor}}$
is isomorphic to one of the following groups in (I) (resp.\ (II)).
\begin{align*}
(I)&\left\{
\begin{array}{ll}
 \mbb{Z}/m\mbb{Z}\times \mbb{Z}/mk\mbb{Z}, 
& (m,k)\in I_{\mrm{ord}},  \\
 0.
& 
\end{array}
\right. \\
(II)&\ 
\left\{
\begin{array}{ll}
0, \mathbb{Z}/4\mathbb{Z}, \mathbb{Z}/7\mathbb{Z}, 
\mathbb{Z}/2\mathbb{Z}\times \mathbb{Z}/2\mathbb{Z}, 
& \mbox{if $p=3$},  \\
\mbb{Z}/(1+p)\mbb{Z},
& \mbox{if $p\equiv 1 \ \mrm{mod}\ 4$} \\ 
\mbb{Z}/(1+p)\mbb{Z}, 
\mbb{Z}/2\mbb{Z}\times \mbb{Z}/\frac{1+p}{2}\mbb{Z} 
& \mbox{if $p\not=3$, $p\equiv 3 \ \mrm{mod}\ 4$}. 
\end{array}
\right.
\end{align*}
Each of these groups in (I) (resp.\ (II)) 
appears as $E(\mbb{Q}_p)_{\mrm{tor}}$ for 
some elliptic curve $E$ over $\mbb{Q}_p$ with good  ordinary reduction
(resp.\ good supersingular reduction).
\item[{\rm (2)}] Assume $p=2$.
Then, $E(\mbb{Q}_2)_{\mrm{tor}}$
is isomorphic to one of the following groups.
$$
\left\{
\begin{array}{ll}
 \mbb{Z}/m\mbb{Z}, 
& m=1, 2,3,4,5,8,  \\
 \mbb{Z}/2\mbb{Z}\times \mbb{Z}/2k\mbb{Z}, 
& k=1,2.
\end{array}
\right.
$$
Each of these groups appears as $E(\mbb{Q}_2)_{\mrm{tor}}$ for 
some elliptic curve $E$ over $\mbb{Q}_2$ with good reduction.

Moreover, if $E$ has good ordinary reduction (resp.\ good supersigular reduciton), 
then $E(\mbb{Q}_2)_{\mrm{tor}}$
is isomorphic to one of the following groups in $(I)'$ (resp.\ $(II)'$).
\begin{align*}
(I)'&\ \left\{
\begin{array}{ll}
 \mbb{Z}/m\mbb{Z}, 
& m=2,4,8  \\
 \mbb{Z}/2\mbb{Z}\times \mbb{Z}/2k\mbb{Z},
& k=1,2\\ 
\end{array}
\right. \\
(II)'&\quad  0, \mbb{Z}/3\mbb{Z}, \mbb{Z}/5\mbb{Z}.
\end{align*}
Each of these groups in $(I)'$ (resp.\ $(II)'$) 
appears as $E(\mbb{Q}_2)_{\mrm{tor}}$ for 
some elliptic curve $E$ over $\mbb{Q}_2$ with good  ordinary reduction
(resp.\ good supersingular reduction).
\end{itemize}
\end{theorem}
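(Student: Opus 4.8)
The plan is to reduce everything to the group structure of the reduction $\tilde{E}(\mbb{F}_p)$ and then read off the constraints. Since $E$ has good reduction, there is a short exact sequence
$$0 \to \hat{E}(p\mbb{Z}_p) \to E(\mbb{Q}_p) \xrightarrow{\ \mrm{red}\ } \tilde{E}(\mbb{F}_p) \to 0,$$
where $\hat{E}$ is the formal group of $E$ and the reduction map is surjective by smoothness. For $p\ge 3$ the formal group $\hat{E}(p\mbb{Z}_p)$ is torsion-free, so $\mrm{red}$ embeds $E(\mbb{Q}_p)_{\mrm{tor}}$ into $\tilde{E}(\mbb{F}_p)$, and on the prime-to-$p$ part this embedding is an isomorphism because the kernel is pro-$p$. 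Over the residue field we have $\tilde{E}(\mbb{F}_p)\cong \mbb{Z}/m\mbb{Z}\times \mbb{Z}/mk\mbb{Z}$, where the Weil pairing forces $\mu_m\subset\mbb{F}_p$, i.e.\ $m\mid p-1$, and the Hasse bound (strict since $p$ is not a square) gives $(\sqrt{p}-1)^2<km^2<(\sqrt{p}+1)^2$. This already produces the set $I$ and the upper bound away from $p$, so the whole problem is concentrated in the $p$-part of the torsion.

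First I would dispose of the supersingular case. Here $\tilde{E}$ has no geometric $p$-torsion, so $E(\mbb{Q}_p)[p]\hookrightarrow \tilde{E}(\mbb{F}_p)[p]=0$, whence $E(\mbb{Q}_p)_{\mrm{tor}}\cong\tilde{E}(\mbb{F}_p)$, of order $p+1$ for $p\ge 5$ and of order in $\{1,4,7\}$ when $p=3$. Combining $m\mid p-1$ with $m^2\mid p+1$ forces $m\mid\gcd(p-1,p+1)\mid 2$, so $m\in\{1,2\}$, with $m=2$ possible exactly when $p\equiv 3\pmod 4$; this pins down the groups listed in (II). In the ordinary case the geometric $p$-torsion is $\mbb{Z}/p\mbb{Z}$, so $\tilde{E}(\mbb{F}_p)[p^\infty]$ is cyclic and $p\nmid m$; but now the embedding $E(\mbb{Q}_p)[p^\infty]\hookrightarrow\tilde{E}(\mbb{F}_p)[p^\infty]$ need not be onto, since a lift of a $p$-torsion point of $\tilde{E}$ need not itself be torsion (its multiple lands in $\hat{E}(p\mbb{Z}_p)\cong\mbb{Z}_p$). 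I would analyze this truncation through the connected–étale sequence of the $p$-divisible group $E[p^\infty]$, identifying $E(\mbb{Q}_p)[p^\infty]$ with the Frobenius-fixed part of the étale quotient. The decisive point is that the Hasse interval is narrow: whenever $p\mid \#\tilde{E}(\mbb{F}_p)$, the order $km^2$ is essentially the unique multiple of $p$ in that interval, forcing the prime-to-$p$ part of $\tilde{E}(\mbb{F}_p)$ to be trivial for $p\ge 7$ and at most $\mbb{Z}/2\mbb{Z}$ for $p=5$. Hence every possible truncation of the cyclic $p$-part still lands in $I_{\mrm{ord}}\cup\{0\}$, and this is precisely why the trivial group $0$ has to be listed as a genuinely separate possibility.

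For realizability I would invoke the classification of groups occurring as $\tilde{E}(\mbb{F}_p)$ for ordinary and supersingular curves (Waterhouse, Deuring, R\"uck, Schoof) to produce a reduction with the prescribed structure, and then lift it to an elliptic curve over $\mbb{Q}_p$ with good reduction. The prime-to-$p$ part of $E(\mbb{Q}_p)_{\mrm{tor}}$ is automatic, so the work is to arrange the $p$-part: I would exhibit explicit curves, or suitable unramified/quadratic twists, realizing on the one hand the full $p$-part (so that $E(\mbb{Q}_p)_{\mrm{tor}}=\tilde{E}(\mbb{F}_p)$) and on the other hand its truncation to $0$, thereby hitting every group on the list.

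I expect the two genuine obstacles to be the fine control of the $p$-part in the ordinary case and the prime $p=2$. The ordinary $p$-part requires making the connecting homomorphism in the connected–étale sequence explicit and checking, case by case in the finitely many configurations left open by the Hasse bound, that no group outside $I_{\mrm{ord}}\cup\{0\}$ can occur and that each one is realized. The case $p=2$ must be handled by a separate argument, because $\hat{E}(2\mbb{Z}_2)$ may contain $2$-torsion, so reduction is no longer injective on $2$-primary torsion; here I would work with explicit Weierstrass models (and the very small Hasse range, $\#\tilde{E}(\mbb{F}_2)\le 5$) to list the possibilities directly and to separate the ordinary and supersingular behaviour as in $(I)'$ and $(II)'$.
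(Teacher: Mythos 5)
Your upper-bound analysis follows essentially the paper's own route: the formal-group exact sequence, the isomorphism on prime-to-$p$ parts, the injection of the $p$-power torsion into $\bar{E}(\mbb{F}_p)[p]$ for $p\ge 3$, and the arithmetic of the Hasse interval; your supersingular argument and your observation that $p\mid \#\bar{E}(\mbb{F}_p)$ forces $\bar{E}(\mbb{F}_p)\simeq G_{1,p}$ or $G_{1,2p}$ are both sound (two slips: the option $G_{1,2p}$ also occurs for $p=3$, not only $p=5$; and your phrase ``identifying $E(\mbb{Q}_p)[p^\infty]$ with the Frobenius-fixed part of the \'etale quotient'' is false as an identification --- there is only an injection, which is exactly the truncation phenomenon you yourself describe in the next sentence).

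The genuine gap is realizability in the ordinary case when $p$ divides $\#\bar{E}(\mbb{F}_p)$. There one must produce, for \emph{every} odd prime $p$, (i) a lift $E/\mbb{Q}_p$ of $\bar{E}$ whose rational $p$-torsion equals all of $\bar{E}(\mbb{F}_p)[p]$ (realizing $G_{1,p}$ and, for $p\le 5$, $G_{1,2p}$), and (ii) a lift with $E(\mbb{Q}_p)[p]=0$ (realizing $0$; note $0$ cannot occur as $\bar{E}(\mbb{F}_p)$ for an ordinary curve, since $\#\bar{E}(\mbb{F}_p)=1$ forces $a_p=p\equiv 0$, so $0$ arises only by truncation). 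Your proposed tools cannot deliver this: ``explicit curves'' cannot cover infinitely many primes uniformly, and quadratic twists do not help, because a ramified twist destroys good reduction while an unramified twist replaces $a_p$ by $-a_p$ and so changes the reduction --- neither produces two inequivalent lifts of the \emph{same} $\bar{E}$. What is missing is a splitting mechanism and a non-splitting mechanism, uniform in $p$, for the extension $0\to\hat{E}[p]\to E[p]\to\bar{E}[p]\to 0$ of Galois modules. The paper supplies (i) via CM theory --- the canonical lift $E_0$ has abelian mod $p$ image, forcing $E_0[p]\simeq\mbb{F}_p(1)\oplus\mbb{F}_p$, hence $E_0(\mbb{Q}_p)[p]\not=0$ --- and (ii) via the lemmas of David--Weston on lifts to $W(\mbb{F}_p)/p^2W(\mbb{F}_p)$ with no rational $p$-torsion. (For $p=2$ your plan of structural bounds plus explicit models matches the paper's MAGMA verification in spirit, but to make the candidate list finite you still need the two facts the paper uses: $\hat{E}(\mbb{Q}_2)_{\mrm{tor}}\simeq\mbb{Z}/2\mbb{Z}$ exactly in the ordinary case, and irreducibility of $E[2]$ in the supersingular case.)
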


Before stating our second result, 
it should be better to mention some known results 
on the group structures of elliptic curves over 
(infinite degree) abelian extensions of $\mbb{Q}$.
Chou \cite{Cho19} determined the possible torsion subgroups of $E(\mbb{Q}^{\mrm{ab}})$ 
for an elliptic curve $E$ over $\mbb{Q}$ 
and established the sharp bound $\# E(\mbb{Q}^{\mrm{ab}}) \leq 163$. 
Building on Chou's results, Gu\v{z}vi\'c and Vukorepa \cite{GuVu23} 
classified all possible torsion subgroups 
of $E(\mbb{Q}(\mu_{p^{\infty}}))$ in the case $p=2,3,5,7$ and $11$. 
where $\mu_{p^n}$ is the group of $p^n$-th roots of unity. 
We consider a $p$-adic analogue of these results. 
Let $E$ be an elliptic curve over $\mathbb{Q}_p$ with good reduction. 
As an analogue of Chou's result, it is natural to study the group structure of $E(\mathbb{Q}_p^{\mathrm{ab}})_{\mrm{tor}}$, 
where $\mbb{Q}_p^{\mrm{ab}}$ denotes the maximal abelian extensions of $\mbb{Q}_p$.
However, we immediately see that this group is always infinite 
(indeed, the reduction map induces an isomorphism between 
the prime-to-$p$ parts of $E(\mbb{Q}_p^{\mrm{ab}})_{\mrm{tor}}$
and that of $\bar{E}(\overline{\mbb{F}}_p)$, 
where $\bar{E}$ denotes the reduction of $E$ 
and $\overline{\mbb{F}}_p$ is the separable closure of $\mbb{F}_p$. 
So we study the torsion subgroup of $E(\mathbb{Q}_p(\mu_{p^{\infty}}))$,
which may be regarded as a $p$-adic analogue of the work of Gu\v{z}vi\'{c} and Vukorepa. 
The second main theorem below forms the central part of this paper. 

\begin{theorem} 
\label{E(Qp(mu)):good}
Let $E$ be an elliptic curve over $\mbb{Q}_p$.
\begin{itemize}
\item[{\rm (1)}]  If $E$ has good supersingular reduction, 
then it holds $E(\mbb{Q}_p(\mu_{p^{\infty}}))_{\mrm{tor}}
=E(\mbb{Q}_p)_{\mrm{tor}}$. 
(Thus the possible group structures of this group are given in Theorem 
\ref{E(Qp):good}.)
\item[{\rm (2)}] Assume $p\ge 3$
and assume also that $E$ has good ordinary reduction.
Then,  
\begin{itemize}
\item it holds 
$E(\mbb{Q}_p(\mu_{p^{\infty}}))_{\mrm{tor}}
=E(\mbb{Q}_p(\mu_p))_{\mrm{tor}}$. 
\item Moreover, 
$E(\mbb{Q}_p(\mu_{p^{\infty}}))_{\mrm{tor}}$ is isomorphic to one of the following groups in $(I)_{\infty}$.
\begin{align*}
(I)_{\infty}&\ \left\{
\begin{array}{ll}
 \mbb{Z}/m\mbb{Z}\times \mbb{Z}/mk\mbb{Z}, 
& (m,k)\in I_{\mathrm{ord}},  \\
 \mbb{Z}/p\mbb{Z}\times \mbb{Z}/p\mbb{Z},
& \\ 
 \mbb{Z}/p\mbb{Z}\times \mbb{Z}/2p\mbb{Z} 
& \mbox{if $p\le 5$}. 
\end{array}
\right.
\end{align*}
Each of these groups in $(I)_{\infty}$  
appears as $E(\mbb{Q}_p(\mu_{p^{\infty}}))_{\mrm{tor}}$ for 
some elliptic curve $E$ over $\mbb{Q}_p$ with good  ordinary reduction.
\end{itemize}
\item[{\rm (3)}]  Assume $p=2$ 
and assume also that $E$ has good ordinary reduction. 
Then, 
\begin{itemize}
\item it holds 
$E(\mbb{Q}_2(\mu_{2^{\infty}}))_{\mrm{tor}}
=E(\mbb{Q}_2(\mu_{8}))_{\mrm{tor}}$. 
\item Moreover, $E(\mbb{Q}_2(\mu_{2^{\infty}}))_{\mrm{tor}}$
is isomorphic to one of the following groups in $(I)'_{\infty}$.
\begin{align*}
(I)'_{\infty} &\ \left\{
\begin{array}{ll}
 \mbb{Z}/m\mbb{Z}, 
& m=4,8  \\
 \mbb{Z}/2\mbb{Z}\times \mbb{Z}/2k\mbb{Z},
& k=1,4\\ 
 \mbb{Z}/4\mbb{Z}\times \mbb{Z}/4\mbb{Z}. 
&  
\end{array}
\right.
\end{align*}
Each of these groups in $(I)'_{\infty}$  
appears as $E(\mbb{Q}_2(\mu_{2^{\infty}}))_{\mrm{tor}}$ for 
some elliptic curve $E$ over $\mbb{Q}_2$ with good  ordinary reduction.
\end{itemize}
\end{itemize}
\end{theorem}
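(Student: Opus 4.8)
The plan is to split the torsion into its prime-to-$p$ part and its $p$-primary part, and to treat the latter through the connected--étale sequence of the $p$-divisible group of $E$. Since $\mbb{Q}_p(\mu_{p^\infty})/\mbb{Q}_p$ is totally ramified, its residue field is still $\mbb{F}_p$ and the kernel of reduction is pro-$p$; hence reduction identifies the prime-to-$p$ torsion of $E$ over $\mbb{Q}_p(\mu_{p^\infty})$ with the prime-to-$p$ part of $\bar{E}(\mbb{F}_p)$, exactly as over $\mbb{Q}_p$. This part is therefore unchanged and is already described by Theorem \ref{E(Qp):good}, so everything reduces to computing $E[p^\infty](\mbb{Q}_p(\mu_{p^\infty}))$, i.e. the $H$-invariants of $E[p^\infty]$ for $H=\mrm{Gal}(\overline{\mbb{Q}}_p/\mbb{Q}_p(\mu_{p^\infty}))=\ker\chi$, where $\chi$ denotes the cyclotomic character.

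For part (1) I would show that the supersingular case has no nontrivial $p$-power torsion at all over $\mbb{Q}_p(\mu_{p^\infty})$. The reduction contributes nothing, since $\bar{E}(\overline{\mbb{F}}_p)[p]=0$, so every $p$-torsion point lies on the height-$2$ formal group $\hat{E}$; by the Newton polygon of $[p]$ such a point has valuation $1/(p^2-1)$, which does not lie in the value group $\tfrac{1}{p-1}\mbb{Z}[1/p]$ of $\mbb{Q}_p(\mu_{p^\infty})$ (as $p+1$ divides no power of $p$). Hence $E[p](\mbb{Q}_p(\mu_{p^\infty}))=0$, which forces $E[p^\infty](\mbb{Q}_p(\mu_{p^\infty}))=0=E[p^\infty](\mbb{Q}_p)$, and the claim follows together with the prime-to-$p$ part.

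For the ordinary case the key input is the filtration $0\to\hat{E}[p^\infty]\to E[p^\infty]\to\bar{E}[p^\infty]\to0$, on which the absolute Galois group of $\mbb{Q}_p$ acts through $\chi\lambda^{-1}$ on the formal part and through the unramified character $\lambda$ with $\lambda(\mrm{Frob})=\alpha$, the unit root of $X^2-a_pX+p$, on the étale part. Restricting to $H_n=\mrm{Gal}(\overline{\mbb{Q}}_p/\mbb{Q}_p(\mu_{p^n}))$, the étale invariants form the $n$-independent group $\mbb{Z}/p^{v_p(\alpha-1)}\cong\bar{E}(\mbb{F}_p)[p^\infty]$, while a direct valuation count on $\chi\lambda^{-1}-1$ gives formal invariants $\mbb{Z}/p^{\min(n,\,v_p(\alpha-1))}$. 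The decisive numerical fact is $v_p(\alpha-1)=v_p(\#\bar{E}(\mbb{F}_p))$ together with the Hasse bound $\#\bar{E}(\mbb{F}_p)\le(\sqrt p+1)^2<p^2$, forcing $v_p(\alpha-1)\le1$ for $p\ge3$ (and $\le2$ for $p=2$). Thus both graded pieces are already attained at $n=1$; using in addition that $E[p^\infty](\mbb{Q}_p(\mu_{p^\infty}))$ is finite (so the increasing chain $E[p^\infty]^{H_n}$ terminates), I would deduce stabilisation of the whole invariant group at $\mu_p$ for odd $p$, and at the level where the $2$-adic cyclotomic character has stabilised, namely $\mu_8$, for $p=2$.

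The exact group structures then come from analysing the extension $0\to\hat{E}[p^\infty]^{H}\to E[p^\infty]^{H}\to\bar{E}[p^\infty]^{H}$: the associated cocycle/connecting data, equivalently whether the full $E[p]$ becomes rational (which by the Weil pairing requires $\mu_p\subset\mbb{Q}_p(\mu_{p^\infty})$, and is thus possible), determines whether one obtains $\mbb{Z}/p\times\mbb{Z}/p$, a larger cyclic factor, or only the $E(\mbb{Q}_p)$-part $\mbb{Z}/m\times\mbb{Z}/mk$. The extra case $\mbb{Z}/p\times\mbb{Z}/2p$ for $p\le5$ arises precisely because realising the full $p$-torsion together with a rational $2$-torsion point requires $2p\mid\#\bar{E}(\mbb{F}_p)$, i.e. $2p\le(\sqrt p+1)^2$, which holds only for $p\le5$; the realisation statements follow by exhibiting explicit curves with the prescribed $a_p$ and reduction. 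I expect the hardest part to be the ordinary case: controlling the extension class in the connected--étale sequence to pin down the precise structure rather than merely the orders of the graded pieces, and carrying out the $p=2$ analysis, where $\mbb{Z}_2^\times$ carries a $\{\pm1\}$-factor, $v_2(\alpha-1)$ may equal $2$, and a non-split extension can produce genuinely larger cyclic groups such as $\mbb{Z}/8$, $\mbb{Z}/2\times\mbb{Z}/8$ and $\mbb{Z}/4\times\mbb{Z}/4$, so that stabilisation is reached only at $\mu_8$.
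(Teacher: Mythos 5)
Your part (1) is correct and proceeds by a genuinely different route from the paper: the paper quotes Serre's theorem that $E[p]$ is irreducible and that $\mbb{Q}_p(E[p])/\mbb{Q}_p$ has ramification index $p^2-1$, while you read off from the Newton polygon of $[p]$ on the height-two formal group that every nonzero $p$-torsion point has valuation $1/(p^2-1)$, which does not lie in the value group $\tfrac{1}{p-1}\mbb{Z}[1/p]$ of $\mbb{Q}_p(\mu_{p^{\infty}})$; both arguments are complete. Your framework for the ordinary case (prime-to-$p$ part via reduction, $p$-part via the connected--\'etale sequence, Lubin--Tate and unramified characters, and the identity $v_p(\alpha-1)=v_p(\#\bar{E}(\mbb{F}_p))\le 1$ for odd $p$) is essentially the skeleton of the paper's Lemma \ref{lem:red:def} and Lemma \ref{p-bound}.

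The ordinary case, however, has a genuine gap at its central step. From ``both graded pieces are attained at $n=1$'' plus finiteness you conclude that $E[p^{\infty}]^{H_n}$ stabilises at $n=1$; this does not follow. The invariant sequence $0\to\hat{E}[p^{\infty}]^{H_n}\to E[p^{\infty}]^{H_n}\to\bar{E}[p^{\infty}]^{H_n}$ is only left exact, and the image of the right-hand map can grow with $n$ while both outer groups stay constant; finiteness only makes the chain terminate \emph{eventually}. That this loss is real is visible at $p=2$: your deduction scheme applied verbatim there gives stabilisation at $\mu_4$ (both graded pieces are constant from $n=2$ on, since $v_2(\alpha-1)\le 2$), yet by Theorem \ref{E(Q2(mu4)):good} and Tables \ref{table:cyclo:ord} and \ref{table:cyclo:p=2} the curve 15a2 has $E(\mbb{Q}_2(\mu_4))_{\mrm{tor}}\simeq\mbb{Z}/2\mbb{Z}\times\mbb{Z}/4\mbb{Z}$ but $E(\mbb{Q}_2(\mu_{2^{\infty}}))_{\mrm{tor}}\simeq\mbb{Z}/2\mbb{Z}\times\mbb{Z}/8\mbb{Z}$. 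What saves odd $p$ is a cohomological input absent from your sketch: when $\bar{E}(\mbb{F}_p)[p]\neq 0$ and $E(\mbb{Q}_p)[p]=0$, the extension $0\to\mbb{F}_p(1)\to E[p]\to\mbb{F}_p\to 0$ must remain non-split over every $\mbb{Q}_p(\mu_{p^n})$, and this holds because the kernel $H^1(\mrm{Gal}(\mbb{Q}_p(\mu_{p^n})/\mbb{Q}_p),\mbb{F}_p(1))$ of the restriction map vanishes for odd $p$ (a cyclic group acting through a nontrivial character) and fails exactly at $p=2$; this is the heart of the paper's Proposition \ref{Main:cycl}. Likewise you flag, but do not rule out, the cyclic possibility $\mbb{Z}/p^2\mbb{Z}$; the paper excludes it via Lemma \ref{Z/p2Z}, since a cyclic invariant group of order $p^2$ would force $\hat{E}[p]\simeq\bar{E}(\mbb{F}_p)[p]$, i.e.\ $\mbb{F}_p(1)\simeq\mbb{F}_p$, as $G_{\mbb{Q}_p}$-modules. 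The $p=2$ assertions (stabilisation exactly at $\mu_8$, exclusion of $\mbb{Z}/2\mbb{Z}$ and of $\mbb{Z}/2\mbb{Z}\times\mbb{Z}/4\mbb{Z}$) are stated as expectations, not proved: the paper needs Fontaine's ramification bound \eqref{Fontaine:bound}, the lists of quadratic and quartic $2$-adic fields, a matrix-level degree count, and the intersection argument $\mbb{Q}_2(E[8])\cap\mbb{Q}_2(\mu_{2^{\infty}})=\mbb{Q}_2(\mu_8)$. Finally, realisation of all groups in $(I)_{\infty}$ is not obtained from ``prescribed $a_p$ and reduction'' alone: whether one gets $\mbb{Z}/p\mbb{Z}\times\mbb{Z}/p\mbb{Z}$ or the cyclic group of order $p$ from the same $\bar{E}$ depends on the choice of lift (the canonical lift has $E(\mbb{Q}_p)[p]\neq 0$, the David--Weston lifts have $E(\mbb{Q}_p)[p]=0$), which is precisely why the paper invokes canonical lifts and the lemmas of David and Weston.
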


As a consequence, we obtain explicit upper bounds 
for $E(\mbb{Q}_p)_{\mrm{tor}}$ and 
$E(\mbb{Q}_p(\mu_{p^{\infty}}))_{\mrm{tor}}$ when $E$ is an elliptic curve over $\mbb{Q}_p$ 
with good ordinary reduction (resp. good supersingular reduction) 
as stated in (I) (resp. (II)) below. 
All these bounds are sharp, 
where $\lfloor x \rfloor$ denotes the greatest integer less than or equal 
to $x$: 
\begin{align*}
(I) \hspace{5pt} \#E(\mbb{Q}_p)_{\mrm{tor}} &\leq \left\{
\begin{array}{ll}
\lfloor (\sqrt{p}+1)^2 \rfloor
& (p \geq 5),  \\
 6
& (p=3), \\
8 & (p=2).
\end{array}
\right.   
\#E(\mbb{Q}_p(\mu_{p^{\infty}}))_{\mrm{tor}} \leq \ \left\{
\begin{array}{ll}
 p^2 
& (p \geq 7),  \\
 2p^2
& (p=3,5), \\
16 & (p=2).
\end{array}
\right.
 \\
(II) \hspace{5pt} \#E(\mbb{Q}_p)_{\mrm{tor}}&= \#E(\mbb{Q}_p(\mu_{p^{\infty}}))_{\mrm{tor}} \leq 
\ \left\{ 
\begin{array}{ll}
1+p & (p \geq 5), \\
7 & (p=3),\\
5 & (p=2).
\end{array}
\right.
\end{align*}

Note that Theorem \ref{E(Qp):good} and Theorem \ref{E(Qp(mu)):good} above give the classifications 
of the possible group structures of 
$E(\mbb{Q}_p(\mu_{p^n}))_{\mrm{tor}}$ 
for all primes $p$ and $0\le n\le \infty$ 
except the case where $(p,n)=(2,2)$ and $E$ has good ordinary reduction. 
The classification result on the exceptional case 
is as follows.

\begin{theorem} 
\label{E(Q2(mu4)):good}
Let $E$ be an elliptic curve over $\mbb{Q}_2$
with good ordinary reduction. 
Then, $E(\mbb{Q}_2(\mu_{4}))_{\mrm{tor}}$
is isomorphic to one of the following groups in $(I)'_2$ .
\begin{align*}
(I)'_2 &\ \left\{
\begin{array}{ll}
 \mbb{Z}/4\mbb{Z}, 
& \\
 \mbb{Z}/2\mbb{Z}\times \mbb{Z}/2k\mbb{Z},
& k=1,2,4\\ 
 \mbb{Z}/4\mbb{Z}\times \mbb{Z}/4\mbb{Z}. 
&  
\end{array}
\right.
\end{align*}
Each of these groups in $(I)'_2$ 
appears as $E(\mbb{Q}_2(\mu_{4}))_{\mrm{tor}}$ for 
some elliptic curve $E$ over $\mbb{Q}_2$ with good  ordinary reduction.
\end{theorem}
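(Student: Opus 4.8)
The plan is to combine the connected--étale sequence of the $2$-divisible group with Galois descent from $\mathbb{Q}_2(\mu_8)$, whose torsion is already pinned down by Theorem~\ref{E(Qp(mu)):good}(3). Write $K=\mathbb{Q}_2(\mu_4)=\mathbb{Q}_2(i)$ and $L=\mathbb{Q}_2(\mu_8)$; the extension $L/K$ is totally ramified quadratic with $\mathrm{Gal}(L/K)=\langle\tau\rangle\cong\mathbb{Z}/2\mathbb{Z}$, and $K$ has residue field $\mathbb{F}_2$ with $2$-part of $\mu(K)$ equal to $\mu_4$. First I would record that $E(K)_{\mathrm{tor}}$ is a $2$-group: reduction is injective on prime-to-$2$ torsion and lands in $\bar E(\mathbb{F}_2)$, which is cyclic of order $3-a_2\in\{2,4\}$ since $a_2$ is odd (ordinary). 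The same holds over $L$, so $E(K)_{\mathrm{tor}}=E[2^\infty]^{G_K}=N^\tau$, where $N:=E(L)_{\mathrm{tor}}=E(\mathbb{Q}_2(\mu_{2^\infty}))_{\mathrm{tor}}$ is one of the groups in $(I)'_\infty$.

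The heart of the argument is to compute the $\tau$-action on $N$. Using the exact sequence $0\to\hat E[2^\infty]\to E[2^\infty]\to\bar E[2^\infty]\to0$ of $G_{\mathbb{Q}_2}$-modules, with $\hat E[2^\infty]\cong\mu_{2^\infty}\otimes\lambda^{-1}$ and $\bar E[2^\infty]\cong(\mathbb{Q}_2/\mathbb{Z}_2)\otimes\lambda$ for the unramified character $\lambda$ sending Frobenius to the unit root $\alpha$ of $X^2-a_2X+2$, I note that $\tau$ lies in inertia; hence $\tau$ acts trivially on the étale quotient and by $\chi_{\mathrm{cyc}}(\tau)\equiv5\pmod8$ on $\hat E[2^\infty]$. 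In particular $\tau$ is trivial on all connected torsion of order $\le4$. A short computation with $\alpha$ (using $\mu_4\subset K$, $\mu_8\not\subset K$) gives $\alpha\equiv3\pmod4$ when $a_2=1$ and $\alpha\equiv5\pmod8$ when $a_2=-1$, so the connected part $\hat E(\mathfrak m_K)_{\mathrm{tor}}=\hat E[2^\infty]^{G_K}$ equals $\mathbb{Z}/2\mathbb{Z}$ (resp.\ $\mathbb{Z}/4\mathbb{Z}$) in the two cases; the same computation over $L$ shows the rational connected torsion never reaches order $8$. Consequently every element of $N$ of order $8$ is ``mixed'' (nontrivial in both the connected and the étale direction).

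With this, I would run through each $N$ in $(I)'_\infty$ and compute $N^\tau$, using that the étale part is $\tau$-fixed while the connected part is $\tau$-fixed in order $\le4$. For the bound I also invoke the Weil pairing, which forces $E(K)_{\mathrm{tor}}\cong\mathbb{Z}/a\mathbb{Z}\times\mathbb{Z}/b\mathbb{Z}$ with $a\mid4$ since $\mu_a\subset K$. This should produce exactly the groups in $(I)'_2$: for instance $N=\mathbb{Z}/2\mathbb{Z}\times\mathbb{Z}/8\mathbb{Z}$ yields $\mathbb{Z}/2\mathbb{Z}\times\mathbb{Z}/8\mathbb{Z}$ or $\mathbb{Z}/2\mathbb{Z}\times\mathbb{Z}/4\mathbb{Z}$ according to whether $\tau$ is trivial or acts as $(1,\times5)$, and $N=\mathbb{Z}/8\mathbb{Z}$ yields $\mathbb{Z}/4\mathbb{Z}$ when $\tau$ acts by $\times5$. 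For realization, each group is exhibited by an explicit curve over $\mathbb{Q}_2$ (or over $\mathbb{Q}$ with good ordinary reduction at $2$): one starts from the curves realizing $(I)'$ of Theorem~\ref{E(Qp):good}(2) and $(I)'_\infty$, reads off the torsion already rational over $\mathbb{Q}_2(\mu_4)$, and supplies in addition one curve for $\mathbb{Z}/2\mathbb{Z}\times\mathbb{Z}/4\mathbb{Z}$, which occurs at level $\mu_4$ but at neither level $1$ nor $\infty$.

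The main obstacle is precisely the non-split (mixed) part: deciding, when $N$ is $\mathbb{Z}/8\mathbb{Z}$ or $\mathbb{Z}/2\mathbb{Z}\times\mathbb{Z}/8\mathbb{Z}$, whether the top-order point is already $K$-rational, i.e.\ whether $\tau$ acts trivially or by $\times5$ on it; the graded action alone does not settle this. This is exactly what distinguishes the allowed $\mathbb{Z}/4\mathbb{Z}$ and $\mathbb{Z}/2\mathbb{Z}\times\mathbb{Z}/4\mathbb{Z}$ from the forbidden $\mathbb{Z}/8\mathbb{Z}$, and what rules out $\mathbb{Z}/2\mathbb{Z}$ (forcing order $\ge4$) in the case $a_2=1$. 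I expect to resolve it by exploiting that good reduction makes the above sequence an extension of finite flat group schemes over $\mathbb{Z}_2$: this constrains the $2$-power torsion fields---for example it forces $\mathbb{Q}_2(E[2])$ to be generated by units, hence to be one of $\mathbb{Q}_2$, $\mathbb{Q}_2(i)$, $\mathbb{Q}_2(\sqrt{\pm5})$ and never $\mathbb{Q}_2(\sqrt{\pm2})$---and, combined with the explicit value $\chi_{\mathrm{cyc}}(\tau)\equiv5\pmod8$ on the connected part and the Weil pairing, shows that the mixed top-order points genuinely require $\mu_8$. The corresponding statement at the order-$8$ layer is the delicate case I would expect to cost the most work.
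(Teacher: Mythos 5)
Your descent framework is coherent and genuinely different from the paper's argument: writing $K=\mbb{Q}_2(\mu_4)$, $L=\mbb{Q}_2(\mu_8)$, you descend the already-classified group $N=E(\mbb{Q}_2(\mu_{2^{\infty}}))_{\mrm{tor}}=E(L)_{\mrm{tor}}$ through the totally ramified quadratic $L/K$, using that an inertial lift of $\tau$ acts trivially on the \'etale quotient and by $\chi_{\mrm{cyc}}(\tau)\equiv 5\ \mrm{mod}\ 8$ on $\hat{E}$-torsion. Your unit-root congruences, the identification of the connected rational torsion with $\bar{E}(\mbb{F}_2)$, and the exclusion of $\mbb{Q}_2(\sqrt{\pm 2})$ as $\mbb{Q}_2(E[2])$ via finite flatness are all correct and parallel Lemma~\ref{hat=bar}, Fontaine's bound, and Table~\ref{table:degree2} in the paper. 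This bookkeeping does settle the cases with no mixed point of maximal order (for instance $N=\mbb{Z}/2\mbb{Z}\times\mbb{Z}/2\mbb{Z}$, and the automatic $\tau$-fixedness of the connected $\mbb{Z}/4\mbb{Z}$ when $a_2=-1$), and the realization half is fine modulo the same explicit curve computations the paper performs.

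However, the two exclusions that are the actual content of the theorem --- $E(K)_{\mrm{tor}}\not\simeq\mbb{Z}/2\mbb{Z}$ and, above all, $E(K)_{\mrm{tor}}\not\simeq\mbb{Z}/8\mbb{Z}$ --- are exactly the steps you defer, so the proposal has a genuine gap at the decisive point; and the mechanism you propose for closing it cannot work. You aim to show that ``mixed top-order points genuinely require $\mu_8$,'' but this is false: $G_{2,4}=\mbb{Z}/2\mbb{Z}\times\mbb{Z}/8\mbb{Z}$ does occur as $E(\mbb{Q}_2(\mu_4))_{\mrm{tor}}$ (Table~\ref{table:cyclo:p=2}), and since the connected torsion has order at most $4$ by Lemma~\ref{hat=bar}, every order-$8$ point of such a curve is mixed in your sense yet is rational over $\mbb{Q}_2(\mu_4)$. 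The true statement is conditional: a $\mu_4$-rational point of order $8$ forces $E[2]\subset E(\mbb{Q}_2(\mu_4))$, i.e.\ only the cyclic group $\mbb{Z}/8\mbb{Z}$ is forbidden. Proving this is where the paper works hardest: assuming $E(K)_{\mrm{tor}}\simeq\mbb{Z}/8\mbb{Z}$, one gets $\mbb{Q}_2(E[2])\in\{F,L_2\}$, neither contained in $\mbb{Q}_2(\mu_8)$, hence $E(\mbb{Q}_2(\mu_8))_{\mrm{tor}}\simeq\mbb{Z}/8\mbb{Z}$ as well; then a determination of the full non-semisimple images $\rho_{E[8]}(G_{\mbb{Q}_2(\mu_8)})$ and $\rho_{E[8]}(G_{\mbb{Q}_2(\mu_4)})$ produces $\sigma_0\in G_{\mbb{Q}_2(\mu_4)}\smallsetminus G_{\mbb{Q}_2(\mu_8)}$ with $\rho_{E[8]}(\sigma_0)$ trivial mod $2$ and $\psi(\sigma_0)\equiv 5\ \mrm{mod}\ 8$, which must lie in $G_F$ (since $\mbb{Q}_2(E[2],\mu_4)\supset F$ by Proposition~\ref{composite}) and simultaneously outside $G_F$ (since $\psi$ is unramified with kernel $G_F$ at this level) --- a contradiction. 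None of the ingredients on your list (graded $\tau$-action, Weil pairing, constraint on $\mbb{Q}_2(E[2])$) detects this obstruction, precisely because, as you note yourself, the graded action does not determine the action on the non-split extension. The same caveat applies to the $\mbb{Z}/2\mbb{Z}$ exclusion when $a_2=1$: in the paper this is Remark~\ref{remark:nonexistence}, resting on a separate degree count of $\mbb{Q}_2(E[4])$ (the $16$ versus $<16$ ramification argument), not on connected--\'etale bookkeeping.
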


Therefore,  we conclude that,
for all primes $p$ and $0\le n\le \infty$, 
we obtained the complete classifications 
of the  groups those arise as  
$E(\mbb{Q}_p(\mu_{p^n}))_{\mrm{tor}}$ 
for some elliptic curve $E$ over $\mbb{Q}_p$ with 
good reduction. 

\if0
We should note that the field $\mbb{Q}_p(\mu_{p})$ 
(resp.\ $\mbb{Q}_2(\mu_8)$) in the first statement of Theorem \ref{E(Qp(mu)):good} (2)
(resp.\ Theorem \ref{E(Qp(mu)):good} (3))
is  ``the best possible" 
in the sense that, 
for each odd prime $p$ (resp.\ $p=2$), 
there exists an elliptic cureve $E$ over $\mbb{Q}_p$
with good ordinary reduction such that 
the definition field of 
$E(\mbb{Q}_p(\mu_{p^{\infty}}))_{\mrm{tor}}$
is 
$\mbb{Q}_p(\mu_p)$ (resp.\ $\mbb{Q}_2(\mu_8)$).
Examples for such situations are as follows.

\begin{itemize}
\item[--] Suppose $p\ge 3$ and take an elliptic curve $\bar{E}$
such that $\bar{E}(\mbb{F}_p)\not=0$ (such $\bar{E}$ exists for any $p$).
Let $E_{/\mbb{Q}_p}$ be the canonical lift of $\bar{E}$.
Then, $E[p]$ is isomorphic to $\mbb{F}_p(1)\oplus \mbb{F}_p$ as a $G_{\mbb{Q}_p}$-representation
(see the argument of Section \ref{Qp:ord:odd}),
where $G_{\mbb{Q}_p}$ is the absolute Galois group of $\mbb{Q}_p$.
Thus $\mbb{Q}_p(E[p])=\mbb{Q}_p(\mu_p)$. On the other hand,
the prime-to-$p$ part of  $E(\mbb{Q}_p(\mu_{p^{\infty}}))$ 
coincides with that of  $E(\mbb{Q}_p)$ by the N\'eron-Ogg -Shafarevich criterion.
Thus, the definition field of $E(\mbb{Q}_p(\mu_{p^{\infty}}))_{\mrm{tor}}$
is $\mbb{Q}_p(\mu_p)$.
\item[--] 
The definition field of 
$E(\mbb{Q}_2(\mu_{2^{\infty}}))_{\mrm{tor}}$
for $E=$\href{https://www.lmfdb.org/EllipticCurve/Q/15/a/4}{15.a7} 
is $\mbb{Q}_2(\mu_8)$.
\end{itemize}
\fi

\if0
\vspace{5mm}
\begin{acknowledgments}
\end{acknowledgments}
\fi

\begin{corollary}
Assume $p\ge 3$.
Let $K_{\infty}$ be the cyclotomic 
$\mbb{Z}_p$-extension of $\mbb{Q}_p$.
Then,  we have
$E(K_{\infty})_{\mrm{tor}}=E(\mbb{Q}_p)_{\mrm{tor}}$ 
for an elliptic curve $E$ over $\mbb{Q}_p$
with good reduction.
\end{corollary}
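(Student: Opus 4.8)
The plan is to deduce the corollary directly from Theorem \ref{E(Qp(mu)):good}, combined with the elementary fact that the cyclotomic $\mbb{Z}_p$-extension $K_\infty$ and the field $\mbb{Q}_p(\mu_p)$ are linearly disjoint over $\mbb{Q}_p$. The inclusion $E(\mbb{Q}_p)_{\mrm{tor}}\subseteq E(K_\infty)_{\mrm{tor}}$ is automatic, so the whole content is the reverse inclusion, and for that the deep input is already packaged in Theorem \ref{E(Qp(mu)):good}.

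First I would record the field-theoretic input: $K_\infty\cap\mbb{Q}_p(\mu_p)=\mbb{Q}_p$. Recall that $\mrm{Gal}(\mbb{Q}_p(\mu_{p^\infty})/\mbb{Q}_p)\cong\mbb{Z}_p^\times\cong\Delta\times\Gamma$ with $\Delta\cong\mbb{Z}/(p-1)\mbb{Z}$ the Teichm\"uller part and $\Gamma\cong\mbb{Z}_p$ the pro-$p$ part; by definition $K_\infty=\mbb{Q}_p(\mu_{p^\infty})^{\Delta}$, so that $\mrm{Gal}(K_\infty/\mbb{Q}_p)\cong\Gamma\cong\mbb{Z}_p$, while $\mbb{Q}_p(\mu_p)=\mbb{Q}_p(\mu_{p^\infty})^{\Gamma}$ with $\mrm{Gal}(\mbb{Q}_p(\mu_p)/\mbb{Q}_p)\cong\Delta$. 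Any finite subextension of $K_\infty/\mbb{Q}_p$ has degree a power of $p$, whereas every subextension of $\mbb{Q}_p(\mu_p)/\mbb{Q}_p$ has degree dividing $p-1$. Since $\gcd(p-1,p^k)=1$, the intersection $K_\infty\cap\mbb{Q}_p(\mu_p)$ has degree $1$ over $\mbb{Q}_p$, hence equals $\mbb{Q}_p$.

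Next I would run the torsion argument. Take $P\in E(K_\infty)_{\mrm{tor}}$. Because $K_\infty\subseteq\mbb{Q}_p(\mu_{p^\infty})$, we have $P\in E(\mbb{Q}_p(\mu_{p^\infty}))_{\mrm{tor}}$. If $E$ has good supersingular reduction, then Theorem \ref{E(Qp(mu)):good}(1) gives $E(\mbb{Q}_p(\mu_{p^\infty}))_{\mrm{tor}}=E(\mbb{Q}_p)_{\mrm{tor}}$, so $P\in E(\mbb{Q}_p)_{\mrm{tor}}$ at once. If $E$ has good ordinary reduction, then (using the hypothesis $p\ge 3$) Theorem \ref{E(Qp(mu)):good}(2) gives $E(\mbb{Q}_p(\mu_{p^\infty}))_{\mrm{tor}}=E(\mbb{Q}_p(\mu_p))_{\mrm{tor}}$; since this is an equality of subgroups of $E(\overline{\mbb{Q}}_p)$, the point $P$ genuinely has coordinates in $\mbb{Q}_p(\mu_p)$. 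But $P$ also has coordinates in $K_\infty$, hence in $K_\infty\cap\mbb{Q}_p(\mu_p)=\mbb{Q}_p$, so again $P\in E(\mbb{Q}_p)_{\mrm{tor}}$. In either reduction case this yields $E(K_\infty)_{\mrm{tor}}\subseteq E(\mbb{Q}_p)_{\mrm{tor}}$, and combined with the trivial inclusion we conclude $E(K_\infty)_{\mrm{tor}}=E(\mbb{Q}_p)_{\mrm{tor}}$.

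There is no genuine obstacle once Theorem \ref{E(Qp(mu)):good} is available; the only subtlety worth flagging is that the stabilization of the torsion subgroup must be read as an equality of point sets inside $E(\overline{\mbb{Q}}_p)$, so that it actually constrains the field of definition of each torsion point and makes the intersection step legitimate. It is also worth noting \emph{where} the hypothesis $p\ge 3$ enters: it is exactly what lets us invoke the ordinary case of Theorem \ref{E(Qp(mu)):good}, whose stabilization already occurs at $\mbb{Q}_p(\mu_p)$; the $p=2$ ordinary case stabilizes only at $\mbb{Q}_2(\mu_8)$, which does not intersect the cyclotomic $\mbb{Z}_2$-extension trivially, so the same argument would not apply.
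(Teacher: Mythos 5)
Your proof is correct and takes essentially the same route as the paper: the supersingular case is read off from Theorem \ref{E(Qp(mu)):good} (1), and the ordinary case combines the stabilization $E(\mbb{Q}_p(\mu_{p^{\infty}}))_{\mrm{tor}}=E(\mbb{Q}_p(\mu_{p}))_{\mrm{tor}}$ from Theorem \ref{E(Qp(mu)):good} (2) with the fact that $K_{\infty}\cap \mbb{Q}_p(\mu_p)=\mbb{Q}_p$. The only difference is presentational: you spell out the coprime-degree argument for that intersection, which the paper leaves implicit in its chain of equalities.
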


\begin{proof}
If $E$ has good supersingular reduction,
the result is clear by Theorem \ref{E(Qp(mu)):good} (1).
In the case where  $E$ has good ordinary reduction,
the result follows from Theorem \ref{E(Qp(mu)):good} (2); 
$E(K_{\infty})_{\mrm{tor}}
= E(K_{\infty})_{\mrm{tor}}\cap
E(\mbb{Q}_p(\mu_{p^{\infty}}))_{\mrm{tor}}
=E(K_{\infty})_{\mrm{tor}}\cap
E(\mbb{Q}_p(\mu_{p}))_{\mrm{tor}}
=E(\mbb{Q}_p)_{\mrm{tor}}$.
\end{proof}

The organization of the paper is as follows.
In Section \ref{Section:E(Qp)}, we give a 
proof of Theorem \ref{E(Qp):good}.
The arguments differs significantly depending on whether $p$ is odd
or $p=2$.
When $p$ is odd, we use theoretical arguments involving the theory of canonical lifts. 
In case $p=2$, by using MAGMA \cite{BoCa06} and Algorithm \ref{alg:torsion}, 
we explicitly find elliptic curves listed in the Cremona database 
with prescribed torsion subgroups. 
Section \ref{Section:E(Qp(mu))} is the main part of this paper.
In this seciton, we give 
proofs of Theorem \ref{E(Qp(mu)):good}
and Theorem 
\ref{E(Q2(mu4)):good}.
As in Section \ref{Section:E(Qp)}, 
the arguments also differ depending on whether  $p$ is odd
or $p=2$.
For the case where $p$ is odd,
the key is Proposition \ref{Main:cycl}, 
which gives a classification of $p$-parts of the torsion subgroup of 
$E(\mbb{Q}_p(\mu_{p^{\infty}}))$.
For the case where $p=2$, 
theoretical perspectives such as ramification theory 
play an even more important role in addition to verification 
using the Cremona database and computations by MAGMA. 
In Appendix \ref{Data}, we provide data on certain 
extensions of $\mbb{Q}_2$ and some elliptic curves that are 
required for our proof.

\vspace{5mm}
\noindent
{\bf Notation :}
In this paper, $p$-adic fields are finite extension fields of $\mbb{Q}_p$.
If $F$ is an algebraic extension of $\mbb{Q}_p$, 
we denote by $G_F$  the absolute Galois group 
$\mrm{Gal}(\overline{\mbb{Q}}_p/F)$ of $F$.
We also denote by $\mu_{p^n}$ the set of $p^n$-th roots of unity in 
$\overline{\mbb{Q}}_p$ and 
$\mu_{p^{\infty}}:=\cup_{m\ge 0} \mu_{p^m}$. 
The labels of elliptic curves in this paper follow 
the convention used in the Cremona database. 
The data available in the LMFDB \cite{lmfdb} is also useful for referencing elliptic curves; 
however, note that the labeling in the LMFDB differs from 
that of the Cremona label.


%
%

\section{Group structures of $E(\mbb{Q}_p)_{\mrm{tor}}$}
\label{Section:E(Qp)}

The aim of this section is to prove Theorem
\ref{E(Qp):good},
which gives the complete list of 
the groups those arise as the torsion subgroups of the Mordell-Weil groups of elliptic curves over $\mbb{Q}_p$ with good reduction.
Theorem
\ref{E(Qp):good}
is a combination of Theorem \ref{E(Qp):ord} 
and Theorem \ref{E(Qp):ord:p=2} below.

In the rest of this paper, we use the following notations.
\begin{itemize}
\item For an elliptic curve $E$ over $\mbb{Q}_p$
with good reduction, we denote by $\hat{E}$ and $\bar{E}$ the formal group over $\mbb{Z}_p$ associated with $E$ and 
the reduction of $E$, respectively.
We denote by $T_p(E):=\plim_{n}E[p^n]$ the $p$-adic Tate module 
of $E$ and put $V_p(E)=T_p(E)\otimes_{\mbb{Z}_p} \mbb{Q}_p$.
Similarly, we often use notations 
$T_p(\hat{E}), V_p(\hat{E}), T_p(\bar{E})$ and $V_p(\bar{E})$.

\item 
For a field $K$, we denote by 
$\mcal{E}(K)$ the set of the 
isomorphism classes of groups 
which are isomorphic to the torsion subgroup  $E(K)_{\mrm{tor}}$
of $E(K)$ for some elliptic curve $E$ over $K$.
\item For integers $m,k\ge 1$, we set 
$G_{m,k}:=\mbb{Z}/m\mbb{Z}\times \mbb{Z}/mk\mbb{Z}$.
\item We denote by $I$ the set of pairs $(k,m)$ of positive integers 
such that  $m\mid p-1$ and 
$(\sqrt{p}-1)^2<km^2<(\sqrt{p}+1)^2$.
\end{itemize}
The set $\mcal{E}(\mbb{F}_p)$ was well-studied 
by Hasse, Deuring,..., R\"uck and Volock.
The following statement is due to \cite[Lemma 3.5]{BPS12}.
\begin{theorem}
\label{E(Fp)}
$\mcal{E}(\mbb{F}_p)=
\left\{G_{m,k}\mid (m,k)\in I\right\}.$
\end{theorem}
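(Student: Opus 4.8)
The plan is to prove the two inclusions of the asserted equality separately, the containment $\mcal{E}(\mbb{F}_p)\subseteq\{G_{m,k}\mid (m,k)\in I\}$ being elementary and the reverse (realizability) containment being the substantive part. First I would record the general structure: for an elliptic curve $E$ over $\mbb{F}_p$ and any prime $\ell$, the $\ell$-primary part of $E(\mbb{F}_p)$ injects into $E[\ell^{\infty}](\overline{\mbb{F}}_p)$, which is a subquotient of $(\mbb{Q}_\ell/\mbb{Z}_\ell)^2$; hence $E(\mbb{F}_p)$ has rank at most $2$ as an abelian group and is therefore isomorphic to $G_{m,k}=\mbb{Z}/m\mbb{Z}\times\mbb{Z}/mk\mbb{Z}$ for uniquely determined $m,k\ge 1$. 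It then remains to check $(m,k)\in I$, i.e. $m\mid p-1$ and $(\sqrt p-1)^2<km^2<(\sqrt p+1)^2$. The size constraint is the Hasse bound: writing $a=p+1-\#E(\mbb{F}_p)$ one has $|a|\le 2\sqrt p$, and equality would force $2\sqrt p\in\mbb{Z}$, impossible since $p$ is prime; thus the inequalities are strict and $(\sqrt p-1)^2<\#E(\mbb{F}_p)=km^2<(\sqrt p+1)^2$. For the divisibility, note that the full $m$-torsion of $G_{m,k}$ has order $m^2=\#E[m]$, so $E[m]\cong(\mbb{Z}/m\mbb{Z})^2$ is contained in $E(\mbb{F}_p)$; since the Weil pairing $E[m]\times E[m]\to\mu_m$ is non-degenerate and Galois-equivariant and Frobenius acts trivially on $E[m]$, it must act trivially on $\mu_m$, whence $\mu_m\subseteq\mbb{F}_p^{\times}$ and $m\mid p-1$.

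For the reverse containment I would fix $(m,k)\in I$ and construct an $E$ over $\mbb{F}_p$ with $E(\mbb{F}_p)\cong G_{m,k}$; this is where the classical work of Deuring, Waterhouse, R\"uck and Voloch enters. Set $N=km^2$ and $a=p+1-N$, so $|a|<2\sqrt p$ by the strict Hasse inequality. The idea is to prescribe the Frobenius endomorphism: one seeks a Weil number $\pi$ (a root of $x^2-ax+p$) together with an elliptic curve $E/\mbb{F}_p$ whose Frobenius is $\pi$, so that $E(\mbb{F}_p)=\ker(\pi-1)$ has order $|N(\pi-1)|=N$. In the ordinary case $\gcd(a,p)=1$, Deuring's lifting and CM theory produce such a curve with $\mrm{End}(E)$ equal to a chosen order $\cO$ in $\mbb{Q}(\pi)$ containing $\mbb{Z}[\pi]$, and the group structure is then governed by $\cO$ via $E(\mbb{F}_p)\cong\cO/(\pi-1)\cO$. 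The requirement that the full $m$-torsion be rational translates, using $m\mid p-1$ and $m^2\mid N$, into a congruence $\pi\equiv 1$ modulo $m$ in a suitable order, and one checks that some admissible $\cO$ realizes exactly $G_{m,k}$. The supersingular orders occurring in $I$ (which force $m\le 2$) are treated by a direct and separate analysis of the possible Frobenius values.

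The main obstacle is precisely this last step: passing from the realizability of the cardinality $\#E(\mbb{F}_p)=km^2$ (Waterhouse) to the realizability of the prescribed group structure $G_{m,k}$ (R\"uck, Voloch). Controlling $E(\mbb{F}_p)=\ker(\mrm{Frob}-1)$ as a module, rather than merely its order, requires choosing the endomorphism order so that $\mrm{Frob}-1$ has the correct elementary divisors, and verifying that the necessary conditions $m\mid p-1$ and the Hasse interval are also sufficient for such a choice to exist. Since Theorem \ref{E(Fp)} is exactly the output of this classification, in practice I would invoke \cite[Lemma 3.5]{BPS12} together with the underlying theorems of Waterhouse and R\"uck for the realizability direction, while the elementary forward argument above already supplies the necessity of the conditions defining $I$.
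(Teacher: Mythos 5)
Your proposal is correct and in substance coincides with the paper's treatment: the paper offers no argument of its own but simply quotes \cite[Lemma 3.5]{BPS12}, and your proof likewise defers the essential realizability direction (constructing, for each $(m,k)\in I$, a curve with $E(\mbb{F}_p)\simeq G_{m,k}$) to that same result and the underlying theorems of Waterhouse and R\"uck. Your elementary verification of the forward inclusion --- rank at most $2$, strictness of the Hasse bounds since $2\sqrt{p}\notin\mbb{Z}$, and the Weil-pairing argument giving $m\mid p-1$ --- is correct, but it is already subsumed in the cited classification, so both proofs ultimately rest on the identical reference.
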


We denote by 
$\mcal{E}_{\mrm{good}}(\mbb{Q}_p)$
(resp.\ $\mcal{E}_{\mrm{ord}}(\mbb{Q}_p)$, 
resp.\ $\mcal{E}_{\mrm{ss}}(\mbb{Q}_p)$)
the subset of $\mcal{E}(\mbb{Q}_p)$ 
consisting of 
isomorphism classes of $E(\mbb{Q}_p)_{\mrm{tor}}$ 
for some elliptic curve $E$ over $\mbb{Q}_p$ with good reduction 
(resp.\ good ordinary reduction,
resp.\ good supersingular reduction).
We clearly have 
$$
\mcal{E}_{\mrm{good}}(\mbb{Q}_p)=
\mcal{E}_{\mrm{ord}}(\mbb{Q}_p)\cup
\mcal{E}_{\mrm{ss}}(\mbb{Q}_p).
$$ 
\if0
Although complete lists of these sets may be known to some experts, 
they seem not to  appear in the literature, 
so we give it here for the sake of completeness.
\fi
Let $E$ be an elliptic curve over $\mbb{Q}_p$
with good reduction.
We have an exact sequence 
$$
0\to \hat{E}(\mbb{Q}_p)\to E(\mbb{Q}_p)\to \bar{E}(\mbb{F}_p)\to 0
$$
of modules
(cf.\ \cite[Section VII.2]{Sil09})\footnote{
In this paper, for an algebraic extension $K$ of $\mbb{Q}_p$ with maximal ideal $\mbf{m}_K$,
we denote by $\hat{E}(K)$ the group $\hat{E}(\mbf{m}_K)=\mbf{m}_K$ 
determined by the formal group $\hat{E}$ (cf.\ \cite[Chapter IV.3]{Sil09}).}.
Since the pro-$p$ group 
$\hat{E}(\mbb{Q}_p)$ has no torsion points if $p\ge 3$ 
(cf.\ \cite[Section IV, Proposition 3.2 and Theorem 6.1]{Sil09})
and $\bar{E}(\mbb{F}_p)[p^{\infty}]=\bar{E}(\mbb{F}_p)[p]$ by the Hasse bound,
the reduction map induces an isomorphism 
\begin{equation}
\label{p'-part}
E(\mbb{Q}_p)_{p'}\simeq \bar{E}(\mbb{F}_p)_{p'}
\end{equation}
and an injection 
\begin{equation}
\label{p-part}
E(\mbb{Q}_p)[p^{\infty}]
\hookrightarrow \bar{E}(\mbb{F}_p)[p]\quad \mbox{if $p\ge 3$}.
\end{equation}
Here, for a module $M$, 
we denote by 
$M[p^n]$   
the submodule of $M$ killed by $p^n$, 
$M[p^{\infty}]:=\cup_{n>0}M[p^n]$, and also denote by $M_{p'}$ 
the prime-to-$p$ part of $M$.

\subsection{The case $p\ge 3$}

We study $\mcal{E}_{\mrm{ord}}(\mbb{Q}_p)$,
$\mcal{E}_{\mrm{ss}}(\mbb{Q}_p)$ and 
$\mcal{E}_{\mrm{good}}(\mbb{Q}_p)$ for an odd prime $p$.
We use the following notations.
\begin{itemize}
\item We denote by $I_{\mrm{ord}}\, (\subset I)$ 
the set of pairs $(k,m)$ of positive integers 
such that $m\mid p-1$,  
$(\sqrt{p}-1)^2<km^2<(\sqrt{p}+1)^2$ 
and $km^2\not\equiv 1\ \mrm{mod}\ p$.
\item We denote by $I_{\mrm{ss}}\, (\subset I)$ 
the set of pairs $(k,m)$ of positive integers 
such that $m\mid p-1$,  
$(\sqrt{p}-1)^2<km^2<(\sqrt{p}+1)^2$ 
and $km^2\equiv 1\ \mrm{mod}\ p$.
\end{itemize}
By definition we have $I=I_{\mrm{ord}}\cup I_{\mrm{ss}}$.
A straight forward calculation shows that 
the set $I_{\mrm{ss}}$ coincides with 
$\{(1,1), (1,4), (1,7), (2,1) \}$
(resp.\ $\{ (1,1+p) \}$, resp.\ $\{(1,1+p), (2,\frac{1+p}{4}) \}$)
if $p=3$ (resp.\ $p \equiv 1 \ \mrm{mod} \ 4$, 
resp.\ $p\neq 3$ and $p \equiv 3 \ \mrm{mod} \ 4$).

\begin{theorem}[=Theorem \ref{E(Qp):good} (1)] 
\label{E(Qp):ord}
Assume $p\ge 3$. 
\begin{itemize}
\item[{\rm (1)}] $\mcal{E}_{\mrm{ord}}(\mbb{Q}_p)
=\{ G_{m,k} \mid \mbox{$(m,k)\in I_{\mrm{ord}}$}\}
\cup \{0\}$.
\item[{\rm (2)}]
$\mcal{E}_{\mrm{ss}}(\mbb{Q}_p)
= \{G_{m,k}\mid (m,k)\in I_{\mrm{ss}} \}$.
Explicitly, we have 
\begin{align*}
\mcal{E}_{\mrm{ss}}(\mbb{Q}_p) 
&= \left\{ \, 
\begin{aligned}
&\{G_{1,1}, G_{1,4}, G_{1,7}, G_{2,1}\} & if & \ p=3\\
&\{G_{1,1+p}\} & if & \ p \equiv 1 \ \mrm{mod} \ 4\\
&\{G_{1,1+p}, G_{2,\frac{1+p}{4}} \} & if & \ p\neq 3,\ p \equiv 3 \ \mrm{mod} \ 4\\
\end{aligned}
\right.
\end{align*}
\item[{\rm (3)}]
$\mcal{E}_{\mrm{good}}(\mbb{Q}_p)
 = \mcal{E}(\mbb{F}_p)\cup \{0\}
= \{G_{m,k}\mid (m,k)\in I \} \cup \{0\}$.
\end{itemize}
\end{theorem}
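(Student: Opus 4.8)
The plan is to reduce everything to the structure of $\bar E(\mbb{F}_p)$ together with the behaviour of the $p$-primary torsion, and then to treat the supersingular and ordinary cases separately. Since $p\ge 3$, the formal group $\hat E(\mbb{Q}_p)$ is torsion-free, so the reduction map identifies $E(\mbb{Q}_p)_{\mrm{tor}}$ with a subgroup of $\bar E(\mbb{F}_p)$; concretely \eqref{p'-part} and \eqref{p-part} give
\[
E(\mbb{Q}_p)_{\mrm{tor}}\;\simeq\;\bar E(\mbb{F}_p)_{p'}\times E(\mbb{Q}_p)[p^{\infty}],
\qquad E(\mbb{Q}_p)[p^{\infty}]\hookrightarrow \bar E(\mbb{F}_p)[p].
\]
By the Hasse bound the $p$-part of $\bar E(\mbb{F}_p)$ is either $0$ or $\mbb{Z}/p\mbb{Z}$, so $E(\mbb{Q}_p)[p^{\infty}]\in\{0,\mbb{Z}/p\mbb{Z}\}$. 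Throughout I will use Theorem~\ref{E(Fp)}: the groups arising as $\bar E(\mbb{F}_p)$ are exactly the $G_{m,k}$ with $(m,k)\in I$, and since $\#\bar E(\mbb{F}_p)=m^2k=p+1-a_p$, the curve $\bar E$ is supersingular precisely when $m^2k\equiv 1\ \mrm{mod}\ p$ (i.e.\ $p\mid a_p$), ordinary otherwise. Thus the order alone determines the reduction type and $I=I_{\mrm{ord}}\sqcup I_{\mrm{ss}}$ is exactly the ordinary/supersingular dichotomy on the reduction side.

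For part (2) the argument is short. If $\bar E$ is supersingular then $p\nmid\#\bar E(\mbb{F}_p)$, so $\bar E(\mbb{F}_p)[p]=0$ and the injection above forces $E(\mbb{Q}_p)_{\mrm{tor}}\simeq\bar E(\mbb{F}_p)$. Since any lift of a supersingular $\bar E$ has good supersingular reduction, Theorem~\ref{E(Fp)} gives $\mcal{E}_{\mrm{ss}}(\mbb{Q}_p)=\{G_{m,k}\mid(m,k)\in I_{\mrm{ss}}\}$. The explicit list is then a finite elementary computation: for $p\ge5$ the bound $|a_p|\le 2\sqrt p$ together with $p\mid a_p$ forces $a_p=0$, whence $\#\bar E(\mbb{F}_p)=p+1$, and the constraints $m\mid p-1$, $\gcd(p-1,p+1)=2$ pin down $m=1$ (resp.\ $m\in\{1,2\}$) according to $p\equiv1$ (resp.\ $p\equiv3$) $\mrm{mod}\ 4$; for $p=3$ one checks the admissible orders $1,4,7$ by hand.

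The heart of the proof is part (1), where the delicate point is $E(\mbb{Q}_p)[p^{\infty}]$, to be controlled through the Galois-module structure of $E[p]$ and the theory of canonical lifts. For ordinary $E$ the reduction sequence presents $E[p]$ as an extension of the unramified quotient $\bar E[p]$ by the multiplicative piece $\hat E[p]$, on which $G_{\mbb{Q}_p}$ acts through a cyclotomic-times-unramified character; since $p\ge3$ this character is ramified, hence nontrivial, so $\hat E[p]^{G_{\mbb{Q}_p}}=0$ and $E(\mbb{Q}_p)[p^{\infty}]\hookrightarrow\bar E[p]^{G_{\mbb{Q}_p}}$, which is $0$ unless $p\mid\#\bar E(\mbb{F}_p)$ and is $\mbb{Z}/p\mbb{Z}$ in that case. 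To realize the full groups I will use the Serre--Tate canonical lift $E$ of a given ordinary $\bar E$: its connected--\'etale sequence splits $G_{\mbb{Q}_p}$-equivariantly, so $E[p]\simeq\mu_p\oplus\bar E[p]$, and the rational $p$-torsion of $\bar E$ (when present) lifts, giving $E(\mbb{Q}_p)_{\mrm{tor}}\simeq\bar E(\mbb{F}_p)=G_{m,k}$ for every $(m,k)\in I_{\mrm{ord}}$. To produce the trivial group I will take $\bar E$ with $\#\bar E(\mbb{F}_p)=p$ and a \emph{non-canonical} lift, for which the extension class in $H^1(G_{\mbb{Q}_p},\mu_p)\simeq\mbb{Q}_p^{\times}/(\mbb{Q}_p^{\times})^p$ is nonzero; then the rational point of $\bar E[p]$ does not lift, $E(\mbb{Q}_p)[p^{\infty}]=0$, and $E(\mbb{Q}_p)_{\mrm{tor}}=0$. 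The main obstacle is precisely this realization step, namely producing both a split and a non-split lift, which is exactly what Serre--Tate deformation theory supplies. Finally I must verify the containment $\subseteq$: when $p\mid\#\bar E(\mbb{F}_p)$ and the $p$-torsion fails to lift, the group obtained is $\bar E(\mbb{F}_p)_{p'}$, and a short case analysis (only $\bar E(\mbb{F}_p)\simeq\mbb{Z}/p\mbb{Z}$ for $p\ge5$, and $\mbb{Z}/3\mbb{Z},\mbb{Z}/6\mbb{Z}$ for $p=3$, carry a $p$-part) shows this is again some $G_{m,k}$ with $(m,k)\in I_{\mrm{ord}}$, or trivial.

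Part (3) is then formal: since $I=I_{\mrm{ord}}\sqcup I_{\mrm{ss}}$ and $\mcal{E}_{\mrm{good}}(\mbb{Q}_p)=\mcal{E}_{\mrm{ord}}(\mbb{Q}_p)\cup\mcal{E}_{\mrm{ss}}(\mbb{Q}_p)$, combining (1) and (2) yields $\mcal{E}_{\mrm{good}}(\mbb{Q}_p)=\{G_{m,k}\mid(m,k)\in I\}\cup\{0\}=\mcal{E}(\mbb{F}_p)\cup\{0\}$ by Theorem~\ref{E(Fp)}.
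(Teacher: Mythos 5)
Your proposal is correct and follows essentially the same route as the paper: reduce everything to $\bar E(\mbb{F}_p)$ using that $\hat E(\mbb{Q}_p)$ is torsion-free for $p\ge 3$, realize the groups $G_{m,k}$ with $p\mid km^2$ by the canonical lift (the paper justifies the splitting of $E[p]$ via the CM/abelian-image argument rather than the connected--\'etale sequence), and realize the trivial group by a non-canonical lift whose extension class is nonzero, which is exactly the content of the David--Weston lemmas the paper cites. One minor slip in your containment step: for $p=5$ the group $\mbb{Z}/10\mbb{Z}$ also carries a $p$-part (the paper's case $(m,k)=(1,2p)$ for $p\le 5$), but this does not affect your conclusion, since dropping the $5$-part yields $G_{1,2}$ and $(1,2)\in I_{\mrm{ord}}$.
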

For the proof of the theorem,
it suffices to show (1) and (2).

\subsubsection{The case of ordinary reduction}
\label{Qp:ord:odd}
We show Theorem \ref{E(Qp):ord} (1).
Let $E$ be an elliptic curve over $\mbb{Q}_p$ with good reduction.
Suppose that $\bar{E}$ is ordinary and 
$\bar{E}(\mbb{F}_p)=G_{m,k}$. 
Since  $a_p(E):=1+p-\# \bar{E}(\mbb{F}_p)$ is prime to $p$,
we have $km^2\not \equiv 1\ \mrm{mod}\ p$.
\if0
By \eqref{p'-part} and \eqref{p-part},
we have $E(\mbb{Q}_p)_{\mrm{tor}}=G_{m, k}$ and $E(\mbb{Q}_p)_{\mrm{tor}}=G_{m, k/p}$
if $E(\mbb{Q}_p)[p]\neq 0$
and $E(\mbb{Q}_p)[p]=0$, respectively.
\fi
By \eqref{p'-part} and \eqref{p-part},
we see that 
$E(\mbb{Q}_p)_{\mrm{tor}}$
is isomorphic to $G_{m, k}$
or 
$G_{m, k/p}$ (with $p\mid k$).
Hence we obtain 
\begin{equation*}
\mcal{E}_{\mrm{ord}}(\mbb{Q}_p)
\subset  
\mcal{E}^1_{\mrm{ord}}(\mbb{Q}_p)
\cup 
\mcal{E}^2_{\mrm{ord}}(\mbb{Q}_p)
\cup 
\mcal{E}^3_{\mrm{ord}}(\mbb{Q}_p)
\end{equation*}
where 
\begin{align*}
\mcal{E}^1_{\mrm{ord}}(\mbb{Q}_p) & :=
\{ G_{m,k} \mid \mbox{$(m,k)\in I_{\mrm{ord}}$, $p\nmid k$}\}, \\
\mcal{E}^2_{\mrm{ord}}(\mbb{Q}_p) & :=
\{ G_{m,k} \mid \mbox{$(m,k)\in I_{\mrm{ord}}$, $p\mid k$}\}, \\
\mcal{E}^3_{\mrm{ord}}(\mbb{Q}_p)& :=
\{ G_{m,k/p} \mid \mbox{$(m,k)\in I_{\mrm{ord}}$, $p\mid k$}\}.
\end{align*}
In the following, we show that the inclusion ``$\subset$"
above is in fact equal.
It suffices to show that each $\mcal{E}^i_{\mrm{ord}}(\mbb{Q}_p)$
is a subset of $\mcal{E}_{\mrm{ord}}(\mbb{Q}_p)$.

Let $G_{m,k}\in \mcal{E}^1_{\mrm{ord}}(\mbb{Q}_p)$.
By Theorem \ref{E(Fp)},
there exists an elliptic curve $\bar{E}$ over $\mbb{F}_p$
such that $G_{m,k}\simeq \bar{E}(\mbb{F}_p)$.
By considering lifts to $\mbb{Z}_p$ of the coefficients of 
the Weierstrass equation of $\bar{E}$, 
we obtain an elliptic curve $E$ over $\mbb{Q}_p$
with good ordinary reduction whose reduction is $\bar{E}$.
Since $\bar{E}(\mbb{F}_p)$ has no $p$-torsion points  
by $p\nmid k$, 
it follows from \eqref{p'-part} and \eqref{p-part}
that $E(\mbb{Q}_p)_{\mrm{tor}}\simeq G_{m,k}$.
Thus we have 
$\mcal{E}^1_{\mrm{ord}}(\mbb{Q}_p)\subset \mcal{E}_{\mrm{ord}}(\mbb{Q}_p)$.

Let $G_{m,k}\in \mcal{E}^2_{\mrm{ord}}(\mbb{Q}_p)$.
In this case, we see $(m,k)\in \{(1,p), (1,2p)\}$ if $p\le 5$
and $(m,k)\in \{(1,p)\}$ if $p> 5$.
Write $k=jp$.
By Theorem \ref{E(Fp)}, 
there exists an elliptic curve $\bar{E}$ over $\mbb{F}_p$
such that $\bar{E}(\mbb{F}_p)\simeq G_{m,k}=G_{1,jp}$.
Let $E_0/_{\mbb{Q}_p}$ be the canonical lift of $\bar{E}$.
Since $\mrm{End}_{\mbb{Q}_p} E_0=\mrm{End}_{\mbb{F}_p}\bar{E}$ 
is an order of an imaginary quadratic field,
the $G_{\mbb{Q}_p}$-action on $E_0[p]$ 
factors an abelian quotient.
In addition, 
we see $\hat{E}[p]\simeq \mbb{F}_p(1)$ and 
$\bar{E}[p]\simeq \mbb{F}_p$
as $G_{\mbb{Q}_p}$-modules
since $\bar{E}(\mbb{F}_p)[p]$ is not trivial.
Hence we have (non-canonical) isomorphisms 
$E_0[p]\simeq \hat{E}[p]\oplus \bar{E}[p]
\simeq \mbb{F}_p(1)\oplus \mbb{F}_p$
of $G_{\mbb{Q}_p}$-modules.
Thus we obtain $E_0(\mbb{Q}_p)[p]\simeq \bar{E}(\mbb{F}_p)[p]\simeq G_{1,p}$,
which gives $E(\mbb{Q}_p)[p^{\infty}]\simeq  G_{1,p}$
by \eqref{p-part}.
On the other hand, we also have $E_0(\mbb{Q}_p)_{p'}\simeq \bar{E}(\mbb{F}_p)_{p'}\simeq G_{1,j}$ by \eqref{p'-part}.
Hence we obtain $E_0(\mbb{Q}_p)_{\mrm{tor}}\simeq G_{1,jp}$.
Therefore, we obtain 
$\mcal{E}^2_{\mrm{ord}}(\mbb{Q}_p)\subset \mcal{E}_{\mrm{ord}}(\mbb{Q}_p)$.

Let $G_{m,k/p}\in \mcal{E}^3_{\mrm{ord}}(\mbb{Q}_p)$.
In this case, we see
 $(m,k)\in \{(1,p), (1,2p)\}$ if $p\le 5$
and $(m,k)\in \{(1,p)\}$ if $p> 5$.
Since $G_{1,2}\in \mcal{E}^1_{\mrm{ord}}(\mbb{Q}_p)$ if $p\le 5$, 
it suffices to show  that $G_{1,1}\, (=0)$ is an element of $\mcal{E}_{\mrm{ord}}(\mbb{Q}_p)$.
By Theorem \ref{E(Fp)}, 
there exists an elliptic curve $\bar{E}$ over $\mbb{F}_p$
such that $\bar{E}(\mbb{F}_p)\simeq G_{m,k}=G_{1,p}$.
Take any lift $\tilde{E}$  to $W_2:=W(\mbb{F}_p)/p^2W(\mbb{F}_p)$ 
of $\bar{E}$
such that 
$\dim_{\mbb{F}_p} \tilde{E}(W_2)\otimes_{\mbb{Z}} \mbb{F}_p = 1$ 
(there exist $p-1$ isomorphism class of such $\tilde{E}$ 
by \cite[Lemma 3.1 and Lemma 3.2]{DaWe08}).
Taking any lift $E/_{\mbb{Q}_p}$ of $\tilde{E}$,
we have $E(\mbb{Q}_p)[p]=0$ by \cite[Lemma 3.1]{DaWe08}.
Since we have $E(\mbb{Q}_p)_{p'}\simeq \bar{E}(\mbb{F}_p)_{p'}=0$,
it holds $E(\mbb{Q}_p)_{\mrm{tor}}=0=G_{1,1}$.
Consequently, we have 
$\mcal{E}^3_{\mrm{ord}}(\mbb{Q}_p)\subset \mcal{E}_{\mrm{ord}}(\mbb{Q}_p)$.
Thus we proved Theorem \ref{E(Qp):ord} (1).

\subsubsection{The case of supersingular reduction}
We show Theorem \ref{E(Qp):ord} (2).
Let $E$ be an elliptic curve over $\mbb{Q}_p$ with good reduction.
Suppose that $\bar{E}$ is supersingular. 
By \eqref{p'-part} and \eqref{p-part},
we have $E(\mbb{Q}_p)_{\mrm{tor}}\simeq \bar{E}(\mbb{F}_p)$.
Thus we have 
\begin{equation}
\label{Ess}
\mcal{E}_{\mrm{ss}}(\mbb{Q}_p)\subset 
\{G_{m,k}\mid (m,k)\in I_{\mrm{ss}}\}.
\end{equation}
Conversely, let $G_{m,k}$  be in the right hand side of the above
and let $\bar{E}$ be the elliptic curve over $\mbb{F}_p$ with
$\bar{E}(\mbb{F}_p)\simeq G_{m,k}$. 
Note that $\bar{E}$ is supersingular since $a_p(E)=1+p-\# \bar{E}(\mbb{F}_p)\equiv 0\ \mrm{mod}\ p$.
By taking any lift $E/_{\mbb{Q}_p}$ of $\bar{E}$,
we have $E(\mbb{Q}_p)_{\mrm{tor}}\simeq \bar{E}(\mbb{F}_p)\simeq G_{m,k}$
by \eqref{p'-part} and \eqref{p-part} again.
Thus  the inclusion ``$\subset$" in \eqref{Ess} is in fact equal.
This finises a proof of Theorem \ref{E(Qp):ord} (2).

\subsection{The case $p=2$}
We study $\mcal{E}_{\mrm{ord}}(\mbb{Q}_p)$,
$\mcal{E}_{\mrm{ss}}(\mbb{Q}_p)$ and 
$\mcal{E}_{\mrm{good}}(\mbb{Q}_p)$ for $p=2$.

\begin{theorem}[=Theorem \ref{E(Qp):good} (2)]
\label{E(Qp):ord:p=2} 
\begin{itemize}
\item[{\rm (1)}] 
$\mcal{E}_{\mrm{ord}}(\mbb{Q}_2)
=\{ G_{1,2}, G_{1,4}, G_{1,8}, G_{2,1}, G_{2,2}\}$.
\item[{\rm (2)}]
$\mcal{E}_{\mrm{ss}}(\mbb{Q}_2)
= \{ G_{1,1}, G_{1,3},G_{1,5} \}$.
\item[{\rm (3)}]
$\mcal{E}_{\mrm{good}}(\mbb{Q}_2)
= \{G_{1,1}, G_{1,2}, G_{1,3}, G_{1,4}, G_{1,5}, G_{1,8}, G_{2,1}, G_{2,2}\} $.
\end{itemize}
\end{theorem}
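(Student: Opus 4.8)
The plan is to analyze the torsion through the reduction exact sequence
$0\to \hat{E}(\mbb{Q}_2)\to E(\mbb{Q}_2)\to \bar{E}(\mbb{F}_2)\to 0$
exactly as in the case $p\ge 3$, but now confronting the essential new feature at $p=2$: the pro-$2$ group $\hat{E}(\mbb{Q}_2)$ need no longer be torsion-free, so the clean injection \eqref{p-part} is unavailable and the $2$-primary part of $E(\mbb{Q}_2)_{\mrm{tor}}$ may be strictly larger than $\bar{E}(\mbb{F}_2)[2]$. The whole argument therefore hinges on computing $\hat{E}(\mbb{Q}_2)[2^{\infty}]$. By Theorem \ref{E(Fp)} applied with $p=2$ (so $m=1$) we have $\mcal{E}(\mbb{F}_2)=\{G_{1,k}\mid 1\le k\le 5\}$, and $\bar E$ is ordinary (resp.\ supersingular) precisely when $a_2(E)$ is odd (resp.\ even), that is when $\bar{E}(\mbb{F}_2)$ is cyclic of order $2$ or $4$ (resp.\ of order $1$, $3$ or $5$). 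Note that \eqref{p'-part} still holds, so the prime-to-$2$ part of $E(\mbb{Q}_2)_{\mrm{tor}}$ is always $\bar{E}(\mbb{F}_2)_{2'}$.

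For the supersingular case (2), $\hat{E}$ has height $2$, so $[2]_{\hat E}(X)=2X+\cdots$ with $[2]_{\hat E}(X)\equiv uX^{4}\pmod 2$ for a unit $u$; the Newton polygon of $[2]_{\hat E}(X)/X$ then forces every nonzero $2$-torsion point of $\hat{E}$ to have valuation $1/3\notin\mbb{Z}$. Hence $\hat{E}(\mbb{Q}_2)[2]=0$, and since $\bar{E}(\mbb{F}_2)$ has odd order its $2$-part vanishes, giving $E(\mbb{Q}_2)_{\mrm{tor}}\simeq\bar{E}(\mbb{F}_2)\in\{G_{1,1},G_{1,3},G_{1,5}\}$ by \eqref{p'-part}. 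Conversely each such $\bar E$ exists over $\mbb{F}_2$ by Theorem \ref{E(Fp)}, is automatically supersingular, and any lift to $\mbb{Q}_2$ realizes the group; this proves (2).

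For the ordinary case (1), $\hat{E}$ has height $1$ and its connected $2$-divisible group is, as a $G_{\mbb{Q}_2}$-module, isomorphic to a twist $\mu_{2^{\infty}}\otimes\lambda$ of $\mu_{2^\infty}$ by an unramified character $\lambda$. Since inertia acts through the cyclotomic character and $\mbb{Q}_2(\mu_4)/\mbb{Q}_2$ is ramified, the inertia invariants are $\mu_2\otimes\lambda$; as $\lambda\bmod 2$ is trivial this yields $\hat{E}(\mbb{Q}_2)[2^{\infty}]\simeq\mbb{Z}/2\mbb{Z}$ in every case. Feeding this together with $\bar{E}(\mbb{F}_2)[2^\infty]$ cyclic of order $2$ or $4$ into the reduction sequence, $E(\mbb{Q}_2)_{\mrm{tor}}=E(\mbb{Q}_2)[2^\infty]$ is an extension of a cyclic group of order $\le 4$ by $\mbb{Z}/2\mbb{Z}$, of total order $\le 8$ and rank $\le 2$; enumerating the possible extensions leaves exactly $\{G_{1,2},G_{1,4},G_{1,8},G_{2,1},G_{2,2}\}$ as an upper bound.

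It then remains to realize each of these five groups by an elliptic curve over $\mbb{Q}_2$ with good ordinary reduction (part (3) being the union of (1) and (2)). Here I would follow the computational route indicated in the introduction: using MAGMA \cite{BoCa06} and Algorithm \ref{alg:torsion}, search the Cremona database for curves having the prescribed reduction type at $2$ and the prescribed $E(\mbb{Q}_2)_{\mrm{tor}}$. The main obstacle is precisely this realization step: the theoretical analysis only bounds the group as an abstract extension, and deciding which of the genuinely distinct possibilities (for instance $G_{1,8}$ versus $G_{2,2}$, or whether the extension by $\mbb{Z}/2\mbb{Z}$ splits) actually occurs requires exhibiting explicit curves and verifying both the good ordinary reduction and the exact $2$-adic torsion by computation rather than by pure thought.
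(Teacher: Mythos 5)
Your proposal is correct, and at the level of overall strategy it coincides with the paper's proof: both run the torsion through the exact sequence $0\to \hat{E}(\mbb{Q}_2)\to E(\mbb{Q}_2)\to \bar{E}(\mbb{F}_2)\to 0$, pin down $\hat{E}(\mbb{Q}_2)_{\mrm{tor}}$, enumerate the possible extensions, and realize the ordinary groups by explicit curves with MAGMA. The technical inputs, however, are genuinely different, and each version buys something. In the supersingular case the paper quotes Serre's theorem that $E[2]$ is an irreducible $G_{\mbb{Q}_2}$-module (so $E(\mbb{Q}_2)[2]=0$) and then still realizes $G_{1,1},G_{1,3},G_{1,5}$ computationally via Cremona curves; your Newton-polygon computation for the height-two formal group (the three nonzero points of $\hat{E}[2]$ have valuation $1/3$, hence cannot be $\mbb{Q}_2$-rational) is more elementary and, importantly, makes the realization purely theoretical --- any lift of any supersingular $\bar{E}/\mbb{F}_2$ works, exactly mirroring the paper's own treatment of supersingular reduction for odd $p$ --- so your part (2) needs no computation at all. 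In the ordinary case the paper gets $\hat{E}(\mbb{Q}_2)_{\mrm{tor}}\simeq \mbb{Z}/2\mbb{Z}$ from Silverman's valuation bounds on formal-group torsion (order at most $2$) plus triviality of the Galois action on $\hat{E}[2]$, while you get it from the identification of $\hat{E}[2^{\infty}]$ with an unramified twist $\mu_{2^{\infty}}\otimes\lambda$ (in substance the Weil pairing together with unramifiedness of $T_2(\bar{E})$, i.e.\ the paper's Lemma \ref{lem:red:def} in different clothing) followed by an inertia/Frobenius invariant computation; both are valid and yield the same five candidate extensions $G_{1,2}, G_{1,4}, G_{1,8}, G_{2,1}, G_{2,2}$. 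For these five ordinary groups the realization step is unavoidable computational content, exactly as you anticipate: the paper settles it with Algorithm \ref{alg:torsion} applied to the conductor-$15$ curves 15a5, 15a7, 15a4, 15a2, 15a1.
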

For the proof of the theorem,
it suffices to show (1) and (2).

\subsubsection{The case of ordinary reduction}
We show Theorem \ref{E(Qp):ord:p=2} (1).
Let $E$ be an elliptic curve over $\mbb{Q}_2$
with good reduction.
Suppose that $\bar{E}$ is ordinary. 
Then torsion subgroup of $\hat{E}(\mbb{Q}_2)$ have order at most 2 by \cite[Chapter IV, Proposition 3.2 and Theorem 6.1]{Sil09} 
and those of  $\bar{E}(\mbb{F}_2)$ have order at most 5 by the Hasse bound.
Moreover, since $\hat{E}[2]$ and $\bar{E}[2]$ are cyclic of order 2, 
$G_{\mbb{Q}_2}$ acts trivially on them.
Thus  $\hat{E}(\mbb{Q}_2)_{\mrm{tor}}\simeq \mbb{Z}/2\mbb{Z}$ 
and $\bar{E}(\mbb{F}_2)$ is isomorphic to either 
$\mbb{Z}/2\mbb{Z}$ or $\mbb{Z}/4\mbb{Z}$.  
Since we have an exact sequence
$0\to \hat{E}(\mbb{Q}_2)_{\mrm{tor}}\to 
E(\mbb{Q}_2)_{\mrm{tor}} \to \bar{E}(\mbb{F}_2)$
of modules,
the group 
$E(\mbb{Q}_2)_{\mrm{tor}}$ is isomorphic to one of the following groups:
$$
G_{1,2}, G_{1,4}, G_{1,8}, G_{2,1}, G_{2,2}.
$$
In fact, it follows from MAGMA calculation with Algorithm \ref{alg:torsion} 
that, for each  $G_{m,k}$ appearing above, 
there exists an elliptic curve $E$ over $\mathbb{Q}_2$ 
with good ordinary reduction 
such that $E(\mathbb{Q}_2)_{\mrm{tor}}$ 
is isomorphic to $G_{m,k}$. For example, 
\begin{itemize}
\item 
$E=$\href{https://www.lmfdb.org/EllipticCurve/Q/15/a/1}{15.a5} satisfies $E(\mbb{Q}_2)_{\mrm{tor}}\simeq G_{1,2}$,
\item 
$E=$\href{https://www.lmfdb.org/EllipticCurve/Q/15/a/4}{15.a7} satisfies $E(\mbb{Q}_2)_{\mrm{tor}}\simeq G_{1,4}$,
\item 
$E=$\href{https://www.lmfdb.org/EllipticCurve/Q/15/a/8}{15.a4} satisfies $E(\mbb{Q}_2)_{\mrm{tor}}\simeq G_{1,8}$,
\item 
$E=$\href{https://www.lmfdb.org/EllipticCurve/Q/15/a/2}{15.a2} satisfies $E(\mbb{Q}_2)_{\mrm{tor}}\simeq G_{2,1}$,
\item 
$E=$\href{https://www.lmfdb.org/EllipticCurve/Q/15/a/5}{15.a1} satisfies $E(\mbb{Q}_2)_{\mrm{tor}}\simeq G_{2,2}$.
\end{itemize}
This finishes a proof of Theorem \ref{E(Qp):ord:p=2} (1).

\subsubsection{The case of supersingular reduction}
We show Theorem \ref{E(Qp):ord:p=2} (2).
Let $E$ be an elliptic curve over $\mbb{Q}_2$
with good reduction.
Suppose that $\bar{E}$ is supersingular. 
Note that $E(\mbb{Q}_2)[2]=0$ since $E[2]$
is irreducible as a $G_{\mbb{Q}_2}$-module
(cf.\ \cite[Section 1.12, Proposition 12]{Ser72}).
Hence, it follows from \eqref{p'-part} that 
the reduction map induces an isomorphism
$E(\mbb{Q}_2)_{\mrm{tor}}\simeq \bar{E}(\mbb{F}_2)$.
By the Hasse bound, we have 
$\#\bar{E}(\mbb{F}_2)\in \{1,3,5\}$.
Thus, $E(\mbb{Q}_2)_{\mrm{tor}}$ is isomorphic to 
one of $G_{1,1}$, $G_{1,3}$ and $G_{1,5}$.
In fact, it follows from MAGMA calculation with Algorithm \ref{alg:torsion} 
that, for each  $G_{m,k}$ appearing above, 
there exists an elliptic curve $E$ over $\mathbb{Q}_2$ 
with good supersingular reduction 
such that $E(\mathbb{Q}_2)_{\mrm{tor}}$ 
is isomorphic to $G_{m,k}$. For example, 
\begin{itemize}
\item 
$E=$\href{https://www.lmfdb.org/EllipticCurve/Q/67/a/1}{67.a1} satisfies $E(\mbb{Q}_2)_{\mrm{tor}}\simeq G_{1,1}$,
\item 
$E=$\href{https://www.lmfdb.org/EllipticCurve/Q/19/a/2}{19.a1} satisfies $E(\mbb{Q}_2)_{\mrm{tor}}\simeq G_{1,3}$,
\item 
$E=$\href{https://www.lmfdb.org/EllipticCurve/Q/11/a/2}{11.a1} satisfies $E(\mbb{Q}_2)_{\mrm{tor}}\simeq G_{1,5}$.
\end{itemize}
This finishes a proof of Theorem \ref{E(Qp):ord:p=2} (2).

\section{Group structures of 
$E(\mbb{Q}_p(\mu_{p^n}))_{\mrm{tor}}$}
\label{Section:E(Qp(mu))}

The aim of this section is to prove Theorem 
\ref{E(Qp(mu)):good} and Theorem \ref{E(Q2(mu4)):good},
which gives the classification of the groups 
appearing as $E(\mbb{Q}_p(\mu_{p^{\infty}}))_{\mrm{tor}}$
for some elliptic curve $E$ over $\mbb{Q}_p$
with good reduction.
We begin with a proof of Theorem 
\ref{E(Qp(mu)):good} (1).

\begin{proof}[Proof of Theorem \ref{E(Qp(mu)):good} (1)]
By \cite[Section 1.12, Proposition 12]{Ser72},
we know that $E[p]$ is irreducible as $G_{\mbb{Q}_p}$-modules.
If we assume that $E(\mbb{Q}_p(\mu_{p^{\infty}}))[p]$ is not zero,
then it follows from the irreducibility that we have $E(\mbb{Q}_p(\mu_{p^{\infty}}))[p]=E[p]$.
This shows that $\mbb{Q}_p(E[p])$ is a subfield of 
$\mbb{Q}_p(\mu_{p^{\infty}})$.
Thus the prime-to-$p$ part of the ramification index of 
$\mbb{Q}_p(E[p])/\mbb{Q}_p$ must be a divisor of $p-1$.
However, by \cite[Section 1.12, Proposition 12]{Ser72} again, 
the ramification index of $\mbb{Q}_p(E[p])/\mbb{Q}_p$ is $p^2-1$.
This is a contradiction. Hence we obtain $E(\mbb{Q}_p(\mu_{p^{\infty}}))[p]=0$. 
In particular, we have 
$E(\mbb{Q}_p(\mu_{p^{\infty}}))[p^{\infty}]
=E(\mbb{Q}_p)[p^{\infty}]\, (=0)$.
On the other hand, it follows from the N\'eron-Ogg-Shafarevich criterion 
that the prime-to-$p$ parts of 
$E(\mbb{Q}_p(\mu_{p^{\infty}}))_{\mrm{tor}}$
 and $E(\mbb{Q}_p)_{\mrm{tor}}$
 coincides with each other.
Thus we conclude 
$E(\mbb{Q}_p(\mu_{p^{\infty}}))_{\mrm{tor}}=E(\mbb{Q}_p)_{\mrm{tor}}$
as desired. 
\end{proof}

For the arguments below, We use the following lemma. 

\begin{lemma}
\label{lem:red:def}
Let $E_{/\mbb{Q}_p}$ be an elliptic curve with good ordinary reduction. 
Let $\alpha$ be the non-unit root of $T^2-a_p(E)T+p=0$
and denote by $\chi_{\alpha}\colon G_{\mbb{Q}_p}\to \mbb{Z}_p^{\times}$
 the Lubin-Tate character\footnote{For the definition of Lubin-Tate characters, 
 see Appendix A.4 of Chapter III of \cite{Ser98}} 
 associated with $\alpha$.
Then, the $G_{\mbb{Q}_p}$-action on the $p$-adic Tate module $V_p(\hat{E})$ of $\hat{E}$
is given by $\chi_{\alpha}$.
\end{lemma}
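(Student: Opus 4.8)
The plan is to realize the character on $V_p(\hat E)$ as a line inside the two-dimensional representation $V_p(E)$ and to pin it down by its determinant together with the unramified quotient, and then to match the resulting character with $\chi_\alpha$ by a direct computation through local class field theory. Since good ordinary reduction is what forces $\alpha$ to be the non-unit root, it has $p$-adic valuation $1$ and hence is a uniformizer of $\mbb{Z}_p$, so that $\chi_\alpha$ is defined; write $\beta$ for the unit root, so that $\alpha\beta=p$ and $\alpha/p=\beta^{-1}\in\mbb{Z}_p^{\times}$.

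First I would recall the formal--\'etale exact sequence
\[
0\to V_p(\hat E)\to V_p(E)\to V_p(\bar E)\to 0
\]
of $G_{\mbb{Q}_p}$-representations coming from the kernel of reduction, in which $V_p(\hat E)$ is the sub and the quotient $V_p(\bar E)$ is unramified by the N\'eron--Ogg--Shafarevich criterion. Thus the action on the line $V_p(\hat E)$ is a character $\psi\colon G_{\mbb{Q}_p}\to\mbb{Z}_p^{\times}$, and the goal becomes the identity $\psi=\chi_\alpha$. Two inputs determine $\psi$ completely. The quotient character $\psi_{\mrm{et}}$ on $V_p(\bar E)$ is unramified and, ordinariness being exactly the statement that Frobenius acts on the \'etale part of the reduction as a unit, a geometric Frobenius acts on it by the unit root $\beta$. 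The Weil pairing gives $\det V_p(E)=\chi_{\mrm{cyc}}$, so $\psi=\chi_{\mrm{cyc}}\cdot\psi_{\mrm{et}}^{-1}$; explicitly, $\psi$ coincides with $\chi_{\mrm{cyc}}$ on the inertia subgroup $I_{\mbb{Q}_p}$ and, relative to $\chi_{\mrm{cyc}}$, sends a geometric Frobenius to $\beta^{-1}=\alpha/p$.

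It then remains to describe $\chi_\alpha$ in the same terms. The key facts are that $\chi_{\mrm{cyc}}$ is itself the Lubin--Tate character attached to the uniformizer $p$ (as $\hat{\mbb{G}}_m$ is the associated Lubin--Tate group), and that for any two uniformizers the Lubin--Tate characters have the \emph{same} restriction to $I_{\mbb{Q}_p}$ while differing on Frobenius by the ratio of the uniformizers. Writing $\alpha=p\cdot(\alpha/p)$ and evaluating $\chi_\alpha$ through the reciprocity map, I would conclude that $\chi_\alpha$ agrees with $\chi_{\mrm{cyc}}$ on $I_{\mbb{Q}_p}$ and, relative to $\chi_{\mrm{cyc}}$, sends a geometric Frobenius to $\alpha/p$. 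This is precisely the description of $\psi$ obtained above, and since $\mbb{Q}_p^{\times}=p^{\mbb{Z}}\times\mbb{Z}_p^{\times}$ these two data determine a continuous character uniquely, so $\psi=\chi_\alpha$.

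The main obstacle I expect is bookkeeping of normalizations rather than anything structural: one must fix a single normalization of the reciprocity map (arithmetic versus geometric Frobenius) and of the sign conventions in both $\chi_{\mrm{cyc}}$ and Serre's $\chi_\alpha$, and then carry it consistently through every evaluation, since an inverse introduced in one place but not another would spuriously interchange $\alpha$ and $\beta$. The two structural inputs — unramifiedness of the \'etale quotient with unit-root Frobenius (from ordinariness and N\'eron--Ogg--Shafarevich) and $\det V_p(E)=\chi_{\mrm{cyc}}$ (from the Weil pairing) — are standard. As a consistency check, note that the character $\psi$ so determined depends only on $a_p(E)$, hence only on $\bar E$; it therefore suffices to verify the statement for any single lift of $\bar E$, for instance the canonical lift, whose formal group carries a genuine Lubin--Tate structure for $\alpha$, which gives an alternative and more conceptual route to the same identity.
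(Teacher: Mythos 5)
Your proof is correct, but it follows a genuinely different route from the paper's. Both arguments begin identically: the quotient character on $V_p(\bar E)$ is unramified, the determinant of $V_p(E)$ is cyclotomic by the Weil pairing, so the character on $V_p(\hat E)$ agrees with $\chi_{\mrm{cyc}}$ on inertia, and it remains to pin down one further value. The paper does this crystalline-theoretically: it identifies $T^2-a_p(E)T+p$ with the characteristic polynomial of $\vphi$ on $D_{\mrm{cris}}(V_p(E)^{\vee})$, invokes \cite[Proposition B.4]{Con11} to get $\mrm{det}(T-\vphi\mid D_{\mrm{cris}}(\mbb{Q}_p(\chi^{-1})))=\chi(\pi)\cdot\pi$ for any uniformizer $\pi$, concludes $\chi(\pi)\pi=\alpha$ by a valuation comparison, hence $\chi(\alpha)=1$, and then $\chi=\chi_{\alpha}$ since the two characters already agree on inertia. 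You avoid $p$-adic Hodge theory altogether, replacing it with the classical (Deuring) fact that Frobenius acts on the Tate module of an ordinary curve over $\mbb{F}_p$ by the unit root $\beta$, together with the explicit comparison of the Lubin--Tate characters attached to the two uniformizers $p$ and $\alpha$. Your route is more elementary and self-contained in local class field theory; the paper's route is the one that scales to general crystalline characters. One warning about the caveat you yourself raised: under the standard conventions it is the \emph{arithmetic} Frobenius that acts by $\beta$ on $V_p(\bar E)$ (its action on $\overline{\mbb{F}}_p$-points is exactly that of the Frobenius isogeny), and correspondingly $\chi_{\alpha}\chi_{\mrm{cyc}}^{-1}$ takes the value $\alpha/p$ on arithmetic, not geometric, Frobenius. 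Each of your two Frobenius statements is therefore inverted relative to the usual convention; but since you made the same inversion on both sides of the comparison, the two discrepancies cancel and the identity $\psi=\chi_{\alpha}$ you deduce is unaffected---precisely the bookkeeping danger you flagged, so in a polished write-up you should fix a normalization once and state both evaluations with arithmetic Frobenius.
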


\begin{proof}
Let $\chi\colon G_{\mbb{Q}_p}\to \mbb{Z}_p^{\times}$ be the character obtained by the 
$G_{\mbb{Q}_p}$-action on $V_p(\hat{E})$
and $\phi\colon G_{\mbb{Q}_p}\to \mbb{Z}_p^{\times}$ be the character obtained by the 
$G_{\mbb{Q}_p}$-action on $V_p(\bar{E})$.
Put $T^2-a_p(E)T+p=f_E(T)$.
For any crystalline $\mbb{Q}_p$-representation $V$ of $G_{\mbb{Q}_p}$,
let $D_{\mrm{cris}}(V)=(B_{\mrm{cirs}}\otimes_{\mbb{Q}_p} V)^{G_{\mbb{Q}_p}}$ 
be the Fontaine's filtered $\vphi$-module\footnote{For the basic notion of 
{\it p}-adic Hodge theory, it is helpful for the reader to refer \cite{Fon94a} and \cite{Fon94b}.}.
By $p$-adic Hodge theory, it is known that $f_E(T)$ coincides with the characteristic polynomial of 
the $\vphi$-module  $D_{\mrm{cris}}(V_p(E)^{\vee})$, that is, 
$f_E(T)=\mrm{det}(T-\vphi\mid D_{\mrm{cris}}(V_p(E)^{\vee}))$. 
Here, $\vee$ stands for the dual representation.
Moreover, this coincides with the products of  the characteristic polynomials of 
$D_{\mrm{cris}}(\mbb{Q}_p(\chi^{-1}))$
and $D_{\mrm{cris}}(\mbb{Q}_p(\phi^{-1}))$.
Since $\chi$ restricted to the inertia $I_{\mbb{Q}_p}$ coincides with the $p$-adic cyclotomic character,
for any choice of a uniformizer $\pi$ of $\mbb{Q}_p$, 
it follows from \cite[Proposition B.4]{Con11} that 
$\mrm{det}(T-\vphi\mid D_{\mrm{cris}}(\mbb{Q}_p(\chi^{-1})))
=\chi(\pi)\cdot \pi$,
which is independent of the choice of $\pi$
(here, we regard $\chi$ as a character of $\mbb{Q}_p^{\times}$
via the local reciprosity map).
Since $\chi(\pi)\cdot \pi$ has a postive $p$-adic valuation, we have 
$\chi(\pi)\cdot \pi=\alpha$ for any $\pi$.
By choosing  $\alpha$ as $\pi$, 
we have $\chi(\alpha)=1$. 
Since we have $\chi=\chi_{\alpha}$ on $I_{\mbb{Q}_p}$,
we find $\chi=\chi_{\alpha}$.
\end{proof}

\subsection{The case $p\ge 3$ }

We show Theorem \ref{E(Qp(mu)):good} (2).

\begin{lemma}
\label{p-bound}
Assume $p\ge 3$. 
Let $E$ be an elliptic curve over $\mbb{Q}_p$ with good ordinary reduction
and $\hat{E}$ the formal group associated with $E$.
Then, it holds 
$\# E(\mbb{Q}_p(\mu_{p^{\infty}}))[p^{\infty}]\le p^2$
and 
$\# \hat{E}(\mbb{Q}_p(\mu_{p^{\infty}}))[p^{\infty}]\le p$.
\end{lemma}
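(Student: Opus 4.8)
The plan is to analyze the two graded pieces of the $p$-divisible group of $E$ separately and then assemble the two bounds. Write $L:=\mbb{Q}_p(\mu_{p^{\infty}})$, so that $G_L=\ker(\chi_{\mrm{cyc}})$ where $\chi_{\mrm{cyc}}$ is the cyclotomic character. For good ordinary reduction the connected-\'etale exact sequence gives an exact sequence of $G_{\mbb{Q}_p}$-modules
\[
0\to \hat{E}[p^{\infty}]\to E[p^{\infty}]\to \bar{E}[p^{\infty}]\to 0,
\]
where $\bar{E}[p^{\infty}]$ denotes the \'etale quotient arising from the reduction. Since $\hat{E}(L)[p^{\infty}]=\hat{E}[p^{\infty}]^{G_L}$ and $E(L)[p^{\infty}]=E[p^{\infty}]^{G_L}$, left-exactness of $G_L$-invariants gives $\#E(L)[p^{\infty}]\le \#\hat{E}(L)[p^{\infty}]\cdot \#\bar{E}[p^{\infty}]^{G_L}$, so the whole statement reduces to bounding the invariants of the two rank-one pieces by $p$ each.

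First I would treat the formal part. By Lemma~\ref{lem:red:def} we have $\hat{E}[p^{\infty}]\cong \mbb{Q}_p/\mbb{Z}_p(\chi_{\alpha})$ as $G_{\mbb{Q}_p}$-modules, and $\chi_{\alpha}$ restricted to $I_{\mbb{Q}_p}$ equals $\chi_{\mrm{cyc}}$. Hence $\chi_{\alpha}\chi_{\mrm{cyc}}^{-1}$ is unramified, and since $\chi_{\mrm{cyc}}$ is trivial on $G_L$, the restriction $\chi_{\alpha}|_{G_L}$ is unramified. The extension $L/\mbb{Q}_p$ is totally ramified, so the unramified quotient of $G_L$ is the full $\hat{\mbb{Z}}$ topologically generated by a Frobenius $\sigma$; therefore $\hat{E}[p^{\infty}]^{G_L}=\ker\big(\chi_{\alpha}(\sigma)-1\colon \mbb{Q}_p/\mbb{Z}_p\to \mbb{Q}_p/\mbb{Z}_p\big)$, of order $p^{v_p(\chi_{\alpha}(\sigma)-1)}$. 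A short local class field theory computation, exactly as in the proof of Lemma~\ref{lem:red:def}, identifies $\chi_{\alpha}(\sigma)$ with $\beta^{-1}$, where $\beta$ is the unit root of $T^2-a_p(E)T+p$. The same reasoning applied to the quotient (whose character is $\chi_{\mrm{cyc}}\chi_{\alpha}^{-1}$, again unramified on $G_L$ with Frobenius value $\beta$) gives $\#\bar{E}[p^{\infty}]^{G_L}=p^{v_p(\beta-1)}$. In both cases the order equals $p^{v_p(\beta-1)}$.

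Thus everything comes down to the arithmetic claim $v_p(\beta-1)\le 1$, which I expect to be the crux. Suppose $\beta\equiv 1\ \mrm{mod}\ p^2$; substituting into $\beta^2-a_p(E)\beta+p=0$ yields $a_p(E)\equiv 1+p\ \mrm{mod}\ p^2$. For $p\ge 5$ the Hasse bound $|a_p(E)|\le 2\sqrt{p}<p$ forces $a_p(E)\in(-p,p)$, and no integer in this range is congruent to $1+p$ modulo $p^2$, a contradiction; for $p=3$, good ordinary reduction leaves only $a_p(E)\in\{\pm1,\pm2\}$, none of which is $\equiv 4\ \mrm{mod}\ 9$. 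The same Hasse bound rules out $\beta=1$ (that would require $a_p(E)=1+p$), so $v_p(\beta-1)\in\{0,1\}$. This gives $\#\hat{E}(L)[p^{\infty}]=p^{v_p(\beta-1)}\le p$ and $\#\bar{E}[p^{\infty}]^{G_L}\le p$, and combining via the exact sequence yields $\#E(L)[p^{\infty}]\le p^2$. The main obstacle is precisely pinning down $v_p(\beta-1)$: once the Galois actions on both pieces are recognized as unramified twists over $L$, the remainder is the elementary Hasse-bound estimate above.
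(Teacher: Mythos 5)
Your proof is correct, and its overall skeleton coincides with the paper's: decompose $E(\mbb{Q}_p(\mu_{p^{\infty}}))[p^{\infty}]$ via the formal/\'etale exact sequence, bound each rank-one piece by $p$, and reduce the crux to $v_p(\beta-1)\le 1$ for the unit root $\beta$ of $T^2-a_p(E)T+p$, obtained from the quadratic equation together with the Hasse bound (the paper via the identity $(\beta-1)(\beta-a_p(E)+1)=-\#\bar{E}(\mbb{F}_p)$, you via a congruence mod $p^2$ with a small case split at $p=3$; these are equivalent). Where you genuinely depart from the paper is the treatment of the formal part: the paper bounds $\mrm{Min}\{v_p(\chi(\sigma)-1)\mid \sigma\in G_{\mbb{Q}_p(\mu_{p^{\infty}})}\}$ by $v_p(\chi(p^{-1})-1)=v_p(\beta-1)$ by pushing the Lubin--Tate character through the local reciprocity map and citing \cite[Proposition 2.1]{Oze24-2}, whereas you observe that $\chi_{\alpha}\chi_{\mrm{cyc}}^{-1}$ is unramified, so $\chi_{\alpha}$ restricted to $G_{\mbb{Q}_p(\mu_{p^{\infty}})}$ factors through the unramified quotient (total ramification of the cyclotomic tower enters here), and the invariants are then computed by a single Frobenius element whose $\chi_{\alpha}$-value is $\beta^{\pm 1}$ by the same class-field-theory computation as in Lemma \ref{lem:red:def}. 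This buys self-containedness (no external citation) and in fact yields the exact orders $p^{v_p(\beta-1)}$ of both the formal and \'etale invariants, not just upper bounds; the paper's route is shorter on the \'etale side, where it simply notes $\#\bar{E}(\mbb{F}_p)[p^{\infty}]\le p$ directly from Hasse. The normalization ambiguity of the reciprocity map that you leave implicit is harmless, since $v_p(\beta-1)=v_p(\beta^{-1}-1)$.
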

\begin{proof}
Consider an exact sequence
$$
0\to \hat{E}(\mbb{Q}_p(\mu_{p^{\infty}}))[p^{\infty}] 
\to E(\mbb{Q}_p(\mu_{p^{\infty}}))[p^{\infty}]
\to \bar{E}(\mbb{F}_p)[p^{\infty}]
$$
of $G_{\mbb{Q}_p}$-modules.
By the Hasse bound,
the order of $\bar{E}(\mbb{F}_p)[p^{\infty}]$
is at most $p$ (note that $p$ is now odd).
Thus it suffices to check that any element of 
$\hat{E}(\mbb{Q}_p(\mu_{p^{\infty}}))[p^{\infty}]$
is killed by $p$.
Denote by $\alpha$ the non-unit root of 
the equation $T^2-a_p(E)T+p=0$.
Then, $\alpha$ is a uniformizer of $\mbb{Q}_p$
and the $G_{\mbb{Q}_p}$-action on 
the Tate module $T_p(\hat{E})$ of $\hat{E}$  
is given by the Lubin-Tate character 
$\chi\colon G_{\mbb{Q}_p}\to \mbb{Z}_p^{\times}$ associated with $\alpha$
by Lemma \ref{lem:red:def}.
Now take any $P\in \hat{E}(\mbb{Q}_p(\mu_{p^{\infty}}))[p^{\infty}]$.
Then $(\chi(\sigma)-1)P=0$ for every $\sigma\in G_{\mbb{Q}_p(\mu_{p^{\infty}})}$.
By abuse of notation, we also denote by $\chi$
the composite of $\chi$ (considered as a character of $G_{\mbb{Q}_p}^{\mrm{ab}}$)
and the local reciprocity map $\mbb{Q}_p^{\times}\to G_{\mbb{Q}_p}^{\mrm{ab}}$.
Here, $G_{\mbb{Q}_p}^{\mrm{ab}}$ is the maximal abelian quotient of 
$G_{\mbb{Q}_p}$.
If we denote by $v_p$ the $p$-adic valuation normalized by $v_p(p)=1$, 
then we have 
\begin{equation}
\mrm{Min}\{v_p(\chi(\sigma)-1)\mid \sigma\in G_{\mbb{Q}_p(\mu_{p^{\infty}})}\}
\le v_p(\chi(p^{-1})-1)
= v_p(p\alpha^{-1}-1)
\end{equation}
by \cite[Proposition 2.1]{Oze24-2}.
Note that $\beta:=p\alpha^{-1}$ 
is the unit root of the equation $T^2-a_p(E)T+p=0$.
Since $0=\beta^2-a_p(E)\beta+p=
(\beta-1)(\beta-a_p(E)+1)+\#\bar{E}(\mbb{F}_p)$,
we have 
\begin{equation}
v_p(p\alpha^{-1}-1)\le v_p(\#\bar{E}(\mbb{F}_p))\le 1
\end{equation}
by the Hasse bound.
Therefore, we obtain $pP=0$
for every $P\in \hat{E}(\mbb{Q}_p(\mu_{p^{\infty}}))[p^{\infty}]$
as desired.
\end{proof}

\begin{proposition}
\label{Main:cycl}
Assume $p\ge 3$. Let $E$ be an elliptic curve over $\mbb{Q}_p$ with good ordinary reduction. Then, we have 

\begin{align*}
E(\mbb{Q}_p(\mu_{p^{\infty}}))[p^{\infty}]
\simeq 
\left\{ \,
\begin{aligned}
& 0 & if & \ \mbox{$\bar{E}(\mbb{F}_p)[p]=0$},\\
& \mbb{Z}/p\mbb{Z} & if & \ \mbox{$\bar{E}(\mbb{F}_p)[p]\not=0$ and 
$E(\mbb{Q}_p)[p]=0$},\\
& \mbb{Z}/p\mbb{Z}\times \mbb{Z}/p\mbb{Z} & if & \ \mbox{$\bar{E}(\mbb{F}_p)[p]\not=0$ and $E(\mbb{Q}_p)[p]\not=0$}.
\end{aligned}
\right.
\end{align*}
Furthermore, if $\bar{E}(\mbb{F}_p)[p]\not=0$,
then the minimum fields of definition of 
$E(\mbb{Q}_p(\mu_{p^{\infty}}))[p^{\infty}]$ and 
$E(\mbb{Q}_p(\mu_{p^{\infty}}))_{\mrm{tor}}$ are  
$\mbb{Q}_p(\mu_p)$.
\if0
, that is, 
$\mbb{Q}_p(E(\mbb{Q}_p(\mu_{p^{\infty}}))[p^{\infty}])
=\mbb{Q}_p(E(\mbb{Q}_p(\mu_{p^{\infty}}))_{\mrm{tor}})
=\mbb{Q}_p(\mu_p).$
\fi
\end{proposition}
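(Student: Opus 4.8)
The plan is to analyse the three subquotients of the $p$-divisible group separately, using the exact sequence of $G_{\mbb{Q}_p}$-modules
$0\to \hat E(\mbb{Q}_p(\mu_{p^\infty}))[p^\infty]\to E(\mbb{Q}_p(\mu_{p^\infty}))[p^\infty]\to \bar E(\mbb{F}_p)[p^\infty]$
already exploited in Lemma \ref{p-bound} (the residue field of the totally ramified extension $\mbb{Q}_p(\mu_{p^\infty})$ is $\mbb{F}_p$). First I would pin down the outer terms. For the étale quotient the Hasse bound gives $\bar E(\mbb{F}_p)[p^\infty]=\bar E(\mbb{F}_p)[p]$, which is $\mbb{Z}/p\mbb{Z}$ or $0$ according as $\bar E(\mbb{F}_p)[p]\neq 0$ or not. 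For the formal part, Lemma \ref{lem:red:def} identifies the $G_{\mbb{Q}_p}$-action on $T_p(\hat E)$ with $\chi_\alpha$; writing $\chi_\alpha=\chi_{\mrm{cyc}}\cdot\psi$ with $\psi$ unramified and $\psi(\mrm{Frob})=\beta^{-1}$, where $\beta$ is the unit root of $T^2-a_p(E)T+p$, and using that $G_{\mbb{Q}_p(\mu_{p^\infty})}=\ker(\chi_{\mrm{cyc}})$ surjects onto the unramified quotient $\hat{\mbb{Z}}$, one sees that $\chi_\alpha$ is trivial modulo $p$ on $G_{\mbb{Q}_p(\mu_{p^\infty})}$ iff $\beta\equiv 1\bmod p$, i.e. iff $\bar E(\mbb{F}_p)[p]\neq 0$ (equivalently $a_p(E)\equiv 1\bmod p$). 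Together with the bound $\le p$ of Lemma \ref{p-bound}, this shows $\hat E(\mbb{Q}_p(\mu_{p^\infty}))[p^\infty]$ is $\mbb{Z}/p\mbb{Z}$ exactly when $\bar E(\mbb{F}_p)[p]\neq 0$, and $0$ otherwise; in particular the case $\bar E(\mbb{F}_p)[p]=0$ is already settled.

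When $\bar E(\mbb{F}_p)[p]\neq 0$ both outer terms are $\mbb{Z}/p\mbb{Z}$, so $E(\mbb{Q}_p(\mu_{p^\infty}))[p^\infty]$ has order $p$ or $p^2$ and the whole issue is whether the étale line lifts. In this case $\beta\equiv 1\bmod p$ forces $\hat E[p]\cong\mu_p$ and $\bar E[p]$ to be the trivial $G_{\mbb{Q}_p}$-module, so $E[p]$ is an extension $0\to\mu_p\to E[p]\to\mbb{Z}/p\mbb{Z}\to 0$ of class $c\in H^1(G_{\mbb{Q}_p},\mu_p)$, and a standard boundary computation identifies the condition $E(\mbb{Q}_p)[p]\neq 0$ with $c=0$. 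I would then compute the connecting map $\delta\colon \bar E(\mbb{Q}_p(\mu_{p^\infty}))[p^\infty]\to H^1(G_{\mbb{Q}_p(\mu_{p^\infty})},\hat E[p^\infty])$ of the exact sequence: its value on a generator is $\iota_*(\mrm{res}(c))$, where $\iota_*$ is induced by $\hat E[p]\hookrightarrow\hat E[p^\infty]$ and $\mrm{res}$ denotes restriction to $G_{\mbb{Q}_p(\mu_{p^\infty})}$. From the Kummer sequence of $\hat E[p^\infty]$ and the explicit shape of $\chi_\alpha$ on $G_{\mbb{Q}_p(\mu_{p^\infty})}$, the kernel of $\iota_*$ is precisely the line of unramified homomorphisms $G_{\mbb{Q}_p(\mu_{p^\infty})}\to\mbb{F}_p$. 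Hence the étale line lifts, giving order $p^2$, if and only if $\mrm{res}(c)$ is unramified over $\mbb{Q}_p(\mu_{p^\infty})$.

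If $E(\mbb{Q}_p)[p]\neq 0$ (so $c=0$) the extension splits over $\mbb{Q}_p$, $E[p]\cong\mu_p\oplus\mbb{Z}/p\mbb{Z}$ is pointwise rational over $\mbb{Q}_p(\mu_p)$, and the bound $p^2$ of Lemma \ref{p-bound} forces $E(\mbb{Q}_p(\mu_{p^\infty}))[p^\infty]=E[p]\cong(\mbb{Z}/p\mbb{Z})^2$. The delicate case is $E(\mbb{Q}_p)[p]=0$, i.e. $c\neq 0$, where the claim $\mbb{Z}/p\mbb{Z}$ requires excluding a cyclic group of order $p^2$; by the previous paragraph this amounts to showing $\mrm{res}(c)$ is \emph{ramified} over $\mbb{Q}_p(\mu_{p^\infty})$, and this is the step I expect to be the main obstacle. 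I would resolve it using the identity $F^{\mrm{unr}}=\mbb{Q}_p^{\mrm{ab}}$ for $F:=\mbb{Q}_p(\mu_{p^\infty})$: writing $c=[a]$ with $a\in\mbb{Q}_p^\times\setminus(\mbb{Q}_p^\times)^p$, the field $\mbb{Q}_p(\mu_p,a^{1/p})/\mbb{Q}_p$ is non-abelian for $p\ge 3$ (the Kummer pairing makes $\mrm{Gal}(\mbb{Q}_p(\mu_p)/\mbb{Q}_p)$ act on $\mrm{Gal}(\mbb{Q}_p(\mu_p,a^{1/p})/\mbb{Q}_p(\mu_p))\cong\mbb{Z}/p\mbb{Z}$ through the non-trivial character $\omega$), so $a^{1/p}\notin\mbb{Q}_p^{\mrm{ab}}=F^{\mrm{unr}}$; therefore $F(a^{1/p})/F$ is a non-trivial, non-unramified, hence ramified, extension, so $\mrm{res}(c)$ is ramified and $\delta\neq 0$, yielding $E(\mbb{Q}_p(\mu_{p^\infty}))[p^\infty]=\mbb{Z}/p\mbb{Z}$.

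Finally, for the field of definition I would observe that the prime-to-$p$ torsion is unramified and hence already rational over $\mbb{Q}_p$, while in both nonzero cases the $p$-part is built from $\hat E[p]\cong\mu_p$ together with (in the split case) a $\mbb{Q}_p$-rational étale point. Since $\hat E(\mbb{Q}_p)[p]=0$ for $p\ge 3$ but the same Lubin-Tate computation shows the formal $\mbb{Z}/p\mbb{Z}$ is captured already over $\mbb{Q}_p(\mu_p)$, and $\mu_p$ is rational over $\mbb{Q}_p(\mu_p)$ but not over $\mbb{Q}_p$, the minimal field of definition of both $E(\mbb{Q}_p(\mu_{p^\infty}))[p^\infty]$ and $E(\mbb{Q}_p(\mu_{p^\infty}))_{\mrm{tor}}$ comes out to be exactly $\mbb{Q}_p(\mu_p)$. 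The only genuinely delicate ingredient is the ramification of $\mrm{res}(c)$, i.e. the input $F^{\mrm{unr}}=\mbb{Q}_p^{\mrm{ab}}$ combined with the non-abelian descent argument; the remaining steps are bookkeeping with the connected–étale exact sequence and the Lubin-Tate description of $\chi_\alpha$.
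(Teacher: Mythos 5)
Your argument is correct in substance, and on two of the three cases it coincides with the paper's proof: for $\bar E(\mbb{F}_p)[p]=0$ you use the same unramified-character argument, and for $\bar E(\mbb{F}_p)[p]\neq 0$, $E(\mbb{Q}_p)[p]\neq 0$ the same splitting $E[p]\simeq \mbb{F}_p(1)\oplus\mbb{F}_p$ capped by Lemma \ref{p-bound}; the field-of-definition discussion also agrees. Where you genuinely diverge is the delicate case $\bar E(\mbb{F}_p)[p]\neq 0$, $E(\mbb{Q}_p)[p]=0$. The paper excludes the two groups of order $p^2$ by two separate arguments: $(\mbb{Z}/p\mbb{Z})^2$ is ruled out by inflation--restriction (the kernel of $H^1(\mbb{Q}_p,\mbb{F}_p(1))\to H^1(\mbb{Q}_p(\mu_{p^n}),\mbb{F}_p(1))$ vanishes, so a splitting over a finite cyclotomic layer would descend to $\mbb{Q}_p$), and $\mbb{Z}/p^2\mbb{Z}$ is ruled out by the commutative diagram together with Lemma \ref{Z/p2Z}, which would force $\mbb{F}_p(1)\simeq\mbb{F}_p$ as $G_{\mbb{Q}_p}$-modules. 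You instead compute a single obstruction: over $F=\mbb{Q}_p(\mu_{p^{\infty}})$ the connecting map of the connected--\'etale sequence sends the generator of $\bar E(\mbb{F}_p)[p]$ to $\iota_*\mrm{res}_F(c)$, the kernel of $\iota_*$ consists exactly of unramified homomorphisms (here you correctly use Lemma \ref{p-bound} and the fact that $\chi_{\alpha}$ is unramified on $G_F$), and $\mrm{res}_F(c)$ is ramified because $a^{1/p}\notin\mbb{Q}_p^{\mrm{ab}}=F^{\mrm{ur}}$. This treats $(\mbb{Z}/p\mbb{Z})^2$ and $\mbb{Z}/p^2\mbb{Z}$ in one stroke and makes the arithmetic reason transparent (local Kronecker--Weber); the paper's route is more elementary, staying inside finite-layer group cohomology and the purely module-theoretic Lemma \ref{Z/p2Z}, and never needs Kummer theory or class field theory beyond the cyclotomic character.

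One step you must make explicit: your non-abelianness argument for $\mbb{Q}_p(\mu_p,a^{1/p})/\mbb{Q}_p$ tacitly assumes that $a$ is not a $p$-th power in $\mbb{Q}_p(\mu_p)$; otherwise the Kummer layer $\mrm{Gal}(\mbb{Q}_p(\mu_p,a^{1/p})/\mbb{Q}_p(\mu_p))$ is trivial, the group is abelian, and in fact $\mrm{res}_F(c)=0$ would be unramified, so your criterion would output order $p^2$. This nontriviality does not follow formally from $a\notin(\mbb{Q}_p^{\times})^p$; it needs the observation that the kernel of restriction $H^1(\mbb{Q}_p,\mu_p)\to H^1(\mbb{Q}_p(\mu_p),\mu_p)$ equals $H^1(\mrm{Gal}(\mbb{Q}_p(\mu_p)/\mbb{Q}_p),\mu_p)$, which vanishes because $[\mbb{Q}_p(\mu_p):\mbb{Q}_p]=p-1$ is prime to $p$. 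With that one line added your proof is complete; amusingly, this missing line is precisely the prime-to-$p$ shadow of the inflation--restriction vanishing that the paper uses at the full cyclotomic layers.
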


\begin{proof}
Let us first consider the case where $\bar{E}(\mbb{F}_p)[p]=0$.
Let $\chi\colon G_{\mbb{Q}_p}\to \mbb{F}_p^{\times}$ (resp.\ $\psi\colon G_{\mbb{Q}_p}\to \mbb{F}_p^{\times}$) be the characters 
obtained by the $G_{\mbb{Q}_p}$-action on $\hat{E}[p]$
 (resp.\ $\bar{E}[p]$).
 Since $\bar{E}(\mbb{F}_p)[p]=0$,
 $\psi$ is not trivial.
 Since $\psi$ is unramified and $\chi \psi$ coincides with mod $p$
 cyclotomic character,
 we see that $\chi$ is not trivial 
on $G_{\mbb{Q}_p(\mu_{p^{\infty}})}$.
Thus we have $\hat{E}(\mbb{Q}_p(\mu_{p^{\infty}}))[p]=0$.
Now the result immediately follows
from  an exact sequence $0\to \hat{E}[p]\to E[p]\to \bar{E}[p]\to 0$ of $\mbb{F}_p[G_{\mbb{Q}_p}]$-modules.

Next we consider the case where 
$\bar{E}(\mbb{F}_p)[p]\not=0$ and 
$E(\mbb{Q}_p)[p]=0$.
Since $G_{\mbb{Q}_p}$ acts on  $\bar{E}[p]$ trivial, 
we have isomorphisms 
$\hat{E}[p]\simeq \mbb{F}_p(1)$
and $\bar{E}[p]\simeq \mbb{F}_p$
of $G_{\mbb{Q}_p}$-modules.
Thus there exists a natural exact sequence 
\begin{equation}
\label{ext1}
0\to \mbb{F}_p(1)\to E[p]\to \mbb{F}_p\to 0
\end{equation} 
of $\mbb{F}_p[G_{\mbb{Q}_p}]$-modules.
In particular, $E(\mbb{Q}_p(\mu_p))[p]$
contains a submodule $\hat{E}(\mbb{Q}_p(\mu_p))[p]$
of order $p$.
Now we assume that $E(\mbb{Q}_p(\mu_{p^n}))[p]$
is isomorphic to $\mbb{Z}/p\mbb{Z}\times \mbb{Z}/p\mbb{Z}$
for some $n>0$.
Then the extension \eqref{ext1} splits
as $\mbb{F}_p[G_{\mbb{Q}_p(\mu_{p^n})}]$-modules.
Since the kernel of the restriction map 
$H^1(\mbb{Q}_p,\mbb{F}_p(1))\to 
H^1(\mbb{Q}_p(\mu_{p^{n}}),\mbb{F}_p(1))$,
which is isomorphic to $H^1(\mbb{Q}_p(\mu_{p^{n}})/\mbb{Q}_p,\mbb{F}_p(1))$, 
is trivial, 
the extension \eqref{ext1} splits
as $\mbb{F}_p[G_{\mbb{Q}_p}]$-modules.
Thus we have $E(\mbb{Q}_p)[p]=\mbb{Z}/p\mbb{Z}$
but this contradicts the assumption that 
$E(\mbb{Q}_p)[p]=0$.
Hence, we obtain  $E(\mbb{Q}_p(\mu_{p^n}))[p]\simeq \mbb{Z}/p\mbb{Z}$ for any $n>0$.
This shows 
$$
E(\mbb{Q}_p(\mu_{p^{\infty}}))[p]
=E(\mbb{Q}_p(\mu_{p}))[p]
=\hat{E}(\mbb{Q}_p(\mu_p))[p]
$$
and these are isomorphic to $\mbb{F}_p(1)$ 
as $G_{\mbb{Q}_p}$-modules.
It follows from Lemma \ref{p-bound} that 
$E(\mbb{Q}_p(\mu_{p^{\infty}}))[p^{\infty}]$
is isomorphic to either $\mbb{Z}/p\mbb{Z}$ or $\mbb{Z}/p^2\mbb{Z}$
as modules.
Therefore, for the proof, it suffices to show that 
$E(\mbb{Q}_p(\mu_{p^{\infty}}))[p^{\infty}]$
is not isomorphic to $\mbb{Z}/p^2\mbb{Z}$.
Assume that $E(\mbb{Q}_p(\mu_{p^{\infty}}))[p^{\infty}]\simeq \mbb{Z}/p^2\mbb{Z}$.
Consider the following commutative diagram.
\[
\xymatrix{
0 \ar[r] 
& \hat{E}(\mbb{Q}_p(\mu_{p^{\infty}}))[p^{\infty}] \ar[r]
& E(\mbb{Q}_p(\mu_{p^{\infty}}))[p^{\infty}]
 \ar[r] 
& \bar{E}(\mbb{F}_p)[p^{\infty}] \\
0 \ar[r] 
& \hat{E}(\mbb{Q}_p(\mu_{p}))[p]
\ar[r] \ar@{^{(}-_>}[u] 
& E(\mbb{Q}_p(\mu_{p}))[p]
\ar[r]  \ar@{^{(}-_>}[u] 
& \bar{E}(\mbb{F}_p)[p]. 
 \ar@{^{(}-_>}[u] }
\]
By Lemma \ref{p-bound}, 
the left vertical arrow is bijective. 
By the Hasse bound, 
the right vertical arrow is bijective. 
Thus we find that the reduction map 
$E(\mbb{Q}_p(\mu_{p^{\infty}}))[p^{\infty}]
\to \bar{E}(\mbb{F}_p)[p^{\infty}]$ is surjective.
Applying Lemma \ref{Z/p2Z} below with 
$G=G_{\mbb{Q}_p}$ and  $M=E(\mbb{Q}_p(\mu_{p^{\infty}}))[p^{\infty}]$,
we obtain an isomorphism 
$\hat{E}(\mbb{Q}_p(\mu_{p^{\infty}}))[p^{\infty}]
\simeq \bar{E}(\mbb{F}_p)[p^{\infty}]$
of $G_{\mbb{Q}_p}$-modules.
This gives  
$$
\mbb{F}_p(1)\simeq \hat{E}[p]
=\hat{E}(\mbb{Q}_p(\mu_{p^{\infty}}))[p^{\infty}]
\simeq \bar{E}(\mbb{F}_p)[p^{\infty}]
=\bar{E}(\mbb{F}_p)[p]\simeq \mbb{F}_p
$$
as $G_{\mbb{Q}_p}$-modules but this is a contradiction.
Therefore, we obtain $E(\mbb{Q}_p(\mu_{p^{\infty}}))[p^{\infty}]\simeq \mbb{Z}/p\mbb{Z}$
as desired.
Moreover, since we also showed that 
 $E(\mbb{Q}_p(\mu_{p^{\infty}}))[p^{\infty}] 
=\hat{E}(\mbb{Q}_p(\mu_{p}))[p]
\simeq  \mbb{F}_p(1)$ as $G_{\mbb{Q}_p}$-modules,
we find that the definition field of $E(\mbb{Q}_p(\mu_{p^{\infty}}))[p^{\infty}]$
is $\mbb{Q}_p(\mu_p)$.
Note that the fields of definition of $E(\mbb{Q}_p(\mu_{p^{\infty}}))[p^{\infty}]$ 
coincides with that of 
$E(\mbb{Q}_p(\mu_{p^{\infty}}))_{\mrm{tor}}$
since the prime-to-$p$ part of $E(\mbb{Q}_p(\mu_{p^{\infty}}))_{\mrm{tor}}$
is rational over $\mbb{Q}_p$ by the N\'eron-Ogg-Shafarevich criterion.

Finally we consider the case where 
$\bar{E}(\mbb{F}_p)[p]\not=0$ and 
$E(\mbb{Q}_p)[p]\not=0$.
Since $\hat{E}[p]\, (\simeq \mbb{F}_p(1))$ and 
$E(\mbb{Q}_p)[p]\, (\simeq \mbb{F}_p)$
are non-isomorphic $G_{\mbb{Q}_p}$-submodules
of $E[p]$, 
we have isomorphisms 
$$
E[p]=\hat{E}[p]\oplus E(\mbb{Q}_p)[p]
\simeq \mbb{F}_p(1)\oplus \mbb{F}_p
$$
of $G_{\mbb{Q}_p}$-submodules. 
In particular,
we have $E(\mbb{Q}_p(\mu_{p^{\infty}}))[p]=E[p]$.
Since the order of $E(\mbb{Q}_p(\mu_{p^{\infty}}))[p^{\infty}]$
is at most $p^2$ by Lemma \ref{p-bound},
we obtain $E(\mbb{Q}_p(\mu_{p^{\infty}}))[p^{\infty}]=E[p]$.
In particular, the definition field of $E(\mbb{Q}_p(\mu_{p^{\infty}}))[p^{\infty}]$
is $\mbb{Q}_p(\mu_p)$.
As we have seen above, it follows from 
the N\'eron-Ogg-Shafarevich criterion that the fields of definition of 
$E(\mbb{Q}_p(\mu_{p^{\infty}}))_{\mrm{tor}}$ is also $\mbb{Q}_p(\mu_p)$.
\end{proof}

In the proof above, we used the following lemma.
\begin{lemma}
\label{Z/p2Z}
Let $G$ be a group, $p$ a prime (including the case $p=2$) and $n>0$ an integer.
Let $M$ be a $\mbb{Z}/p^{2n}\mbb{Z}[G]$-module 
which is free of finite rank over $\mbb{Z}/p^{2n}\mbb{Z}$.
Then, we have a canonical isomorphism
$p^nM\simeq M/p^nM$
of $\mbb{Z}/p^n\mbb{Z}[G]$-modules.
\end{lemma}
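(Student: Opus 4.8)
The plan is to realize the desired isomorphism as multiplication by $p^n$. Write $R=\mbb{Z}/p^{2n}\mbb{Z}$, so that $M$ is a free $R$-module of finite rank carrying an $R$-linear action of $G$, and consider the map
$$
\theta\colon M\to M,\qquad x\mapsto p^nx.
$$
Since the $G$-action on $M$ is $R$-linear, in particular $\mbb{Z}$-linear, it commutes with multiplication by $p^n$, so $\theta$ is a homomorphism of $R[G]$-modules. Its image is exactly $p^nM$, and its kernel is the $p^n$-torsion submodule $M[p^n]$. The whole argument then rests on the single identity $\ker\theta=M[p^n]=p^nM$.

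First I would verify this identity using freeness. Fixing an $R$-basis yields an isomorphism $M\simeq R^d$ of $R$-modules for some $d\ge 0$, under which the claim reduces to the elementary fact that in $R=\mbb{Z}/p^{2n}\mbb{Z}$ one has $\{a\in R\mid p^na=0\}=p^nR$. This is precisely the step where the freeness hypothesis is indispensable: without it the identity fails (for instance $M=\mbb{Z}/p^n\mbb{Z}$, viewed as a non-free $R$-module, has $M[p^n]=M$ but $p^nM=0$). Granting the identity, the first isomorphism theorem gives an isomorphism of $R[G]$-modules
$$
\bar{\theta}\colon M/p^nM\xrightarrow{\ \sim\ }p^nM,\qquad x+p^nM\mapsto p^nx,
$$
since $\ker\theta=p^nM$ and the image of $\theta$ is $p^nM$.

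Finally I would observe that both $M/p^nM$ and $p^nM$ are annihilated by $p^n$, because $p^n\cdot p^nM=p^{2n}M=0$, so they are naturally $\mbb{Z}/p^n\mbb{Z}[G]$-modules and $\bar{\theta}$ is $\mbb{Z}/p^n\mbb{Z}[G]$-linear. The isomorphism is canonical because it is induced by multiplication by $p^n$, which is intrinsic to $M$ and independent of any choice of basis; the basis enters only in checking the set-theoretic equality $M[p^n]=p^nM$. The sole substantive point is this last equality, and once freeness over $R$ is in hand it is immediate, so I do not anticipate any genuine obstacle beyond keeping track of which maps are the canonical ones.
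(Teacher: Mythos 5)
Your proof is correct, and it ultimately constructs the very same isomorphism as the paper, but by a different elementary route. The paper's proof applies the snake lemma to the multiplication-by-$p^n$ endomorphism of the short exact sequence $0\to p^nM\to M\to M/p^nM\to 0$; the connecting homomorphism produced there is precisely your map $\bar{\theta}\colon x+p^nM\mapsto p^nx$. Where you invoke the first isomorphism theorem for the endomorphism $\theta=p^n\cdot(-)$ of $M$, the paper invokes the snake lemma; in both arguments the one substantive input is the identity $M[p^n]=p^nM$, which is exactly where freeness over $\mbb{Z}/p^{2n}\mbb{Z}$ enters (in the snake-lemma version it is needed to see that the map of kernels $\ker(p^n|_{p^nM})\to\ker(p^n|_M)$ is an isomorphism, which forces the connecting map to be injective; its surjectivity is automatic because the induced map $p^nM\to M/p^nM$ on cokernels vanishes). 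Your write-up has the merit of isolating this use of freeness explicitly, complete with the counterexample $M=\mbb{Z}/p^n\mbb{Z}$ showing the hypothesis is not idle, whereas the paper's ``immediately follows'' leaves that verification to the reader; what the paper's diagrammatic packaging buys is brevity and automatic $G$-equivariance bookkeeping, since the snake lemma is applied in the category of $G$-modules.
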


\begin{proof}
\if0
The result immediately follows by applying the snake lemma to
the commutative diagram 
obtained by the multiplication by $p^n$ 
 to the exact sequence $0\to p^nM\to M\to M/p^nM\to 0$
of $G$-modules.
\fi
The result immediately follows by applying the snake lemma to
the commutative diagram of $G$-modules below:
\[
\xymatrix{
0 \ar[r] 
& p^nM \ar[r] \ar^{p^n}[d]
& M \ar[r] \ar^{p^n}[d]
& M/p^nM \ar[r] \ar^{p^n}[d]
& 0 \\
0 \ar[r] 
& p^nM \ar[r] 
& M \ar[r]  
& M/p^nM \ar[r]
& 0. 
}
\]
\end{proof}

\begin{proof}[Proof of the first part of Theorem \ref{E(Qp(mu)):good} (2)]
The N\'eron-Ogg-Shafarevich criterion shows that 
the prime-to-$p$ parts of 
$E(\mbb{Q}_p(\mu_{p^{\infty}}))_{\mrm{tor}}$
and $E(\mbb{Q}_p(\mu_{p}))_{\mrm{tor}}$
coincide with each other.
Moreover, we have  
$E(\mbb{Q}_p(\mu_{p^{\infty}}))[p^{\infty}]
=E(\mbb{Q}_p(\mu_{p}))[p^{\infty}]$
by Proposition \ref{Main:cycl}. 
This finishes a proof.
\end{proof}

Let us prove the second part of Theorem \ref{E(Qp(mu)):good} (2).
The statement is equivalent to say that, for an elliptic curve $E$  over $\mbb{Q}_p$ 
with good ordinary reduction, then $E(\mbb{Q}_p(\mu_{p^{\infty}}))_{\mrm{tor}}$
is isomorphic to one of the following groups.
\begin{enumerate}
\item[{\rm (a)}] $G_{m,k}$,   $(m,k)\in I_{\mrm{ord}}$,
\item[{\rm (b)}] $G_{p,1}$,
\item[{\rm (c)}] $G_{p,2}$ with $p\le 5$,
\end{enumerate}
and each of these groups appears as $E(\mbb{Q}_p(\mu_{p^{\infty}}))_{\mrm{tor}}$ for 
some elliptic curve $E$ over $\mbb{Q}_p$ with good ordinary reduction.

\begin{lemma}
\label{cycl:lem}
Let $E$ be an elliptic curve  over $\mbb{Q}_p$ 
with good ordinary reduction.
\begin{itemize}
\item[{\rm (1)}] If $E(\mbb{Q}_p)[p]=0$,
then $E(\mbb{Q}_p(\mu_{p^{\infty}}))_{\mrm{tor}}\simeq \bar{E}(\mbb{F}_p)$
as abstract groups.
\item[{\rm (2)}] If $E(\mbb{Q}_p)[p]\not=0$,
then $E(\mbb{Q}_p(\mu_{p^{\infty}}))_{\mrm{tor}}\simeq \bar{E}(\mbb{F}_p)\times \mbb{Z}/p\mbb{Z}$
as abstract groups.
\end{itemize}

\end{lemma}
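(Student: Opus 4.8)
The plan is to combine the $p$-part computation from Proposition \ref{Main:cycl} with the prime-to-$p$ rationality coming from the N\'eron-Ogg-Shafarevich criterion, and then to assemble the abstract group structure. First I would recall that, by the first part of Theorem \ref{E(Qp(mu)):good} (2) (already proved), we have $E(\mbb{Q}_p(\mu_{p^{\infty}}))_{\mrm{tor}}=E(\mbb{Q}_p(\mu_p))_{\mrm{tor}}$, so the prime-to-$p$ part is governed by $E(\mbb{Q}_p)_{\mrm{tor}}$, which by \eqref{p'-part} is isomorphic to $\bar{E}(\mbb{F}_p)_{p'}$. The remaining task is to understand how the $p$-part glues to this prime-to-$p$ part.

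The key case distinction is exactly that of Proposition \ref{Main:cycl}, namely whether $\bar{E}(\mbb{F}_p)[p]=0$ or not, and whether $E(\mbb{Q}_p)[p]=0$ or not. For part (1), the hypothesis $E(\mbb{Q}_p)[p]=0$ means that in Proposition \ref{Main:cycl} we land in either the first case (when $\bar{E}(\mbb{F}_p)[p]=0$, giving $E(\mbb{Q}_p(\mu_{p^{\infty}}))[p^{\infty}]=0$) or the second case (when $\bar{E}(\mbb{F}_p)[p]\not=0$, giving $E(\mbb{Q}_p(\mu_{p^{\infty}}))[p^{\infty}]\simeq \mbb{Z}/p\mbb{Z}$). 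In the first situation, $\bar{E}(\mbb{F}_p)$ has trivial $p$-part, so $\bar{E}(\mbb{F}_p)=\bar{E}(\mbb{F}_p)_{p'}\simeq E(\mbb{Q}_p(\mu_{p^{\infty}}))_{\mrm{tor}}$ and the claim is immediate. In the second situation, $\bar{E}(\mbb{F}_p)[p]\simeq \mbb{Z}/p\mbb{Z}$ by the Hasse bound, so the single copy of $\mbb{Z}/p\mbb{Z}$ in the $p$-part matches the $p$-part of $\bar{E}(\mbb{F}_p)$; combining with the prime-to-$p$ identification gives the desired abstract isomorphism $E(\mbb{Q}_p(\mu_{p^{\infty}}))_{\mrm{tor}}\simeq \bar{E}(\mbb{F}_p)$.

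For part (2), the hypothesis $E(\mbb{Q}_p)[p]\not=0$ forces $\bar{E}(\mbb{F}_p)[p]\not=0$ (by the injection \eqref{p-part}), so we are in the third case of Proposition \ref{Main:cycl} and $E(\mbb{Q}_p(\mu_{p^{\infty}}))[p^{\infty}]\simeq \mbb{Z}/p\mbb{Z}\times \mbb{Z}/p\mbb{Z}$. By the Hasse bound, $\bar{E}(\mbb{F}_p)[p^{\infty}]=\bar{E}(\mbb{F}_p)[p]\simeq \mbb{Z}/p\mbb{Z}$, so the $p$-part of $E(\mbb{Q}_p(\mu_{p^{\infty}}))_{\mrm{tor}}$ is exactly one $\mbb{Z}/p\mbb{Z}$ larger than that of $\bar{E}(\mbb{F}_p)$. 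Writing $\bar{E}(\mbb{F}_p)\simeq \bar{E}(\mbb{F}_p)_{p'}\times \mbb{Z}/p\mbb{Z}$ and recombining with the prime-to-$p$ identification yields $E(\mbb{Q}_p(\mu_{p^{\infty}}))_{\mrm{tor}}\simeq \bar{E}(\mbb{F}_p)_{p'}\times \mbb{Z}/p\mbb{Z}\times \mbb{Z}/p\mbb{Z}\simeq \bar{E}(\mbb{F}_p)\times \mbb{Z}/p\mbb{Z}$, as claimed.

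The main obstacle here is not the $p$-part count, which Proposition \ref{Main:cycl} already supplies, but the bookkeeping needed to pass from the decomposition of $E(\mbb{Q}_p(\mu_{p^{\infty}}))_{\mrm{tor}}$ into its $p$-part and prime-to-$p$ part to a clean abstract-group statement — in particular verifying that the prime-to-$p$ part is genuinely isomorphic to $\bar{E}(\mbb{F}_p)_{p'}$ via the reduction map and that the Hasse bound pins down the $p$-primary part of $\bar{E}(\mbb{F}_p)$ to a single cyclic factor. The only subtlety to state carefully is that the isomorphism is asserted only as \emph{abstract groups}, since the Galois module structures of the two $\mbb{Z}/p\mbb{Z}$ factors (one being $\mbb{F}_p(1)$, the other $\mbb{F}_p$) differ; keeping the claim at the level of abstract groups sidesteps this and makes the recombination routine.
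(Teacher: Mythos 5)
Your proposal is correct and follows essentially the same route as the paper: Proposition \ref{Main:cycl} pins down the $p$-primary part in each case, the reduction map (via the N\'eron--Ogg--Shafarevich criterion and \eqref{p'-part}) identifies the prime-to-$p$ part with $\bar{E}(\mbb{F}_p)_{p'}$, and the Hasse bound forces $\bar{E}(\mbb{F}_p)[p^{\infty}]=\bar{E}(\mbb{F}_p)[p]$ to be a single cyclic factor, so the pieces reassemble exactly as you describe. Your extra detour through the already-proved first part of Theorem \ref{E(Qp(mu)):good} (2) is harmless (and not circular), and your closing remark about why the isomorphism is only one of abstract groups is consistent with the paper's formulation.
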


\begin{proof}
Assume that an elliptic curve $E$ over $\mbb{Q}_p$ 
has good ordinary reduction.
If $E(\mbb{Q}_p)[p]=0$ (resp.\ $E(\mbb{Q}_p)[p]\not=0$),
we have $E(\mbb{Q}_p(\mu_{p^{\infty}}))[p^{\infty}]
\simeq \bar{E}(\mbb{F}_p)[p^{\infty}]$
(resp.\ $E(\mbb{Q}_p(\mu_{p^{\infty}}))[p^{\infty}]
\simeq \bar{E}(\mbb{F}_p)[p^{\infty}]\times \mbb{Z}/p\mbb{Z}$)
by Proposition \ref{Main:cycl}.
Since the reduction map gives an isomorphism between the prime-to-$p$
parts of $E(\mbb{Q}_p(\mu_{p^{\infty}}))$ and $\bar{E}(\mbb{F}_p)$,
the result follows.
\end{proof}

\begin{proof}[Proof of the second part of Theorem \ref{E(Qp(mu)):good} (2)]
Suppose that an elliptic curve $E$ over $\mbb{Q}_p$ 
has good ordinary reduction. 
If $E(\mbb{Q}_p)[p]=0$,
then it follows from Lemma \ref{cycl:lem} (1) that  $E(\mbb{Q}_p(\mu_{p^{\infty}}))$
is isomorphic to $G_{m,k}$ for some  $(m,k)\in I_{\mrm{ord}}$.
If $E(\mbb{Q}_p)[p]\not=0$, 
then it follows from Lemma \ref{cycl:lem} (2) that  $E(\mbb{Q}_p(\mu_{p^{\infty}}))_{\mrm{tor}}$
is isomorphic to $G_{m,k}\times \mbb{Z}/p\mbb{Z}$ 
for some  $(m,k)\in I_{\mrm{ord}}$.
Note that we moreover have $p\mid k$ since 
$\bar{E}(\mbb{F}_p)[p]$ is not zero by \eqref{p-part}.
In this case we see  $(m,k)=(1,jp)$ with $j\in \{1,2\}$ (resp.\ $j=1$ )
for $p\le 5$ (resp.\ $p>5$),
 and then $E(\mbb{Q}_p(\mu_{p^{\infty}}))_{\mrm{tor}}$
is isomorphic to 
and $G_{m,k}\times \mbb{Z}/p\mbb{Z}\simeq G_{p,j}$. 
Therefore, we showed that  
$E(\mbb{Q}_p(\mu_{p^{\infty}}))_{\mrm{tor}}$
is isomorphic to one of the groups appearing  in (a), (b) or (c).

Conversely, let $G$ be a group appearing in (a), (b) or (c).
Suppose $G=G_{m,k}$ as in (a) and suppose in addition $p\nmid k$.
By Theorem \ref{E(Fp)}, there exists an ordinary elliptic curve
$\bar{E}$ over $\mbb{F}_p$ such that  $G\simeq \bar{E}(\mbb{F}_p)$.
Take any lift $E$ of $\bar{E}$ to $\mbb{Q}_p$.
By $p\nmid k$,  $\bar{E}(\mbb{F}_p)[p]$ is trivial, and thus  
we have $E(\mbb{Q}_p)[p]=0$ by \eqref{p-part}. 
By Lemma \ref{cycl:lem}, we have 
$E(\mbb{Q}_p(\mu_{p^{\infty}}))_{\mrm{tor}}\simeq G$. 
Next we suppose one of the following situations.
\begin{itemize}
\item $G=G_{m,k}$ as in (a) and suppose in addition $p\mid k$. In this case $G=G_{1,jp}$ for $j\in \{1,2\}$ (resp.\ $j=1$ )
for $p\le 5$ (resp.\ $p>5$).
\item $G=G_{p,j}\ (\simeq G_{1,jp}\times \mbb{Z}/p\mbb{Z})$ is as in (b) or (c).
\end{itemize}
By Theorem \ref{E(Fp)}, there exists an ordinary elliptic curve
$\bar{E}$ over $\mbb{F}_p$ such that  $G_{1,jp}\simeq \bar{E}(\mbb{F}_p)$.
By Lemma 3.1 and Lemma 3.2 of \cite{DaWe08},
there exist elliptic curves $E_1$ and $E_2$ over $\mbb{Q}_p$
whose reductions are $\bar{E}$ 
such that $E_1(\mbb{Q}_p)[p]=0$ and
$E_2(\mbb{Q}_p)[p]\not=0$.
(Note that the canonical lift of $\bar{E}$
satisfies the desired condition for $E_2$.)
It follows from Lemma \ref{cycl:lem} that 
$E_1(\mbb{Q}_p(\mu_{p^{\infty}}))\simeq G_{1,jp}$
and $E_2(\mbb{Q}_p(\mu_{p^{\infty}}))\simeq G_{p,j}$.
This finishes a proof.
\end{proof}

\subsection{The case $p=2$}\label{subsection:p=2}

We show Theorem \ref{E(Qp(mu)):good} (3)
and Theorem \ref{E(Q2(mu4)):good}. 
We begin with a proof of the second statement
of Theorem \ref{E(Qp(mu)):good} (3); 
it suffices to show that, 
for  an elliptic curve $E$ over $\mbb{Q}_2$
with good ordinary reduction, then $E(\mbb{Q}_2(\mu_{2^{\infty}}))_{\mrm{tor}}$
is isomorphic to one of the following groups.
\begin{enumerate}
\item[{\rm (a)}] $G_{1,k}$,   $k\in \{4,8 \}$,
\item[{\rm (b)}] $G_{2,k}$,   $k\in \{1,4\}$, 
\item[{\rm (c)}] $G_{4,1}$,
\end{enumerate}
and each of these groups appears as $E(\mbb{Q}_2(\mu_{2^{\infty}}))_{\mrm{tor}}$ for 
some elliptic curve $E$ over $\mbb{Q}_2$ with good ordinary reduction.

For our proof below,
we need Fontaine's results on 
ramification theory of finite flat commutative group schemes. 
We give a brief sketch here
(with restricting $2$-adic cases); 
see \cite{Ser68} and \cite{Fon85}
for more precise information.
Let $K$ be a $2$-adic field and 
$L/K$ be a (not necessarily finite) Galois extension.
For any non-negative real number $u\ge 0$,
let $\mrm{Gal}(L/K)^{(u)}$ be the $u$-th upper ramification subgroup of $\mrm{Gal}(L/K)$ in the sense of \cite{Fon85}.
\if0
We put 
$\mrm{Gal}(L/K)^{(u+)}
:=\overline{\cup_{v>u} \mrm{Gal}(L/K)^{(v)}}$, 
where the overline means the closure with respect to Krull topology.
Then, we see that 
\begin{itemize}
\item[--] $\mrm{Gal}(L/K)^{(0)}=\mrm{Gal}(L/K)$, 
\item[--] $\mrm{Gal}(L/K)^{(0+)}=\mrm{Gal}(L/K)^{(1)}$ 
is the inertia subgroup of 
$\mrm{Gal}(L/K)$ 
and 
\item[--] $\mrm{Gal}(L/K)^{(1+)}$
is the wild inertia subgroup of 
$\mrm{Gal}(L/K)$.
\end{itemize}
If there exists a real number $u$ for which 
$\mrm{Gal}(L/K)^{(u)}$ is trivial, 
then we define the infimum of all such $u$ to be  
the {\it maximal upper ramification break} of $L/K$,
and denote it by $u_{L/K}$.
By definition, we have 
\fi
For a finite Galois extension $L/K$, we define 
{\it the maximal upper ramification break} of $L/K$ defined by 
$u_{L/K}=\sup \{ u \in \mbb{R}\, |\, \mrm{Gal}(L/K)^{(u)} \not=1\}$. 
It is well-known that 
\begin{itemize}
\item[--] $L/K$ is unramified if and only if $u_{L/K}=0$, 
\item[--] $L/K$ is tamely ramified if and only if $u_{L/K}\le 1$,  
and 
\item[--] $L/K$ is wildly ramified if and only if $u_{L/K}> 1$.
\end{itemize}
For example, there exist 
7 quadratic extensions of $\mbb{Q}_2$,
and their maximal upper ramification breaks are given in the Table 
\ref{table:degree2}.
We set 
$G_K^{(u)}:=\mrm{Gal}(\overline{\mbb{Q}}_2/K)^{(u)}$.
\if0
and 
$G_K^{(u+)}:=
\mrm{Gal}(\overline{\mbb{Q}}_2/K)^{(u+)}$.
\fi
It is shown by Fontaine \cite[Section 2, Th\'eor\`em 1]{Fon85} 
that,
for any finite flat commutative group scheme 
$\mcal{G}$ over $K$
killed by $2^n$,
the group $G_K^{(u)}$ acts trivial on 
$\mcal{G}(\overline{K})$ for $u>e_K(n+1)$.
This is equivalent to say that, 
if we denote by $L/K$ the Galois extension corresponding to 
the kernel of the $G_K$-action on 
$\mcal{G}(\overline{K})$, 
then $\mrm{Gal}(L/K)^{(u)}$ is trivial for $u>e_K(n+1)$,
that is, $u_{L/K}\le e_K(n+1)$.
By applying the result for $E[2^n]$, 
we have $u_{L/\mbb{Q}_2} \leq n+1$ for $L=\mbb{Q}_2(E[2^n])$. 
Since $u_{\mbb{Q}_2(\mu_{2^n})/\mbb{Q}_2}=n$ for any integer $n>1$ 
(cf.\ \cite[Chap.\ IV, Sect.\ 4, Cor.]{Ser68}), 
we have 
\begin{equation}
\label{Fontaine:bound}
E(\mbb{Q}_2(\mu_{2^{\infty}}))[2^n]
=E(\mbb{Q}_2(\mu_{2^{n+1}}))[2^n] 
\end{equation}
\if0
Since 
$\mrm{Gal}(\mbb{Q}_2(\mu_{2^{\infty}})/\mbb{Q}_2)^{(n+1)}
=\mrm{Gal}(\mbb{Q}_2(\mu_{2^{\infty}})/\mbb{Q}_2)^{(n+)}=\mrm{Gal}(\mbb{Q}_2(\mu_{2^{\infty}})/\mbb{Q}_2(\mu_{2^{n}}))$
for any integer $n\ge 0$
(cf.\ \cite[Chap.\ IV, Sect.\ 2, Thm.\ 1]{Ser68}),
it follows from Fontaine's result on ramifiction bound above that 
\begin{equation}
\label{Fontaine:bound}
E(\mbb{Q}_2(\mu_{2^{\infty}}))[2^n]
=E(\mbb{Q}_2(\mu_{2^{\infty}})^{\mrm{Gal}(\mbb{Q}_2(\mu_{2^{\infty}})/\mbb{Q}_2)^{(n+2)}})[2^n]
=E(\mbb{Q}_2(\mu_{2^{n+1}}))[2^n]
\end{equation}
for any elliptic curve $E$ over $\mbb{Q}_2$
with good reduction.
\fi

Let us return to the proof of
the second statement of 
Theorem \ref{E(Qp(mu)):good} (3). 
We need the following lemma.

\begin{lemma}
\label{hat=bar}
For an elliptic curve $E$ over $\mbb{Q}_2$ with good ordinary 
reduction, we have $\hat{E} 
(\mbb{Q}_2(\mu_{2^{\infty}}))_{\mrm{tor}}\simeq \bar{E}(\mbb{F}_2)$ as 
abstract groups.
\end{lemma}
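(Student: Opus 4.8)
The plan is to realize $\hat{E}(\mbb{Q}_2(\mu_{2^{\infty}}))_{\mrm{tor}}$ as the $G_{\mbb{Q}_2(\mu_{2^{\infty}})}$-invariants of $\hat{E}[2^{\infty}]$ and to compute its order via the Lubin--Tate description of the Galois action, in the same spirit as Lemma \ref{p-bound} but extracting the \emph{exact} value instead of merely an upper bound. First I would record two structural facts. Since $\hat{E}$ is a formal group in residue characteristic $2$, the torsion of $\hat{E}$ over any algebraic extension is $2$-power torsion, so $\hat{E}(\mbb{Q}_2(\mu_{2^{\infty}}))_{\mrm{tor}}=\hat{E}(\mbb{Q}_2(\mu_{2^{\infty}}))[2^{\infty}]$; and since $T_2(\hat{E})$ is free of rank one over $\mbb{Z}_2$, we have $\hat{E}[2^{\infty}]\simeq\mbb{Q}_2/\mbb{Z}_2$ as an abstract group, so the group in question is cyclic. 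Thus it suffices to compute its order $2^m$ and to check that $\bar{E}(\mbb{F}_2)$ is cyclic of the same order.

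For the Galois action, write $f_E(T)=T^2-a_2(E)T+2$, let $\alpha$ be the non-unit root and $\beta=2/\alpha$ the unit root. Good ordinary reduction forces $a_2(E)$ to be odd, and the Hasse bound gives $a_2(E)\in\{\pm 1\}$; in particular the discriminant $a_2(E)^2-8=-7$ is a square in $\mbb{Z}_2$, so $\alpha,\beta\in\mbb{Q}_2$ and $\alpha$ is a uniformizer. By Lemma \ref{lem:red:def}, $G_{\mbb{Q}_2}$ acts on $T_2(\hat{E})$ through the Lubin--Tate character $\chi=\chi_{\alpha}$, hence $\hat{E}[2^{\infty}]\simeq\mbb{Q}_2/\mbb{Z}_2(\chi)$ and
\[
m=\min\{v_2(\chi(\sigma)-1)\mid \sigma\in G_{\mbb{Q}_2(\mu_{2^{\infty}})}\}.
\]

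The key step is to pin down the image of $G_{\mbb{Q}_2(\mu_{2^{\infty}})}$ in $G_{\mbb{Q}_2}^{\mrm{ab}}$ under $\chi$. Via local reciprocity this subgroup corresponds to the closure of $\langle 2\rangle$ in $\widehat{\mbb{Q}_2^{\times}}$: as $\mbb{Q}_2(\mu_{2^{\infty}})/\mbb{Q}_2$ is totally ramified, the uniformizer $2$ acts trivially on $\mu_{2^{\infty}}$ while a unit $u$ acts by $u^{-1}$, so the norm groups of the layers $\mbb{Q}_2(\mu_{2^n})/\mbb{Q}_2$ intersect in $\langle 2\rangle$. Since $\chi_{\alpha}$ sends its uniformizer $\alpha$ to $1$ and a unit $u$ to $u^{-1}$, and $2=\alpha\beta$, we get $\chi(\mrm{rec}(2))=\beta^{-1}$, whence $\chi(G_{\mbb{Q}_2(\mu_{2^{\infty}})})=\overline{\langle\beta^{-1}\rangle}=\overline{\langle\beta\rangle}$, the closed subgroup of $\mbb{Z}_2^{\times}$ generated by $\beta$. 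A short computation then gives $m=\min_{x\in\overline{\langle\beta\rangle}}v_2(x-1)=v_2(\beta-1)$. Finally I would translate this into $\#\bar{E}(\mbb{F}_2)$: from $\beta^2-a_2(E)\beta+2=0$ and $\#\bar{E}(\mbb{F}_2)=3-a_2(E)$ one obtains $(\beta-1)(\beta-a_2(E)+1)=-\#\bar{E}(\mbb{F}_2)$, and since $a_2(E)=\pm 1$ makes $\beta-a_2(E)+1$ a $2$-adic unit, this yields $v_2(\beta-1)=v_2(\#\bar{E}(\mbb{F}_2))$. Hence $\#\hat{E}(\mbb{Q}_2(\mu_{2^{\infty}}))_{\mrm{tor}}=2^m=\#\bar{E}(\mbb{F}_2)$; as good ordinary reduction forces $\bar{E}(\mbb{F}_2)$ to be cyclic (its geometric $2$-torsion is $\mbb{Z}/2\mbb{Z}$), both groups are cyclic of the same order and therefore isomorphic.

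I expect the main obstacle to be this exact determination of $\chi(G_{\mbb{Q}_2(\mu_{2^{\infty}})})$, i.e.\ upgrading the one-sided estimate used in Lemma \ref{p-bound} to an equality. This requires care with the local reciprocity normalizations, with the fact that the uniformizer attached to $\chi$ is $\alpha$ rather than $2$, and with the passage to the infinite extension $\mbb{Q}_2(\mu_{2^{\infty}})$ (so that one controls the whole closed subgroup $\overline{\langle\beta\rangle}$, not just $\beta$ itself). The arithmetic relating $v_2(\beta-1)$ to $\#\bar{E}(\mbb{F}_2)$ is then routine.
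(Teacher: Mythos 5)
Your proposal is correct and takes essentially the same route as the paper's proof: both use Lemma \ref{lem:red:def} to describe the $G_{\mbb{Q}_2}$-action on $T_2(\hat{E})$ via the Lubin--Tate character $\chi_{\alpha}$, identify the image of $G_{\mbb{Q}_2(\mu_{2^{\infty}})}$ in $G_{\mbb{Q}_2}^{\mrm{ab}}$ with the closure of $\langle 2\rangle$ under local reciprocity, compute $\chi(2)=\beta^{-1}$, and translate this into $\#\bar{E}(\mbb{F}_2)$ through the characteristic polynomial $T^2-a_2(E)T+2$. If anything, your determination of the exact valuation $v_2(\beta-1)=v_2(\#\bar{E}(\mbb{F}_2))$ (using that $\beta-a_2(E)+1$ is a $2$-adic unit) is slightly sharper than the paper's congruence $\beta^{-1}\equiv -a_2(E)\ \mrm{mod}\ 4$, which on its own only bounds the invariants from below in the case $\bar{E}(\mbb{F}_2)\simeq \mbb{Z}/4\mbb{Z}$.
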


\begin{proof}
First we note that 
$\bar{E}(\mbb{F}_2)$ contains 
the element of order $2$ since 
the Galois group $G_{\mbb{Q}_2}$ 
acts on $\bar{E}[2]\, (\simeq \mbb{Z}/2\mbb{Z})$ trivial.
The Hasse bound shows that 
$\bar{E}(\mbb{F}_2)$ is isomorphic to either $\mbb{Z}/2\mbb{Z}$ or $\mbb{Z}/4\mbb{Z}$.
Thus 
\begin{align*}
a_2(E)=
\left\{ \,
\begin{aligned}
& 1 & if & \ \bar{E}(\mbb{F}_2)\simeq \mbb{Z}/2\mbb{Z},\\
& -1 & if & \ \bar{E}(\mbb{F}_2)\simeq\mbb{Z}/4\mbb{Z}.
\end{aligned}
\right.
\end{align*}
Denote by $\alpha$ 
(resp.\ $\beta$)
the non-unit root (resp.\ unit root) 
of the equation $T^2-a_2(E)T+2=0$,
where $a_2(E)=1+2-\# \bar{E}(\mbb{F}_2)$. 
Then $G_{\mbb{Q}_2}$ acts on the Tate module $V_2(\hat{E})$ of $\hat{E}$ by 
the Lubin-Tate character $\chi$ 
associated with the uniformizer $\alpha$
by Lemma \ref{lem:red:def}.
By abuse of notation  
we also denote by 
$\chi\colon \mbb{Q}_2^{\times}\to \mbb{Q}_2^{\times}$
the composite of 
$\chi$ (considered as a character of $G^{\mrm{ab}}_{\mbb{Q}_2}$) 
and the local reciprocity map $\mbb{Q}_2^{\times }\to G^{\mrm{ab}}_{\mbb{Q}_2}$. Here, 
$G^{\mrm{ab}}_{\mbb{Q}_2}=\mrm{Gal}(\mbb{Q}_2^{\mrm{ab}}/\mbb{Q}_2(\mu_{2^{\infty}}))$ is the maximal abelian quotient of $G_{\mbb{Q}_2}$.
Then 
$\chi(2)=\chi(\alpha \beta) 
= \beta^{-1}\equiv -a_2(E)$ mod $4$.
Since the subgroup of $\mbb{Q}_2^{\times}$ corresponding to 
$\mrm{Gal}(\mbb{Q}_2^{\mrm{ab}}/\mbb{Q}_2(\mu_{2^{\infty}}))$ via the local reciprocity map
is the closure of the group generated by $2$,
we obtain that 
\begin{align*}
\left\{ \,
\begin{aligned}
& \mbox{$\chi\not\equiv 1$ mod $4$} & if & \ \bar{E}(\mbb{F}_2)\simeq \mbb{Z}/2\mbb{Z},\\
& \mbox{$\chi\equiv 1$ mod $4$}  & if & \ \bar{E}(\mbb{F}_2)\simeq\mbb{Z}/4\mbb{Z}
\end{aligned}
\right.
\end{align*}
on $G_{\mbb{Q}_2(\mu_{2^{\infty}})}$.
\if0
$\chi\not\equiv 1$ mod $4$ on $G_{\mbb{Q}_2(\mu_{2^{\infty}})}$
or $\chi\equiv 1$ mod $4$ on $G_{\mbb{Q}_2(\mu_{2^{\infty}})}$
if $\bar{E}(\mbb{F}_2)\simeq \mbb{Z}/2\mbb{Z}$ or $\bar{E}(\mbb{F}_2)\simeq\mbb{Z}/4\mbb{Z}$,
respectively.
\fi
Therefore, we see  
$\hat{E}(\mbb{Q}_p(\mu_{2^{\infty}}))[2^{\infty}]\simeq \bar{E}(\mbb{F}_2)
$
as abstract groups.
Since orders of torsion elements of $\hat{E}$ are power of $2$, 
we finish the proof of the lemma.
\end{proof}

\begin{proof}[Proof of the second statement of 
Theorem \ref{E(Qp(mu)):good} (3)]
Consider an exact sequence 
$$
0\to \hat{E}(\mbb{Q}_p(\mu_{2^{\infty}}))_{\mrm{tor}}\to 
E(\mbb{Q}_p(\mu_{2^{\infty}}))_{\mrm{tor}}
\to \bar{E}(\mbb{F}_2)
$$
of $G_{\mbb{Q}_2}$-modules.
It follows from Lemma \ref{hat=bar} that 
$\hat{E}(\mbb{Q}_p(\mu_{2^{\infty}}))_{\mrm{tor}}\simeq \bar{E}(\mbb{F}_2)$
as abstract groups,
and the orders of these groups 
are $2$ or $4$ by the Hasse bound. 
This shows that  $E(\mbb{Q}_2(\mu_{2^{\infty}}))_{\mrm{tor}}$ is isomorphic 
to one of the groups appearing in the following.
\begin{enumerate}
\item[{\rm (a)'}] $G_{1,k}$,   $k\in \{2,4,8,16 \}$,
\item[{\rm (b)'}] $G_{2,k}$,   $k\in \{1,2,4\}$,
\item[{\rm (c)}] $G_{4,1}$.
\end{enumerate}
We claim that $E(\mbb{Q}_2(\mu_{2^{\infty}}))_{\mrm{tor}}$ is 
not isomorphic to $G_{1,16}$. 
Assume $E(\mbb{Q}_2(\mu_{2^{\infty}}))_{\mrm{tor}}
\simeq \mbb{Z}/16\mbb{Z}$.
If this is the case, putting $M=E(\mbb{Q}_2(\mu_{2^{\infty}}))_{\mrm{tor}}$,
we have $4M=\hat{E}(\mbb{Q}_2(\mu_{2^{\infty}}))[2^{\infty}]$ and 
$M/4M=\bar{E}(\mbb{F}_2)$.
It follows from Lemma \ref{Z/p2Z} that we have 
an isomorphism $\hat{E}(\mbb{Q}_2(\mu_{2^{\infty}}))[2^{\infty}]\simeq \bar{E}(\mbb{F}_2)$
of $G_{\mbb{Q}_2}$-modules but this is a contradiction since 
$G_{\mbb{Q}_2}$ acts on $\hat{E}(\mbb{Q}_2(\mu_{2^{\infty}}))[2^{\infty}]=\hat{E}(\mbb{Q}_2(\mu_{2^{\infty}}))[4]\, 
(\simeq \mbb{Z}/4\mbb{Z})$ by the $2$-adic cyclotomic character modulo $4$.

By the claim above, 
$E(\mbb{Q}_2(\mu_{2^{\infty}}))_{\mrm{tor}}$ is killed by $2^3$.
By \eqref{Fontaine:bound}, we see that 
$E(\mbb{Q}_2(\mu_{2^{\infty}}))_{\mrm{tor}}
=E(\mbb{Q}_2(\mu_{16}))_{\mrm{tor}}$.
Since we have a descent from $\mbb{Q}_2(\mu_{2^{\infty}})$ 
to a (not so large) finite extension 
$\mbb{Q}_2(\mu_{16})$ of $\mbb{Q}_2$,
we can apply a computational approach; 
by MAGMA calculation with Algorithm \ref{alg:torsion}, 
we can check that 
some of the groups in (a)', (b)' or (c) above appears 
as $E(\mbb{Q}_p(\mu_{p^{\infty}}))_{\mrm{tor}}$
for some elliptic curve $E$ over $\mbb{Q}_2$ with good ordinary reduction:
\begin{itemize}
\item $E=$\href{https://www.lmfdb.org/EllipticCurve/Q/33/a/1}{33.a3} satisfies $E(\mbb{Q}_2(\mu_{2^{\infty}}))_{\mrm{tor}}\simeq G_{1,4}$.
\item $E=$\href{https://www.lmfdb.org/EllipticCurve/Q/15/a/1}{15.a5} satisfies $E(\mbb{Q}_2(\mu_{2^{\infty}}))_{\mrm{tor}}\simeq G_{1,8}$.
\item $E=$\href{https://www.lmfdb.org/EllipticCurve/Q/15/a/2}{33.a1} satisfies $E(\mbb{Q}_2(\mu_{2^{\infty}}))_{\mrm{tor}}\simeq G_{2,1}$.
\item $E=$\href{https://www.lmfdb.org/EllipticCurve/Q/15/a/2}{15.a2} satisfies $E(\mbb{Q}_2(\mu_{2^{\infty}}))_{\mrm{tor}}\simeq G_{2,4}$.
\item $E=$\href{https://www.lmfdb.org/EllipticCurve/Q/15/a/5}{15.a1} satisfies $E(\mbb{Q}_2(\mu_{2^{\infty}}))_{\mrm{tor}}\simeq G_{4,1}$.
\end{itemize}

For the proof of the theorem, it suffices to show that 
there is no elliptic curve $E$ over $\mbb{Q}_2$ with good ordinary reduction
such that  $E(\mbb{Q}_p(\mu_{p^{\infty}}))_{\mrm{tor}}$ is isomorphic 
to either $G_{1,2}$ or $G_{2,2}$.
In the rest of the proof, we denote by 
$\chi\colon G_{\mbb{Q}_2}\to \mbb{Z}_2^{\times}$ 
the crystalline character defined by the $G_{\mbb{Q}_p}$-action on 
the $p$-adic Tate module $T_p(\hat{E})$ of 
the formal group associated with $E$.
We also denote by $\psi\colon G_{\mbb{Q}_2}\to \mbb{Z}_2^{\times}$ 
the unramified character defined by the $G_{\mbb{Q}_p}$-action on 
the $p$-adic Tate module $T_p(\bar{E})$ of 
the reduction $\bar{E}$ of $E$.
The Weil pairing shows that $\chi \psi=\chi_2$ where $\chi_2$ 
is the $2$-adic cyclotomic character.
Thus we have $\chi \psi \ \mrm{mod}\ 2^n = 1 $ on $G_{\mbb{Q}_2(\mu_{2^n})}$
for each $n>0$.

\vspace{5mm}
{(I) Non-existence of $G_{1,2}$:} 
If $E(\mbb{Q}_2(\mu_{2^{\infty}}))_{\mrm{tor}}
\simeq \mbb{Z}/2\mbb{Z}$
for some elliptic curve $E$ over $\mbb{Q}_2$ with good reduction,
it follows from 
Fontaine's ramification bound 
\eqref{Fontaine:bound} that 
$
E(\mbb{Q}_2(\mu_{2^{\infty}}))_{\mrm{tor}}
=E(\mbb{Q}_2(\mu_4))_{\mrm{tor}}.
$
Hence, it suffices to show that 
$E(\mbb{Q}_2(\mu_{4}))_{\mrm{tor}}
\not\simeq \mbb{Z}/2\mbb{Z}$
for any elliptic curve $E$ over $\mbb{Q}_2$ with good ordinary reduction.

Assume that 
$E(\mbb{Q}_2(\mu_{4}))_{\mrm{tor}} \simeq \mbb{Z}/2\mbb{Z}$
for some elliptic curve $E$ over $\mbb{Q}_2$ with 
good ordinary reduction.
For a suitable choice of a $\mbb{Z}/4\mbb{Z}$-basis of $E[4]$,
the $G_{\mbb{Q}_2}$-action on $E[4]$ is given by 
$$
\rho_{E[4]}=
\begin{pmatrix}
\chi\ \mrm{mod}\ 4 & u \\
0 & \psi\ \mrm{mod}\ 4
\end{pmatrix}
\colon G_{\mbb{Q}_2}\to GL_2(\mbb{Z}/4\mbb{Z})
$$
for some map $u\colon G_{\mbb{Q}_2}\to \mbb{Z}/4\mbb{Z}$.

We claim that 
\begin{equation*}
[\mbb{Q}_2(E[4]):\mbb{Q}_2]=16.
\end{equation*}
Since $\chi \equiv \psi \ \mrm{mod}\ 4$ on $G_{\mbb{Q}_2(\mu_4)}$,
we may regard $H:=\mrm{Gal}(\mbb{Q}_2(E[4])/\mbb{Q}_2(\mu_4))$ 
as a subgroup of 
$$
G:=\left\{ 
\begin{pmatrix}
a & b \\
0 & a
\end{pmatrix}
\mid a\in (\mbb{Z}/4\mbb{Z})^{\times}, b\in \mbb{Z}/4\mbb{Z} 
\right\}
$$
via $\rho_{E[4]}$.
Since $E(\mbb{Q}_2(\mu_4))\not\supset E[2]$, we have 
$u \ \mrm{mod}\ 2 \not=0$ on $G_{\mbb{Q}_2(\mu_4)}$.
Thus $H$ contains  at least either 
$\begin{pmatrix}
1 & 1 \\
0 & 1
\end{pmatrix}$
or 
$\begin{pmatrix}
-1 & 1 \\
0 & -1
\end{pmatrix}$.
If we assume that  $H$ is generated by 
one of these matrices, 
we find that  $E(\mbb{Q}_2(\mu_4)))$ must contain an element of order $4$
but this is a contradiction.
Thus we have $H=G$.
Now the claim immediately follows.

By considering from the view point of ramification, 
we show below that 
$$
[\mbb{Q}_2(E[4]):\mbb{Q}_2]<16
$$ 
holds
(of course this is a contradiction).
Since $\psi$ is unramified, 
$\psi \ \mrm{mod}\ 4$ is trivial on $G_F$
where $F$ is the unramified quadratic extension field $F$ of $\mbb{Q}_2$.
In particular, we have $\chi\ \mrm{mod}\ 4 = \chi_2\ \mrm{mod}\ 4$ on $G_F$. 
Since $E(\mbb{Q}_2(\mu_{4}))$ does not contain $E[2]$,
we have $u\ \mrm{mod}\ 2\not=0$ on $G_{\mbb{Q}_2(\mu_{4})}$.
Thus the field $L$ corresponding to 
the kernel of $u\ \mrm{mod}\ 2\colon G_{\mbb{Q}_2}\to \mbb{Z}/2\mbb{Z}$
is a quadratic extension of $\mbb{Q}_2$.
Note that we have $L=\mbb{Q}_2(E[2])$ and  
\begin{equation}
\label{umod2}
u(G_L)\subset 2\cdot \mbb{Z}/4\mbb{Z}.
\end{equation}
By \cite[Section 2, Th\'eor\`em 1]{Fon85}, 
the maximal ramification break $u_{L/\mbb{Q}_2}$ of $L/\mbb{Q}_2$
is at most $2$. Hence there are three possibilities for $L$;
$L$ is isomorphic to either 
$L_1=\mbb{Q}_2[x]/(x^2+2x+2)\ (\simeq \mbb{Q}_2(\mu_4))$, 
$L_2=\mbb{Q}_2[x]/(x^2+2x+6)$ or 
$F$ (see Table \ref{table:degree2}).
Since $E(\mbb{Q}_2(\mu_{4}))$
does not contain $E[2]$, 
$L$ is isomorphic to either 
$L_2$ or 
$F$.
\begin{itemize}
\item[(i)] Suppose $L=F$. We have 
$$
\rho_{E[4]}=
\begin{pmatrix}
\chi\ \mrm{mod}\ 4 & u \\
0 & 1
\end{pmatrix}
$$
on $G_F$.
Since the order of $u(G_F)$ is at most $2$ by \eqref{umod2},
we see that $[\mbb{Q}_2(E[4]):F]\le 4$,
which shows $[\mbb{Q}_2(E[4]):\mbb{Q}_2]\le 8<16$ as desired.
\item[(ii)] Suppose $L=L_2$. 
In this case, the field $L(\mu_4)$ cotains $F$  
by Proposition \ref{composite} (1).
Thus we have 
$$
\rho_{E[4]}=
\begin{pmatrix}
1 & u \\
0 & 1
\end{pmatrix}
$$
on $G_{L(\mu_4)}$.
Since the order of $u(G_{L(\mu_4)})$ is at most $2$ by \eqref{umod2},
we see that $[\mbb{Q}_2(E[4]):L(\mu_4)]\le 2$,
which shows $[\mbb{Q}_2(E[4]):\mbb{Q}_2]\le 8<16$ as desired.
\end{itemize}
Therefore,
we finish the proof of (I).

\vspace{5mm}
{(II) Non-existence of $G_{2,2}$:} 
If $E(\mbb{Q}_2(\mu_{2^{\infty}}))_{\mrm{tor}}
\simeq  \mbb{Z}/2\mbb{Z}\times \mbb{Z}/4\mbb{Z}$
for some elliptic curve $E$ over $\mbb{Q}_2$ with good reduction,
it follows from 
Fontaine's ramification bound 
\eqref{Fontaine:bound} that 
$
E(\mbb{Q}_2(\mu_{2^{\infty}}))_{\mrm{tor}}
=E(\mbb{Q}_2(\mu_8))_{\mrm{tor}}.
$
Hence, it suffices to show that 
$E(\mbb{Q}_2(\mu_{8}))_{\mrm{tor}}
\not\simeq  \mbb{Z}/2\mbb{Z}\times \mbb{Z}/4\mbb{Z}$
for any elliptic curve $E$ over $\mbb{Q}_2$ with good ordinary reduction.

Assume that 
$E(\mbb{Q}_2(\mu_{8}))_{\mrm{tor}} \simeq  \mbb{Z}/2\mbb{Z}\times \mbb{Z}/4\mbb{Z}$
for some elliptic curve $E$ over $\mbb{Q}_2$ with 
good ordinary reduction.
For a suitable choice of a $\mbb{Z}/8\mbb{Z}$-basis of $E[8]$,
the $G_{\mbb{Q}_2}$-action on $E[8]$ is given by 
$$
\rho_{E[8]}=
\begin{pmatrix}
\chi\ \mrm{mod}\ 8 & u \\
0 & \psi\ \mrm{mod}\ 8
\end{pmatrix}
\colon G_{\mbb{Q}_2}\to GL_2(\mbb{Z}/8\mbb{Z})
$$
for some map $u\colon G_{\mbb{Q}_2}\to \mbb{Z}/8\mbb{Z}$.
Here we give some remarks on 
the character $\psi\ \mrm{mod}\ 8$ and the map $u$.
By Lemma \ref{hat=bar} and the assumption that $E(\mbb{Q}_2(\mu_{2^{\infty}}))$
is of order $\ge 8$, we have 
$\bar{E}(\mbb{F}_2)\simeq \mbb{Z}/4\mbb{Z}$. 
Thus  any element of $\bar{E}[4]$ is $\mbb{F}_2$-rational 
but 
some element of $\bar{E}[8]$ is not $\mbb{F}_2$-rational.
This gives 
\begin{equation}
\label{psi:relation}
\mbox{$\psi \ \mrm{mod}\ 4 =1$ on $G_{\mbb{Q}_2}$\quad   
and\quad  $\psi\ \mrm{mod}\ 8 \not=1$ on $G_{\mbb{Q}_2(\mu_8)}$.}
\end{equation}
In particular, $\psi\ \mrm{mod}\ 8\colon G_{\mbb{Q}_2}\to (\mbb{Z}/8\mbb{Z})^{\times}$
has values in $\{1,5\}$
and hence $\psi\ \mrm{mod}\ 8\colon G_{\mbb{Q}_2}\to \{1,5\}\, (\subset (\mbb{Z}/8\mbb{Z})^{\times})$
is the  surjective unramified character.
By 
$
E[2]\subset E(\mbb{Q}_2(\mu_8)))_{\mrm{tor}}, 
$
we see that  $u\ \mrm{mod}\ 2$ is trivial on $G_{\mbb{Q}_2(\mu_8)}$,
that is, $u(G_{\mbb{Q}_2(\mu_8)})\subset 2\cdot \mbb{Z}/8\mbb{Z}$.
Note that 
$\rho_{E[4]}=
\begin{pmatrix}
\chi_2 \ \mrm{mod}\ 4 & u\ \mrm{mod}\ 4\  \\
0 & 1
\end{pmatrix}$
on $G_{\mbb{Q}_2}$ by \eqref{psi:relation}.
Since $E(\mbb{Q}_2(\mu_{2^{\infty}}))$ does not contain $E[4]$, 
it holds 
\begin{equation}
\label{u:relation1}
\mbox{$u\ \mrm{mod}\ 4 \not=0$ on $G_{\mbb{Q}_2(\mu_8)}$.}
\end{equation}
On the other hand,
since $u\ \mrm{mod}\ 4$ on $G_{\mbb{Q}_2(E[2])}$ has values in 
$2\cdot \mbb{Z}/4\mbb{Z}$, 
it holds $\chi(\sigma)u(\sigma)\equiv u(\sigma)\ \mrm{mod}\ 4$
for any $\sigma \in G_{\mbb{Q}_2(E[2])}$.
This gives the fact that  $u\ \mrm{mod}\ 4$ on $G_{\mbb{Q}_2(E[2])}$ 
is a homomorphism with 
values in $2\cdot \mbb{Z}/4\mbb{Z}$.

We claim that 
$$
[\mbb{Q}_2(E[8]):\mbb{Q}_2]=32.
$$
\noindent
Since $\chi \equiv \psi \ \mrm{mod}\ 8$ on $G_{\mbb{Q}_2(\mu_8)}$, 
it follows from \eqref{psi:relation} and $u(G_{\mbb{Q}_2(\mu_8)})\subset 2\cdot \mbb{Z}/8\mbb{Z}$ 
that 
we may regard $H:=\mrm{Gal}(\mbb{Q}_2(E[8])/\mbb{Q}_2(\mu_8))$ 
as a subgroup of 
$$
G:=\left\{ 
\begin{pmatrix}
a & b \\
0 & a
\end{pmatrix}
\mid a\in \{1,5\}\subset (\mbb{Z}/8\mbb{Z})^{\times}, b\in 2 \cdot \mbb{Z}/8\mbb{Z} 
\right\}
$$
via $\rho_{E[8]}$.
By \eqref{u:relation1}, $H$ contains at least either 
$\begin{pmatrix}
1 & 2 \\
0 & 1
\end{pmatrix}$
or 
$\begin{pmatrix}
5 & 2 \\
0 & 5
\end{pmatrix}$.
If $H$ is generated by 
one of these matrices, 
we find that  $E(\mbb{Q}_2(\mu_8))$ must contain an element of order $8$
but this is a contradiction.
Thus we have $H=G$. Now the claim immediately follows.

As we have done in the case (I), by considering from the view point of ramification, 
we show below that 
$$
[\mbb{Q}_2(E[8]):\mbb{Q}_2]<32
$$ 
holds 
(of course this is a contradiction).
First we note that $\mbb{Q}_2(E[2])$ is a subfield of 
$\mbb{Q}_2(\mu_8)$ since $E(\mbb{Q}_2(\mu_8))$ contains $E[2]$.
By \eqref{u:relation1} and the fact that 
 $u\ \mrm{mod}\ 4$ on $G_{\mbb{Q}_2(E[2])}$ 
is a homomorphism with 
values in $2\cdot \mbb{Z}/4\mbb{Z}$,
we know that the homomorphism 
$u\ \mrm{mod}\ 4\colon G_{\mbb{Q}_2(E[2])}\to 2\cdot \mbb{Z}/4\mbb{Z}$ 
is surjective.
We denote by $L$ the quadratic extension of $\mbb{Q}_2(E[2])$
which corresponds to the kernel of this homomorphism.
By definition of $L$, $L$ is a quadratic extension of $\mbb{Q}_2(E[2])$
and we have 
\begin{equation}
\label{u:incl}
u(G_L)\subset 4\cdot \mbb{Z}/8\mbb{Z}.
\end{equation}
Since $L$ is a subfield of $\mbb{Q}_2(E[4])$, 
it follows from Fontaine's ramification bound \eqref{Fontaine:bound} that 
$u_{L/\mbb{Q}_2}\le 3$.
Furthermore, $L$ does not contained in $\mbb{Q}_2(\mu_8)$.
In fact, if $L$ is a subfield of  $\mbb{Q}_2(\mu_8)$, 
then $\rho_{E[4]}$ must be trivial on  $G_{\mbb{Q}_2(\mu_8)}$,
which shows $E(\mbb{Q}_2(\mu_8))$ contains $E[4]$ but this is a contradiction.
Since $\mbb{Q}_2(E[2])$ is now contained in $\mbb{Q}_2(\mu_8)$
and is of degree at most $2$ over $\mbb{Q}_2$,
it follows \eqref{Fontaine:bound} again that $\mbb{Q}_2(E[2])$ 
is either $\mbb{Q}_2$ or $\mbb{Q}_2(\mu_4)$
(see Table \ref{table:degree2}).
We make a case distinction depending on which of these two situations occurs.

(II-1) Suppose that $\mbb{Q}_2(E[2])=\mbb{Q}_2$. 
Since $\rho_{E[4]}=
\begin{pmatrix}
\chi_2 \ \mrm{mod}\ 4 & u\ \mrm{mod}\ 4\  \\
0 & 1
\end{pmatrix}$
on $G_{\mbb{Q}_2}$ by \eqref{psi:relation},
we have $\mbb{Q}_2(E[4])=L(\mu_4)$.
We recall that $L$ satisfies all the following properties:
\begin{enumerate}
\item[(a)] $L$ is a quadratic extension of $\mbb{Q}_2$,
\item[(b)] $u_{L/\mbb{Q}_2}\le 3$ and 
\item[(c)] $L$ does not contained in $\mbb{Q}_2(\mu_8)$.
\end{enumerate}
Note that $u_{L/\mbb{Q}_2}$ is not equal to one since $L/\mbb{Q}_2$ is 
either
unramified or wildly ramified.
\begin{itemize}
\item Suppose $u_{L/\mbb{Q}_2}=0$.
If this is the case, we have $L=F$ and 
$\rho_{E[8]}=\begin{pmatrix}
\chi_2 \ \mrm{mod}\ 8 & u  \\
0 & 1
\end{pmatrix}$
on $G_F$ by \eqref{psi:relation}.
Thus it follows from \eqref{u:incl} that we have 
$[\mbb{Q}_2(E[8]):F]\le 8$,
which shows $[\mbb{Q}_2(E[8]):\mbb{Q}_2]\le 16<32$
as desired.
\item Suppose $u_{L/\mbb{Q}_2}=2$.
By (a), (b) and (c) above, we find that 
$L$ is isomorphic to $L_2=\mbb{Q}_2[x]/(x^2+2x+6)$
(see Table \ref{table:degree2}).
In particular, $L(\mu_4)$ contains $F$ 
by Proposition \ref{composite} (1).
We have $\rho_{E[8]}=\begin{pmatrix}
\chi_2 \ \mrm{mod}\ 8 & u  \\
0 & 1
\end{pmatrix}$
on $G_{L(\mu_4)}$ by \eqref{psi:relation}.
Thus it follows from \eqref{u:incl} that
we have $[\mbb{Q}_2(E[8]):L(\mu_4)]\le 4$,
which shows $[\mbb{Q}_2(E[8]):\mbb{Q}_2]\le 16<32$
as desired.
\item Suppose $u_{L/\mbb{Q}_2}=3$.
There exist only $4$ possibility for such $L$.
Explicitly, $L$ is isomorphic to one of the following
(see Table \ref{table:degree2}).
\begin{align*}
& L_3=\mbb{Q}_2[x]/(x^2+2), 
&&L_4=\mbb{Q}_2[x]/(x^2+10), \\
&L_5=\mbb{Q}_2[x]/(x^2+4x+2), 
&&L_6=\mbb{Q}_2[x]/(x^2+4x+10). 
\end{align*}
For each of the fields, their composite with $F(\mu_4)$ contains $\mu_8$
by Proposition \ref{composite} (2).
This implies that  $LF(\mu_4)$ contains $\mu_8$.
Hence we have $\rho_{E[8]}=\begin{pmatrix}
1 & u\ \mrm{mod}\ 8\  \\
0 & 1
\end{pmatrix}$
on $G_{LF(\mu_4)}$ by \eqref{psi:relation}.
Thus it follows from \eqref{u:incl} that we have 
$[\mbb{Q}_2(E[8]):LF(\mu_4)]\le 2$,
which shows $[\mbb{Q}_2(E[8]):\mbb{Q}_2]\le 16<32$
as desired.
\end{itemize}

(II-2) Suppose that $\mbb{Q}_2(E[2])=\mbb{Q}_2(\mu_4)$.
Since $\rho_{E[4]}=
\begin{pmatrix}
1 & u\ \mrm{mod}\ 4\  \\
0 & 1
\end{pmatrix}$
on $G_{\mbb{Q}_2(\mu_4)}$ by \eqref{psi:relation},
we have $\mbb{Q}_2(E[4])=L$. 
In particular, $L$ is a Galois extension of $\mbb{Q}_2$.
Here we summarize properties of $L$.
\begin{enumerate}
\item[(a)] $L$ is a Galois extension over $\mbb{Q}_2$ of degree $4$
with $L\supset \mbb{Q}_2(\mu_4)$,
\item[(b)] $u_{L/\mbb{Q}_2}\le 3$ and 
\item[(c)] $L$ does not conatined in $\mbb{Q}_2(\mu_8)$.
\end{enumerate}
We claim that either $L=F(\mu_4)$
or $LF\supset \mbb{Q}_2(\mu_8)$.
If $L/\mbb{Q}_2(\mu_4)$ is unramified, then we have $L=F(\mu_4)$.
Suppose that $L/\mbb{Q}_2(\mu_4)$ is totally ramified.
Then $L$ is a totally ramified Galois extension over $\mbb{Q}_2$
of degree $4$.
By $\mbb{Q}_2(\mu_4)\subset L=\mbb{Q}_2(E[4])$, Fontiane's ramification bound implies 
$2\le u_{L/\mbb{Q}_2}\le 3$.
In fact, there does not exist\footnote{
This is easily checked as in the database of $p$-adic fields in 
LMFDB \cite{lmfdb}. 
}
a totally ramified Galois extension 
over $\mbb{Q}_2$ of degree $4$ 
with maximal ramification break $2$.
Thus we have $u_{L/\mbb{Q}_2}=3$.
There exist only $4$ possibilities of such $L$;
$L$ is isomorphic to one of the followings
(see Table \ref{table:degree4}).
\begin{align*}
& M_1=\mbb{Q}_2[x]/(x^4+2x^2+4x+2), 
&&M_2=\mbb{Q}_2[x]/(x^4+2x^2+4x+10), \\
&M_3=\mbb{Q}_2[x]/(x^4+4x^3+2x^2+4x+6), 
&&M_4=\mbb{Q}_2[x]/(x^4+4x^3+2x^2+4x+14). 
\end{align*}
In each case\footnote{In fact, since $L$ contains $\mu_4$ but does not contain $\mu_8$,
we obtain the fact that $L=M_2$.}, we know that $LF$ contains $\mu_8$
by Proposition \ref{composite} (3). Thus the claim follows.
\begin{itemize}
\item Suppose $L=F(\mu_4)$.
We have $\rho_{E[8]}=\begin{pmatrix}
1 & u  \\
0 & 1
\end{pmatrix}$
on $G_{F(\mu_8)}$ by \eqref{psi:relation}.
Thus it follows from \eqref{u:incl} that we have 
$[\mbb{Q}_2(E[8]):F(\mu_8)]\le 2$,
which shows $[\mbb{Q}_2(E[8]):\mbb{Q}_2]\le 16<32$
as desired.
\item Suppose $LF\supset \mbb{Q}_2(\mu_8)$.
We have $\rho_{E[8]}=\begin{pmatrix}
1 & u  \\
0 & 1
\end{pmatrix}$
on $G_{LF}$ by \eqref{psi:relation}.
Thus it follows from \eqref{u:incl} that we have 
$[\mbb{Q}_2(E[8]):LF]\le 2$,
which shows $[\mbb{Q}_2(E[8]):\mbb{Q}_2]\le 16<32$
as desired.
\end{itemize}
Therefore, we finish the proof of the theorem.
\end{proof}    

\begin{remark}
\label{remark:nonexistence}
As we have seen in the arguments of (I) and (II) above, it holds that  
$E(\mbb{Q}_2(\mu_4))_{\mrm{tor}}\not \simeq \mbb{Z}/2\mbb{Z}$
and 
$E(\mbb{Q}_2(\mu_8))_{\mrm{tor}}\not \simeq \mbb{Z}/2\mbb{Z}\times \mbb{Z}/4\mbb{Z}$
for any elliptic curve $E$ over $\mbb{Q}_2$ with good ordinary reduction.
\end{remark}

Next we show the first statement of Theorem \ref{E(Qp(mu)):good} (3). 
\if0
The method of the proof below is motivated by the argument above.
\fi

\begin{proof}[Proof of the first statement of 
Theorem \ref{E(Qp(mu)):good} (3)]
By Fontaine's ramification bound \eqref{Fontaine:bound},  
any element of $E(\mbb{Q}_2(\mu_{2^{\infty}}))_{\mrm{tor}}$ killed by $2^n$
is rational over $\mbb{Q}_2(\mu_{n+1})$
for any $n$.
Thus, by the second statement of Theorem \ref{E(Qp(mu)):good} (3),
which has already been proved, 
it suffices to show 
$E(\mbb{Q}_2(\mu_{2^{\infty}}))[2^{\infty}]
=E(\mbb{Q}_2(\mu_8))[2^{\infty}]$ 
if 
$E(\mbb{Q}_2(\mu_{2^{\infty}}))[2^{\infty}]$ is isomorphic to 
either $\mbb{Z}/8\mbb{Z}$ or $\mbb{Z}/2\mbb{Z}\times \mbb{Z}/8\mbb{Z}$.
If this is the case, 
by Lemma \ref{hat=bar} and the condition $\# E(\mbb{Q}_2(\mu_{2^{\infty}}))[2^{\infty}]>4$,
we have $\bar{E}(\mbb{F}_2)\simeq \mbb{Z}/4\mbb{Z}$.
Thus the $G_{\mbb{Q}_2}$-action on $E[4]$ is given by 
\begin{equation}
\label{E4}
\rho_{E[4]}=\begin{pmatrix}
\chi_2\ \mrm{mod}\ 4 & u  \\
0 & 1
\end{pmatrix}
\colon G_{\mbb{Q}_2}\to GL_2(\mbb{Z}/4\mbb{Z})
\end{equation}
for some map $u\colon G_{\mbb{Q}_2}\to \mbb{Z}/4\mbb{Z}$.

First we consider the case where 
$E(\mbb{Q}_2(\mu_{2^{\infty}}))[2^{\infty}]\simeq \mbb{Z}/8\mbb{Z}$. 
Since $E(\mbb{Q}_2(\mu_{2^{\infty}}))[2^{\infty}]$ does not contain $E[2]$,
it holds $u \ \mrm{mod}\ 2\not=0$ on $G_K$. 
In particular, 
$\rho_{E[4]}(G_{\mbb{Q}_2(\mu_4)})$
is generated by 
$
\begin{pmatrix}
1 & 1\\
0 & 1
\end{pmatrix}.
$
We assume that $\mbb{Q}_2(E[4])\supset \mbb{Q}_2(\mu_8)$.
If this is the case,  
$\rho_{E[4]}(G_{\mbb{Q}_2(\mu_8)})$
is generated by 
$
\begin{pmatrix}
1 & 2\\
0 & 1
\end{pmatrix}
$
since it is the subgroup of index $2$ in 
$\rho_{E[4]}(G_{\mbb{Q}_2(\mu_4)})$.
Thus we see that $E(\mbb{Q}_2(\mu_8))$ contains $\mbb{Z}/2\mbb{Z}\times \mbb{Z}/4\mbb{Z}$
but this contradicts the assumption $E(\mbb{Q}_2(\mu_{2^{\infty}}))[2^{\infty}]\simeq \mbb{Z}/8\mbb{Z}$. 
Thus we obtain $\mbb{Q}_2(E[4])\not\supset \mbb{Q}_2(\mu_8)$.
This is equivalent to say that  
$\mbb{Q}_2(E[4])\cap \mbb{Q}_2(\mu_{2^{\infty}})=
\mbb{Q}_2(\mu_4)$.
Now we claim\footnote{This claim follows immediately from 
Proposition 2.11 of \cite{BrKr01}. Our proof of the claim here  
follows their arguments.} 
$$
\mbb{Q}_2(E[8])\cap \mbb{Q}_2(\mu_{2^{\infty}})=
\mbb{Q}_2(\mu_8).
$$
Consider a homomorphism
$\mrm{Gal}(\mbb{Q}_2(E[8])/\mbb{Q}_2)
\to GL_2(\mbb{Z}/8\mbb{Z})$
coming from the $G_{\mbb{Q}_2}$-action on $E[8]$.
Since the restriction of this map 
to $\mrm{Gal}(\mbb{Q}_2(E[8])/\mbb{Q}_2(E[4]))$
has values in the kernel of the mod $4$ reduction 
$GL_2(\mbb{Z}/8\mbb{Z})\to GL_2(\mbb{Z}/4\mbb{Z})$,
it holds that 
the Galois group of the extension 
$\mbb{Q}_2(E[8])$ over $\mbb{Q}_2(E[4])$ 
is of exponent $2$.
Thus 
\if0
$\mrm{Gal}(\mbb{Q}_2(E[8])\cap \mbb{Q}_2(\mu_{2^{\infty}})/
\mbb{Q}_2(E[4])\cap \mbb{Q}_2(\mu_{2^{\infty}}))$,
which is isomorphic to 
a quotient $\mrm{Gal}(\mbb{Q}_2(E[4])(\mbb{Q}_2(E[8])\cap \mbb{Q}_2(\mu_{2^{\infty}}))/
\mbb{Q}_2(E[4]))$ 
of $\mrm{Gal}(\mbb{Q}_2(E[8])/\mbb{Q}_2(E[4]))$, 
is also of exponent $2$.
\fi
the Galois group of the extension 
$\mbb{Q}_2(E[8])\cap \mbb{Q}_2(\mu_{2^{\infty}})$
over $\mbb{Q}_2(E[4])\cap \mbb{Q}_2(\mu_{2^{\infty}})$
is also of exponent $2$.
Let $n\ge 3$ be an integer such that 
$\mbb{Q}_2(E[8])\cap \mbb{Q}_2(\mu_{2^{\infty}})=\mbb{Q}_2(\mu_{2^n})$.
By the condition $\mbb{Q}_2(E[4])\cap \mbb{Q}_2(\mu_{2^{\infty}})=
\mbb{Q}_2(\mu_4)$,
the Galois group of the extension 
$\mbb{Q}_2(E[8])\cap \mbb{Q}_2(\mu_{2^{\infty}})$ over 
$\mbb{Q}_2(E[4])\cap \mbb{Q}_2(\mu_{2^{\infty}}))$ is isomorphic to 
$\mbb{Z}/2^{n-2}\mbb{Z}$.
Since this is of exponent $2$,
we have $n=3$. Thus the claim is now proved. 
Since 
$E(\mbb{Q}_2(\mu_{2^{\infty}}))[2^{\infty}]$
is killed by $8$, it follows from the claim that we have 
$$
E(\mbb{Q}_2(\mu_{2^{\infty}}))[2^{\infty}]
=E(\mbb{Q}_2(E[8])\cap \mbb{Q}_2(\mu_{2^{\infty}}))[2^{\infty}]
=E(\mbb{Q}_2(\mu_8))[2^{\infty}]
$$ 
as desired.

Next we consider the case where 
$E(\mbb{Q}_2(\mu_{2^{\infty}}))[2^{\infty}]\simeq \mbb{Z}/2\mbb{Z}\times \mbb{Z}/8\mbb{Z}$.
Assume that $E(\mbb{Q}_2(\mu_{2^{\infty}}))[2^{\infty}]\not= 
E(\mbb{Q}_2(\mu_{8}))[2^{\infty}]$.
Since $\mbb{Q}_2(E[2])$ is a subfield of $\mbb{Q}_2(\mu_{2^{\infty}})$,
it follows that $\mbb{Q}_2(E[2])$ is either $\mbb{Q}_2$ or $\mbb{Q}_2(\mu_4)$.
In particular, we have $\mbb{Q}_2(E[2])\subset \mbb{Q}_2(\mu_8)$.
Hence 
$E(\mbb{Q}_2(\mu_{8}))[2^{\infty}]$ is isomorphic to either 
$\mbb{Z}/2\mbb{Z}\times \mbb{Z}/2\mbb{Z}$ or $\mbb{Z}/2\mbb{Z}\times \mbb{Z}/4\mbb{Z}$.
By Remark \ref{remark:nonexistence},
we have $E(\mbb{Q}_2(\mu_{8}))[2^{\infty}]\simeq \mbb{Z}/2\mbb{Z}\times \mbb{Z}/2\mbb{Z}$.
However, \eqref{E4} implies that $E(\mbb{Q}_2(\mu_{8}))[4]$ must contain 
some element of order just $4$. This is a contradiction.
Therefore, we conclude $E(\mbb{Q}_2(\mu_{2^{\infty}}))[2^{\infty}]= 
E(\mbb{Q}_2(\mu_{8}))[2^{\infty}]$ as desired.
\end{proof}

\begin{remark}
We should note that the field $\mbb{Q}_p(\mu_{p})$ 
(resp.\ $\mbb{Q}_2(\mu_8)$) in the first statement of Theorem \ref{E(Qp(mu)):good} (2)
(resp.\ Theorem \ref{E(Qp(mu)):good} (3))
is  ``the best possible" 
in the sense that, 
for each odd prime $p$ (resp.\ $p=2$), 
there exists an elliptic cureve $E$ over $\mbb{Q}_p$
with good ordinary reduction such that 
the definition field of 
$E(\mbb{Q}_p(\mu_{p^{\infty}}))_{\mrm{tor}}$
is 
$\mbb{Q}_p(\mu_p)$ (resp.\ $\mbb{Q}_2(\mu_8)$).
Examples for such situations are as follows.

\begin{itemize}
\item[--] Suppose $p\ge 3$ and take an elliptic curve $\bar{E}$
such that $\bar{E}(\mbb{F}_p)\not=0$ (such $\bar{E}$ exists for any $p$).
Let $E_{/\mbb{Q}_p}$ be the canonical lift of $\bar{E}$.
Then, $E[p]$ is isomorphic to $\mbb{F}_p(1)\oplus \mbb{F}_p$ as a $G_{\mbb{Q}_p}$-representation
(see the argument of Section \ref{Qp:ord:odd}),
where $G_{\mbb{Q}_p}$ is the absolute Galois group of $\mbb{Q}_p$.
Thus $\mbb{Q}_p(E[p])=\mbb{Q}_p(\mu_p)$. On the other hand,
the prime-to-$p$ part of  $E(\mbb{Q}_p(\mu_{p^{\infty}}))$ 
coincides with that of  $E(\mbb{Q}_p)$ by the N\'eron-Ogg -Shafarevich criterion.
Thus, the definition field of $E(\mbb{Q}_p(\mu_{p^{\infty}}))_{\mrm{tor}}$
is $\mbb{Q}_p(\mu_p)$.
\item[--] 
The definition field of 
$E(\mbb{Q}_2(\mu_{2^{\infty}}))_{\mrm{tor}}$
for $E=$\href{https://www.lmfdb.org/EllipticCurve/Q/15/a/4}{15.a7} 
is $\mbb{Q}_2(\mu_8)$.
\end{itemize}
\end{remark}

We end this section by proving Theorem \ref{E(Q2(mu4)):good}.

\begin{proof}[Proof of Theorem \ref{E(Q2(mu4)):good}]
Since the group $E(\mbb{Q}_2(\mu_4))_{\mrm{tor}}$ is a subgroup of 
$E(\mbb{Q}_2(\mu_{2^{\infty}}))_{\mrm{tor}}$,
it is isomorphic to a   subgroup 
of the groups listed in $(I)'_{\infty}$.
Since the kernel of the reduction map $E[2]\to \bar{E}[2]$
is rational over $\mbb{Q}_2$, we know that 
$E(\mbb{Q}_2)_{\mrm{tor}}$ 
(and also $E(\mbb{Q}_2(\mu_4))_{\mrm{tor}}$) are not zero.
Furthermore, by Remark \ref{remark:nonexistence},
we also have 
$E(\mbb{Q}_2(\mu_4))_{\mrm{tor}}\not \simeq \mbb{Z}/2\mbb{Z}$.
On the other hand, 
for each group $G_{i,j}$ in $(I)'_2$, MAGMA computations with Algorithm \ref{alg:torsion} 
give examples 
of $E$ such that $E(\mbb{Q}_2(\mu_4))_{\mrm{tor}}\simeq G_{i,j}$ as follows.
\begin{itemize}
\item 
$E=$\href{https://www.lmfdb.org/EllipticCurve/Q/15/a/1}{15.a1} satisfies $E(\mbb{Q}_2(\mu_4))_{\mrm{tor}}\simeq G_{1,4}$,
\item 
$E=$\href{https://www.lmfdb.org/EllipticCurve/Q/33/a/2}{33.a2} satisfies $E(\mbb{Q}_2(\mu_4))_{\mrm{tor}}\simeq G_{2,1}$,
\item 
$E=$\href{https://www.lmfdb.org/EllipticCurve/Q/15/a/2}{15.a2} satisfies $E(\mbb{Q}_2(\mu_4))_{\mrm{tor}}\simeq G_{2,2}$,
\item 
$E=$\href{https://www.lmfdb.org/EllipticCurve/Q/15/a/8}{15.a8} satisfies $E(\mbb{Q}_2(\mu_4))_{\mrm{tor}}\simeq G_{2,4}$,
\item 
$E=$\href{https://www.lmfdb.org/EllipticCurve/Q/15/a/5}{15.a5} satisfies $E(\mbb{Q}_2(\mu_4))_{\mrm{tor}}\simeq G_{4,1}$.
\end{itemize}
Therefore, to finish the proof of the theorem,
it suffices to show that 
$E(\mbb{Q}_2(\mu_4))_{\mrm{tor}}$ is not isomorphic to 
$G_{1,8}=\mbb{Z}/8\mbb{Z}$.
Assume $E(\mbb{Q}_2(\mu_4))_{\mrm{tor}}\simeq \mbb{Z}/8\mbb{Z}$.
By Theorem \ref{E(Qp(mu)):good} (3), 
$E(\mbb{Q}_2(\mu_8))_{\mrm{tor}}$
is isomorphic to $\mbb{Z}/8\mbb{Z}$ or $\mbb{Z}/2\mbb{Z}\times \mbb{Z}/8\mbb{Z}$.
By assumption, we have $E(\mbb{Q}_2(\mu_4))_{\mrm{tor}}\not \supset E[2]$
and thus $\mbb{Q}_2(E[2])$ is not contained in $\mbb{Q}_2(\mu_4)$.
Since the extension $\mbb{Q}_2(E[2])/\mbb{Q}_2$ is of degree at most 
$2$ and $u_{\mbb{Q}_2(E[2])/\mbb{Q}_2}\le 2$,
it holds that 
$\mbb{Q}_2(E[2])$ is $F$ or $L_2$
(see Table \ref{table:degree2}).
In both cases, $\mbb{Q}_2(E[2])$ is not a subfield of $\mbb{Q}_2(\mu_8)$.
Thus $E(\mbb{Q}_2(\mu_8))_{\mrm{tor}}$ does not contain $E[2]$.
Therefore, we obtain 
$$
E(\mbb{Q}_2(\mu_4))_{\mrm{tor}}=E(\mbb{Q}_2(\mu_8))_{\mrm{tor}}\simeq \mbb{Z}/8\mbb{Z}.
$$

Let $\chi\colon G_{\mbb{Q}_2}\to \mbb{Z}_2^{\times}$
and $\psi\colon G_{\mbb{Q}_2}\to \mbb{Z}_2^{\times}$
be the crystalline characters obtained by 
the $G_{\mbb{Q}_2}$-action on the $2$-adic Tate modules $T_2(\hat{E})$ and $T_2(\bar{E})$,
respectively.
Consider the $G_{\mbb{Q}_2}$-action on $E[8]$.
Identifying $E[8]=\mbb{Z}/8\mbb{Z}\times \mbb{Z}/8\mbb{Z}$ 
by a suitable choice of $\mbb{Z}/8\mbb{Z}$-basis of $E[8]$, 
 the $G_{\mbb{Q}_2}$-action on $E[8]$ is given by a continuous homomorphism 
$\rho_{E[8]}\colon G_{\mbb{Q}_2}\to GL_2(\mbb{Z}/8\mbb{Z})$
with the matrix form 
\begin{equation}
\label{E8}
\rho_{E[8]}=\begin{pmatrix}
\chi\ \mrm{mod}\ 8 & u  \\
0 & \psi \ \mrm{mod}\ 8
\end{pmatrix}
\end{equation}
for some map $u\colon G_{\mbb{Q}_2}\to \mbb{Z}/8\mbb{Z}$.
Let us study subgroups 
$\rho_{E[8]}(G_{\mbb{Q}_2(\mu_8)})$
and 
$\rho_{E[8]}(G_{\mbb{Q}_2(\mu_4)})$
of $GL_2(\mbb{Z}/8\mbb{Z})$.
Since we have $\bar{E}(\mbb{F}_2)\simeq \mbb{Z}/4\mbb{Z}$
by Lemma \ref{hat=bar}, 
the image of $\psi \ \mrm{mod}\ 8$ on $G_{\mbb{Q}_2}$ 
is $\{1,5\} \subset (\mbb{Z}/8\mbb{Z})^{\times}$.
In particular,
$\psi \ \mrm{mod}\ 8$ is an unramified character with kernel $G_F$.
Since $\chi \psi \ \mrm{mod}\ 8$ is trivial on $G_{\mbb{Q}_2(\mu_8)}$, 
we see $\chi \equiv \psi\ \mrm{mod}\ 8$ on $G_{\mbb{Q}_2(\mu_8)}$.
We find that 
$\rho_{E[8]}(G_{\mbb{Q}_2(\mu_8)})$ is a subgroup of  
$$
H:=\left\{ 
\begin{pmatrix}
a & b \\
0 & a 
\end{pmatrix}
\mid a=1,5\in \mbb{Z}/8\mbb{Z}\  \mbox{and}\ b\in \mbb{Z}/8\mbb{Z}
\right\}
$$
The order of the group $H$ is 16.
We note that $u \ \mrm{mod} \ 2$ is not trivial on $G_{\mbb{Q}_2(\mu_8)}$
since $E(\mbb{Q}_2(\mu_8))_{\mrm{tor}}$ does not contain $E[2]$.
Thus there exist $a\in \mbb{Z}/8\mbb{Z}$ 
and an odd $b\in \mbb{Z}/8\mbb{Z}$ 
such that $\begin{pmatrix}
a & b \\
0 & a 
\end{pmatrix}\in \rho_{E[8]}(G_{\mbb{Q}_2(\mu_8)})$.
Since the group generated by such a matrix 
$\begin{pmatrix}
a & b \\
0 & a 
\end{pmatrix}$
is either the group $H_1$ generated by
$\begin{pmatrix}
1 & 1 \\
0 & 1 
\end{pmatrix}$  or
the group $H_2$ generated by 
$\begin{pmatrix}
5 & 1 \\
0 & 5 
\end{pmatrix}$, 
we find that 
$\rho_{E[8]}(G_{\mbb{Q}_2(\mu_8)})$
is either 
$H_1$,\ $H_2$ or $H$.
By the facts that  
$\psi \ \mrm{mod} \ 8$ is not trivial on $G_{\mbb{Q}_2(\mu_8)}$
and $E(\mbb{Q}_2(\mu_8))_{\mrm{tor}}\simeq \mbb{Z}/8\mbb{Z}$, 
we obtain that $\rho_{E[8]}(G_{\mbb{Q}_2(\mu_8)})$
is not equal to $H_1$ and $H$, that is, 
\begin{equation}
\label{rho8mu8}
\rho_{E[8]}(G_{\mbb{Q}_2(\mu_8)})
=H_2=\langle \begin{pmatrix}
5 & 1 \\
0 & 5 
\end{pmatrix}\rangle
\end{equation}
and 
$E(\mbb{Q}_2(\mu_8))[2^{\infty}]
=\langle \begin{pmatrix}
1  \\
4  
\end{pmatrix}\rangle.
$
Next we consider $\rho_{E[8]}(G_{\mbb{Q}_2(\mu_4)})$.
We find that 
$\rho_{E[8]}(G_{\mbb{Q}_2(\mu_4)})$ is a subgroup of  
$$
G:=\left\{ 
\begin{pmatrix}
a & b \\
0 & c 
\end{pmatrix}
\mid a,c=1,5\in \mbb{Z}/8\mbb{Z}\  \mbox{and}\ b\in \mbb{Z}/8\mbb{Z}
\right\}
$$
by \eqref{E8} and the fact that $\psi$ and $\chi_2$ on $G_{\mbb{Q}_2(\mu_4)}$ 
have values in $\{1,5\}\subset (\mbb{Z}/8\mbb{Z})^{\times}$.
The order of the group $G$ is 32.
We see that 
$\rho_{E[8]}(G_{\mbb{Q}_2(\mu_8)})$
is a normal subgroup of $G$, the quotient 
$G/\rho_{E[8]}(G_{\mbb{Q}_2(\mu_8)})$
is isomorphic to $\mbb{Z}/2\mbb{Z}\times \mbb{Z}/2\mbb{Z}$
and $G/\rho_{E[8]}(G_{\mbb{Q}_2(\mu_8)})$
is generated by the classes with respect to 
$\begin{pmatrix}
5 & 0\\
0 & 1
\end{pmatrix}$
and 
$\begin{pmatrix}
1 & 0\\
0 & 5
\end{pmatrix}$.
Since 
the quotient $\rho_{E[8]}(G_{\mbb{Q}_2(\mu_4)})/\rho_{E[8]}(G_{\mbb{Q}_2(\mu_8)})$
is a subgroup of $G/\rho_{E[8]}(G_{\mbb{Q}_2(\mu_8)})$
of order $2$,
we find that $\rho_{E[8]}(G_{\mbb{Q}_2(\mu_4)})$
is one of the following groups; 
$$
\langle 
\begin{pmatrix}
5 & 0 \\
0 & 1 
\end{pmatrix},
\begin{pmatrix}
5 & 1 \\
0 & 5 
\end{pmatrix}\rangle,\quad 
\langle 
\begin{pmatrix}
1 & 0 \\
0 & 5 
\end{pmatrix},
\begin{pmatrix}
5 & 1 \\
0 & 5 
\end{pmatrix}\rangle,\quad 
\langle 
\begin{pmatrix}
5 & 0 \\
0 & 5 
\end{pmatrix}, 
\begin{pmatrix}
5 & 1 \\
0 & 5 
\end{pmatrix}\rangle.
$$
Since we now have 
$E(\mbb{Q}_2(\mu_4))[2^{\infty}]
=E(\mbb{Q}_2(\mu_8))[2^{\infty}]
=\langle \begin{pmatrix}
1  \\
4  
\end{pmatrix}\rangle$,
we find 
\begin{equation}
\label{rho8mu4}
\rho_{E[8]}(G_{\mbb{Q}_2(\mu_4)})
=\langle \begin{pmatrix}
1 & 0 \\
0 & 5 
\end{pmatrix},
\begin{pmatrix}
5 & 1 \\
0 & 5 
\end{pmatrix}\rangle.
\end{equation}
It follows from \eqref{rho8mu8} and  \eqref{rho8mu4}
that there exists 
$\sigma_0\in G_{\mbb{Q}_2(\mu_4)}
\smallsetminus G_{\mbb{Q}_2(\mu_8)}$ 
such that 
$\rho_{E[8]}(\sigma_0)
=\begin{pmatrix}
1 & 0 \\
0 & 5 
\end{pmatrix}$.
Since $\psi(\sigma_0)\equiv 5$ mod $8$, 
we have 
\begin{equation}
\label{sigma0}
\sigma_0\not\in G_F.
\end{equation}
On the other hand, as we have already checked, 
the field $\mbb{Q}_2(E[2])$
is either $F$ or $L_2$ (see Table \ref{table:degree2}).
In each case, 
$\mbb{Q}_2(E[2],\mu_4)$ contains $F$
by Proposition \ref{composite} (1).
Since $\rho_{E[8]}(\sigma_0)$ mod $2$ is trivial, 
we have $\sigma_0\in G_{\mbb{Q}_2(E[2])}$,
which gives 
$\sigma_0\in G_{\mbb{Q}_2(E[2])}\cap G_{\mbb{Q}_2(\mu_4)}
\subset G_F$.
This contradicts \eqref{sigma0}.
Therefore, we conclude $E(\mbb{Q}_2(\mu_4))\not\simeq \mbb{Z}/8\mbb{Z}$
as desired and this finishes the proof.
\end{proof}


\appendix

\section{Appendix : Data and algorithm}
\label{Data}

In this section, we present the data obtained by using the computer algebra system MAGMA \cite{BoCa06}.  

\subsection{Quadratic and quartic extensions of $\mbb{Q}_2$}

In this subsection, we give the tables of quadratic and 
certain quartic extensions of $\mbb{Q}_2$ for 
using to our proof in subsection \ref{subsection:p=2}. 
They can be easily checked by LMFDB database \cite{lmfdb}.  
Tabel \ref{table:degree2} shows the defining polynomials for 
all the quadratic extensions of $\mbb{Q}_2$. 
Tabel \ref{table:degree4} shows the defining polynomials for 
all the quartic extensions $L/\mbb{Q}_2$ with 
$u_{L/\mbb{Q}_2}=3$.
In these tables, the integer $f$ presents the inertia degree, $e$ the ramification index, 
$u_{L/\mbb{Q}_2}$ the maximal upper ramification break and $\mu_n$ the roots of unity which 
are included in $L$. 

\begin{table}[H]
\begin{center}
\begin{tabular}{|c|c|c|c|c|c|} \hline 
     $L$ & Polynomial & $f$ & $e$ & $u_{L/\mbb{Q}_2}$ & $\mu_n$ \\ \hline
  $F$ & $x^2+x+1$ & $2$ & $1$ & $0$ & $\mu_6$ \\ \hline
  $L_1$ & $x^2+2x+2$ & $1$ & $2$ & $2$ & $\mu_4$ \\ \hline
  $L_2$ & $x^2+2x+6$ & $1$ & $2$ & $2$ & $\mu_2$ \\ \hline
  $L_3$ & $x^2+2$ & $1$ & $2$ & $3$ & $\mu_2$ \\ \hline
  $L_4$ & $x^2+10$ & $1$ & $2$ & $3$ & $\mu_2$ \\ \hline
  $L_5$ & $x^2+4x+2$ & $1$ & $2$ & $3$ & $\mu_2$ \\ \hline
  $L_6$ & $x^2+4x+10$ & $1$ & $2$ & $3$ & $\mu_2$ \\ \hline
\end{tabular}
\end{center}
\caption{The quadratic extensions of $\mbb{Q}_2$}\label{table:degree2}
\end{table}

\begin{table}[H]
\begin{center}
\begin{tabular}{|c|c|c|c|c|c|} \hline 
 $L$ & Polynomial & $f$ & $e$ & $u_{L/\mbb{Q}_2}$ & $\mu_n$  \\ \hline
  $M_1$ & $x^4+2x^2+4x+2$ & $1$ & $4$ & $3$ & $\mu_8$ \\ \hline
  $M_2$ & $x^4+2x^2+4x+10$ & $1$ & $4$ & $3$ & $\mu_4$ \\ \hline
  $M_3$ & $x^4+4x^3+2x^2+4x+6$ & $1$ & $4$ & $3$ & $\mu_2$ \\ \hline
  $M_4$ & $x^4+4x^3+2x^2+4x+14$ & $1$ & $4$ & $3$ & $\mu_2$ \\ \hline
\end{tabular}
\end{center}
\caption{The quartic Galois extensions of $\mbb{Q}_2$ with $u_{L/\mbb{Q}_2}=3$}
\label{table:degree4}
\end{table}

We can easily check the equalities of the composite fields by using MAGMA as follows:

\begin{proposition}
\label{composite}
In Table \ref{table:degree2} and \ref{table:degree4}, we have the equalities:
    \begin{itemize}    
    \item[\rm{(1)}] $L_1 L_2 = F(\mu_4)$. 

    \item[\rm{(2)}] $L_3F(\mu_4)=L_4F(\mu_4)=L_5F(\mu_4)=L_6F(\mu_4)=F(\mu_8)$.
    
    \item[\rm{(3)}] $FM_1=FM_2=FM_3=F M_4=F(\mu_8)$. 
    \end{itemize}
\end{proposition}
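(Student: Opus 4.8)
The plan is to reinterpret every field in the statement as a \emph{multiquadratic} subextension of the maximal elementary $2$-abelian extension $\mbb{Q}_2(\sqrt{-1},\sqrt{2},\sqrt{5})$ of $\mbb{Q}_2$, and to translate the three claimed equalities of compositum fields into equalities of $\mbb{F}_2$-subspaces of $\mbb{Q}_2^{\times}/(\mbb{Q}_2^{\times})^2$. Recall that $\mbb{Q}_2^{\times}/(\mbb{Q}_2^{\times})^2\cong(\mbb{Z}/2\mbb{Z})^3$ with basis the square classes $[-1],[2],[5]$, that the seven quadratic extensions of $\mbb{Q}_2$ correspond bijectively to the seven nontrivial classes, and that a compositum $\mbb{Q}_2(\sqrt{a},\sqrt{b})$ corresponds to the subgroup $\langle[a],[b]\rangle$; under this dictionary equality of multiquadratic fields is equality of the associated subgroups.

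First I would identify each quadratic field of Table~\ref{table:degree2} with its class by completing the square: $L_1=\mbb{Q}_2(\sqrt{-1})\,(=\mbb{Q}_2(\mu_4))$, $L_2=\mbb{Q}_2(\sqrt{-5})$, $L_3=\mbb{Q}_2(\sqrt{-2})$, $L_4=\mbb{Q}_2(\sqrt{-10})$, $L_5=\mbb{Q}_2(\sqrt{2})$, and $L_6=\mbb{Q}_2(\sqrt{-6})=\mbb{Q}_2(\sqrt{10})$, while $F=\mbb{Q}_2(\sqrt{-3})=\mbb{Q}_2(\sqrt{5})=\mbb{Q}_2(\mu_3)$ is the unramified one (here I use $-3\equiv 5$ and $-6\equiv 10$ modulo squares). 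With these identifications, (1) and (2) are pure linear algebra. For (1), $L_1L_2\leftrightarrow\langle[-1],[-5]\rangle=\{1,[-1],[-5],[5]\}$ equals $F(\mu_4)\leftrightarrow\langle[5],[-1]\rangle=\{1,[5],[-1],[-5]\}$. For (2), note $F(\mu_8)=\mbb{Q}_2(\sqrt{5},\sqrt{-1},\sqrt{2})$ corresponds to the \emph{whole} group $(\mbb{Z}/2\mbb{Z})^3$; since each of $[-2],[-10],[2],[10]$ lies outside $\langle[5],[-1]\rangle$, adjoining any one of $L_3,L_4,L_5,L_6$ to $F(\mu_4)$ already produces a subgroup of order $8$, hence all of $(\mbb{Z}/2\mbb{Z})^3$, so each such compositum equals $F(\mu_8)$.

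For (3) the field $M_1$ is immediate: it has degree $4$ and contains $\mu_8$, so $M_1=\mbb{Q}_2(\mu_8)$ and $FM_1=F(\mu_8)$. For $M_2,M_3,M_4$ I would first record that there are exactly four totally ramified biquadratic extensions of $\mbb{Q}_2$ with maximal upper ramification break $3$, namely $\mbb{Q}_2(\sqrt{-1},\sqrt{2})$, $\mbb{Q}_2(\sqrt{-1},\sqrt{10})$, $\mbb{Q}_2(\sqrt{-5},\sqrt{2})$ and $\mbb{Q}_2(\sqrt{-2},\sqrt{-5})$ (these are the four $2$-dimensional subspaces of $(\mbb{Z}/2\mbb{Z})^3$ avoiding the unramified class $[5]$, and in each case the three quadratic subfields are among the $L_i$, whose breaks in Table~\ref{table:degree2} force the maximum to be $3$). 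Matching these to $M_1,\dots,M_4$ via the invariant $\mu_n$ of Table~\ref{table:degree4} identifies $M_2$ with $\mbb{Q}_2(\sqrt{-1},\sqrt{10})$ and $\{M_3,M_4\}$ with $\{\mbb{Q}_2(\sqrt{-5},\sqrt{2}),\mbb{Q}_2(\sqrt{-2},\sqrt{-5})\}$. In particular each $M_i$ is multiquadratic, hence contained in $\mbb{Q}_2(\sqrt{-1},\sqrt{2},\sqrt{5})=F(\mu_8)$; since $M_i$ is totally ramified while $F$ is unramified, $F\cap M_i=\mbb{Q}_2$ and so $[FM_i:\mbb{Q}_2]=8=[F(\mu_8):\mbb{Q}_2]$, forcing $FM_i=F(\mu_8)$. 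Concretely, adjoining $[5]$ to $\langle[-1],[10]\rangle$, $\langle[-5],[2]\rangle$, $\langle[-2],[-5]\rangle$ yields $(\mbb{Z}/2\mbb{Z})^3$ in every case.

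The main obstacle is the identification of the \emph{quartic} fields $M_2,M_3,M_4$ with explicit multiquadratic fields: unlike the quadratic case one cannot simply complete the square on the given defining polynomial to read off the two square classes, since the linear term $4x$ shows that the displayed generator is not of the form $\sqrt{a}+\sqrt{b}$. One must instead determine the quadratic subfields of each $M_i$ directly, or invoke the break-$3$ classification together with the $\mu_n$-matching above. This is precisely the step that the authors settle by the MAGMA verification; everything else reduces to the elementary $\mbb{F}_2$-linear algebra of square classes.
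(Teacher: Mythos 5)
Your proposal is correct, and it proves the proposition by a genuinely different route from the paper: the paper's own proof is nothing more than the remark that the equalities ``can be easily checked by using MAGMA,'' i.e.\ a pure machine verification, whereas you reduce everything to Kummer theory for $\mbb{Q}_2^{\times}/(\mbb{Q}_2^{\times})^2\cong(\mbb{Z}/2\mbb{Z})^3$. Your square-class identifications are all accurate ($L_1=\mbb{Q}_2(\sqrt{-1})$, $L_2=\mbb{Q}_2(\sqrt{-5})$, $L_3=\mbb{Q}_2(\sqrt{-2})$, $L_4=\mbb{Q}_2(\sqrt{-10})$, $L_5=\mbb{Q}_2(\sqrt{2})$, $L_6=\mbb{Q}_2(\sqrt{10})$, $F=\mbb{Q}_2(\sqrt{5})$, using $-15\equiv 1 \bmod 8$), so parts (1) and (2) become $\mbb{F}_2$-linear algebra with no computer input at all, and the structural reason for all the equalities becomes visible: each compositum is forced to be the maximal multiquadratic extension $\mbb{Q}_2(\sqrt{-1},\sqrt{2},\sqrt{5})$ of degree $8$. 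For part (3) your argument is sound, but its one logical dependence should be made explicit: identifying $\{M_1,\dots,M_4\}$ with the four totally ramified biquadratic fields uses not only your (correct) enumeration of the four $2$-dimensional subspaces of square classes avoiding $[5]$, together with the breaks from Table \ref{table:degree2}, but also the completeness assertion of Table \ref{table:degree4} --- that the $M_i$ exhaust the quartic Galois extensions of $\mbb{Q}_2$ with break $3$ --- since a priori one of the $M_i$ could have been cyclic of degree $4$, in which case $FM_i$ would be a degree-$8$ field different from $F(\mu_8)$ (every subfield of $F(\mu_8)$ is multiquadratic, its Galois group being $(\mbb{Z}/2\mbb{Z})^3$). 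That completeness is precisely the datum the paper takes from the LMFDB, so your proof trades the paper's direct computation of composita for a smaller, already-tabulated computational input plus a transparent hand calculation; you flag this residual dependence yourself, which is the right assessment.
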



\subsection{Algorithm for computing $\# E(K)[n]$}
In this subsection, we give an algorithm for calculating the order $\# E(K)[n]$ 
for given $K$, $E$ and $n$, where $K$ is a finite extension of $\mathbb{Q}_p$, 
$E$ is an elliptic curve over $K$ and $n>1$ is an integer. 
In the case where $K$ is an algebraic number filed, 
we can compute the torsion subgroup $E(K)_{\mrm{tor}}$ by the 
intrinsic function 
$\texttt{TorsionSubgroup}(E)$ which is implemented in MAGMA\footnote{
The version of MAGMA used in this paper is v2.27-5.
}. 
However, this intrinsic function is not valid for a $p$-adic field $K$. 
In spite of such a function, we use the several functions that 
they are implemented in MAGMA and valid even for a $p$-adic field. 
First, the function $\texttt{DivisionPolynomial}(E,n)$ gives a 
polynomial whose roots are the $x$-coordinates of the points of $E(K)[n]$ 
for a $p$-adic field $K$, 
an elliptic curve $E$ and an integer $n>1$. 
Second, the function $\texttt{Roots}(f)$ gives the roots of a given 
polynomial $f$ over $K$. 
Finally, the function $\texttt{Points}(E(K),x_0)$ gives 
the points of $E$ whose $x$-coordinate equals $x_0$.
By combining these functions, we can compute 
the order $\# E(K)[n]$ as follows: 

\begin{algorithm}[H]
	\label{alg1}
	\begin{algorithmic}
	\REQUIRE $n>1$, $K$, $E$
    \ENSURE $t=\# E(K)[n]$
    \STATE $t \leftarrow 1$
    \STATE $f \leftarrow \texttt{DivisionPolynomial}(E,n)$
    \STATE $R \leftarrow \texttt{Roots}(f,K)$
    \STATE $r \leftarrow \# R$
    \IF{$r \not= 0$}
    \FOR{$x \in R$}
    \STATE $t \leftarrow t+\# \texttt{Points}(E(K),x)$
    \ENDFOR
    \ENDIF
    \RETURN $t$
	\end{algorithmic}\caption{Calculate $\# E(K)[n]$}\label{alg:torsion}
\end{algorithm}

\subsection{List of torsion subgroups}

In this subsection, we give a list of possible candidates that they actually occur 
for torsion subgroups of $E(\mbb{Q}_p)$ and $E(\mbb{Q}_p(\mu_{p^{\infty}}))$. 
In each list, we use Cremona's database of elliptic curves. 
Hence the label of each elliptic curve is presented as Cremona label, which is 
different from the one used in LMFDB.
We use Algorithm \ref{alg:torsion} which can compute $\# E(K)[n]$ 
for given $K$, $E$ and $n>1$. 
As in the proof of Theorem \ref{E(Qp(mu)):good}, 
it is enough to compute 
$E(\mbb{Q}_p(\mu_p))$ (resp. $E(\mbb{Q}_2(\mu_{16})))$ instead of computing 
$E(\mbb{Q}_p(\mu_{p^{\infty}}))$ for an odd prime $p$ (resp. for $p=2)$\footnote{
In case $p=2$, we actually have $E(\mbb{Q}_2(\mu_{2^{\infty}}))_{\mrm{tor}}
=E(\mbb{Q}_2(\mu_8))_{\mrm{tor}}$, as stated in Theorem \ref{E(Qp(mu)):good}. 
}.

\begin{table}[H]
\begin{center}
\begin{tabular}{|c|c|} \hline
   $E(\mbb{Q}_2)$ & Label \\ \hline 
   $G_{1,2}$ & 15a5 \\ 
   $G_{1,4}$ & 15a7 \\ 
   $G_{1,8}$ & 15a4 \\ 
   $G_{2,1}$ & 15a2 \\ 
   $G_{2,2}$ & 15a1 \\ 
   & \\ 
   & \\
   & \\
   & \\
   & \\
   & \\
   & \\
   & \\
   & \\
   \hline
\end{tabular} \hspace{3pt}
\begin{tabular}{|c|c|} \hline
   $E(\mbb{Q}_3)$ & Label \\ \hline 
   $G_{1,1}$ & 26a2 \\ 
   $G_{1,2}$ & 14a3 \\ 
   $G_{1,3}$ & 26a1 \\ 
   $G_{1,5}$ & 11a1 \\ 
   $G_{1,6}$ & 14a1 \\ 
   & \\ 
   & \\
   & \\
   & \\
   & \\
   & \\
   & \\
   & \\
   & \\
   \hline
\end{tabular} \hspace{3pt}
\begin{tabular}{|c|c|} \hline
   $E(\mbb{Q}_5)$ & Label \\ \hline 
   $G_{1,1}$ & 11a2 \\
   $G_{1,2}$ & 38b2 \\ 
   $G_{1,3}$ & 19a1 \\ 
   $G_{1,4}$ & 39a2 \\ 
   $G_{1,5}$ & 11a1 \\ 
   $G_{1,7}$ & 26b1 \\ 
   $G_{1,8}$ & 17a3 \\
   $G_{1,9}$ & 26a1 \\
   $G_{1,10}$ & 38b1 \\
   $G_{2,1}$ & 39a1 \\
   $G_{2,2}$ & 17a1 \\ 
   & \\
   & \\ 
   & \\ \hline
\end{tabular} \hspace{3pt}
\begin{tabular}{|c|c|} \hline
   $E(\mbb{Q}_7)$ & Label \\ \hline 
   $G_{1,1}$ & 26b2 \\
   $G_{1,3}$ & 104a1 \\ 
   $G_{1,4}$ & 17a1 \\ 
   $G_{1,5}$ & 38b1 \\ 
   $G_{1,6}$ & 20a1 \\ 
   $G_{1,7}$ & 26b1 \\ 
   $G_{1,9}$ & 19a2 \\
   $G_{1,10}$ & 11a1 \\
   $G_{1,11}$ & 75a1 \\
   $G_{1,12}$ & 30a1 \\
   $G_{1,13}$ & 57a1 \\
   $G_{2,1}$ & 17a2 \\ 
   $G_{2,3}$ & 30a2 \\
   $G_{3,1}$ & 19a1 \\ \hline
\end{tabular}
\caption{Examples of $E(\mbb{Q}_p)_{\mrm{tor}}$ 
with good ordinary reduction}\label{table:ord}
\end{center}
\end{table}

\begin{table}[H]
\begin{center}
\begin{tabular}{|c|c|} \hline
   $E(\mbb{Q}_2)$ & Label \\ \hline
   $G_{1,1}$ & 67a1 \\
   $G_{1,3}$ & 19a1 \\
   $G_{1,5}$ & 11a1 \\
   & \\ \hline
\end{tabular} \hspace{3pt}
\begin{tabular}{|c|c|} \hline
   $E(\mbb{Q}_3)$ & Label \\ \hline
   $G_{1,1}$ & 140b1 \\ 
   $G_{1,4}$ & 17a1 \\
   $G_{2,1}$ & 17a2 \\
   $G_{1,7}$ & 26b1 \\ \hline
\end{tabular} \hspace{3pt}
\begin{tabular}{|c|c|} \hline
   $E(\mbb{Q}_5)$ & Label \\ \hline
   $G_{1,6}$ & 14a1 \\ 
    &  \\
    & \\
    & \\ \hline
\end{tabular} \hspace{3pt}
\begin{tabular}{|c|c|} \hline
   $E(\mbb{Q}_7)$ & Label \\ \hline
   $G_{1,8}$ & 15a4 \\ 
   $G_{2,2}$ & 15a1 \\
    & \\
    & \\ \hline
\end{tabular}
\caption{Examples of $E(\mbb{Q}_p)_{\mrm{tor}}$ 
with good supersingular reduction}\label{table:ss}
\end{center}
\end{table}

\begin{table}[H]
\begin{center}
\begin{tabular}{|c|c|} \hline
   $E(\mbb{Q}_2(\mu_{2^{\infty}}))$ & Label \\ \hline
   $G_{1,4}$ & 33a3 \\
   $G_{1,8}$ & 15a5 \\
   $G_{2,1}$ & 33a1 \\
   $G_{2,4}$ & 15a2 \\
   $G_{4,1}$ & 15a1 \\ 
   & \\ \hline 
   $E(\mbb{Q}_3(\mu_{3^{\infty}}))$ & Label \\ \hline
   $G_{1,2}$ & 56b1 \\
   $G_{1,3}$ & 26a2 \\ 
   $G_{1,5}$ & 11a1 \\
   $G_{1,6}$ & 14a3 \\
   $G_{3,1}$ & 26a1 \\
   $G_{3,2}$ & 14a1 \\ 
   & \\ \hline
\end{tabular} 
\begin{tabular}{|c|c|} \hline
   $E(\mbb{Q}_5(\mu_{5^{\infty}}))$ & Label \\ \hline
   $G_{1,2}$ & 46a1 \\
   $G_{1,3}$ & 19a1 \\ 
   $G_{1,4}$ & 39a2 \\
   $G_{1,5}$ & 11a2 \\
   $G_{1,7}$ & 26b1 \\
   $G_{1,8}$ & 17a3 \\ 
   $G_{1,9}$ & 26a1 \\
   $G_{1,10}$ & 38b2 \\
   $G_{2,1}$ & 39a1 \\
   $G_{2,2}$ & 17a1 \\
   $G_{5,1}$ & 11a1 \\
   $G_{5,2}$ & 38b1 \\
   & \\
   & \\
   \hline
\end{tabular} 
\begin{tabular}{|c|c|} \hline
   $E(\mbb{Q}_7(\mu_{7^{\infty}}))$ & Label \\ \hline 
   $G_{1,3}$ & 104a1 \\ 
   $G_{1,4}$ & 17a1 \\ 
   $G_{1,5}$ & 38b1 \\ 
   $G_{1,6}$ & 20a1 \\ 
   $G_{1,7}$ & 26b2 \\ 
   $G_{1,9}$ & 19a2 \\
   $G_{1,10}$ & 11a1 \\
   $G_{1,11}$ & 75a1 \\
   $G_{1,12}$ & 30a1 \\
   $G_{1,13}$ & 57a1 \\
   $G_{2,1}$ & 17a2 \\ 
   $G_{2,3}$ & 30a2 \\
   $G_{3,1}$ & 19a1 \\ 
   $G_{7,1}$ & 26b1 \\
   \hline
\end{tabular}  
\caption{Examples of $E(\mbb{Q}_p(\mu_{p^{\infty}}))_{\mrm{tor}}$ 
with good ordinary reduction}\label{table:cyclo:ord}
\end{center}
\end{table}
\begin{table}[H]
\begin{center}
\begin{tabular}{|c|c|} \hline
   $E(\mbb{Q}_2(\mu_4))$ & Label \\ \hline
   $G_{1,4}$ & 15a5 \\
   $G_{2,1}$ & 33a1 \\
   $G_{2,2}$ & 15a2 \\
   $G_{2,4}$ & 15a4 \\
   $G_{4,1}$ & 15a1 \\ \hline 
\end{tabular} 
\caption{Examples of $E(\mbb{Q}_2(\mu_4))_{\mrm{tor}}$ 
with good ordinary reduction}\label{table:cyclo:p=2}
\end{center}
\end{table}
\begin{table}[H]
\begin{center}
\begin{tabular}{|c|c|} \hline
   $E(\mbb{Q}_2(\mu_{2^{\infty}}))$ & Label \\ \hline
   $G_{1,1}$ & 67a1 \\
   $G_{1,3}$ & 19a1 \\ 
   $G_{1,5}$ & 11a1 \\ 
   & \\ \hline
   $E(\mbb{Q}_3(\mu_{3^{\infty}}))$ & Label \\ \hline
   $G_{1,1}$ & 140b1 \\
   $G_{1,4}$ & 17a1 \\
   $G_{2,1}$ & 17a2 \\
   $G_{1,7}$ & 26b1 \\ \hline
\end{tabular}
\begin{tabular}{|c|c|} \hline
   $E(\mbb{Q}_5(\mu_{5^{\infty}}))$ & Label \\ \hline
   $G_{1,6}$ & 14a1 \\
    & \\
    & \\
    & \\ \hline
   $E(\mbb{Q}_7(\mu_{7^{\infty}}))$ & Label \\ \hline
   $G_{1,8}$ & 15a4 \\
   $G_{2,2}$ & 15a1 \\
    & \\
    & \\ \hline
\end{tabular}
\caption{Examples of $E(\mbb{Q}_p(\mu_{p^{\infty}}))_{\mrm{tor}}$ 
with good supersingular reduction}\label{table:cyclo:ss}
\end{center}
\end{table}

\bibliographystyle{alpha}
\bibliography{references}

\if0

\fi

\end{document}